\documentclass[11pt]{article}
\usepackage[a4paper,margin=27mm]{geometry}
\usepackage{amsmath,amssymb,amsthm,mathtools}
\usepackage{ytableau}
\usepackage[table]{xcolor}
\usepackage{microtype}
\usepackage{needspace}
\usepackage{booktabs,array,tabularx}
\usepackage{tikz}
\usetikzlibrary{arrows.meta,calc,positioning}
\usepackage{float}
\usepackage{placeins}
\usepackage[hidelinks]{hyperref}

\hypersetup{
  pdftitle={Crystal operators and a sign-reversing involution on semistandard set-valued tableaux of skew shape},
  pdfauthor={Tatsushi Shimazaki}
}

\numberwithin{equation}{section}
\newtheorem{theorem}{Theorem}[section]
\newtheorem{proposition}[theorem]{Proposition}
\newtheorem{lemma}[theorem]{Lemma}
\newtheorem{corollary}[theorem]{Corollary}
\theoremstyle{definition}
\newtheorem{definition}[theorem]{Definition}
\newtheorem{example}[theorem]{Example}
\theoremstyle{plain}

\newcommand{\SST}{\operatorname{SST}}
\newcommand{\SVT}{\operatorname{SVT}}
\newcommand{\DFV}{\operatorname{DFV}}
\newcommand{\wt}{\operatorname{wt}}
\newcommand{\ex}{\operatorname{ex}}
\newcolumntype{Y}{>{\centering\arraybackslash}X}
\definecolor{mutedblue}{RGB}{92,132,176}
\definecolor{mutedred}{RGB}{186,112,122}
\definecolor{mutedgreen}{RGB}{108,150,104}
\definecolor{mutedpurple}{RGB}{126,95,188}
\definecolor{pathgray}{RGB}{85,85,85}
\definecolor{faintgray}{RGB}{220,220,220}
\definecolor{softgray}{RGB}{205,205,205}
\newcommand{\localvertex}[2]{%
\begin{tikzpicture}[baseline=-0.6ex,x=0.40cm,y=0.40cm,line width=0.7pt,>=Stealth]
  \draw[softgray] (-1.30,0)--(1.30,0);
  \draw[softgray] (0,-1.30)--(0,1.30);
  #1
  #2
\end{tikzpicture}}

\ytableausetup{mathmode,boxsize=1.55em,centertableaux}

\begin{document}

\begin{center}
{\Large\bfseries Crystal operators and a sign-reversing involution\\
on semistandard set-valued tableaux of skew shape}\par
\vspace{1mm}
{\large Tatsushi Shimazaki\footnotemark}\par
\end{center}
\footnotetext{Liberal Arts, National Institute of Technology, Akashi College, Akashi, Hyogo 674-8501, Japan.\\
E-mail: \href{mailto:t.shimazaki@akashi.ac.jp}{t.shimazaki@akashi.ac.jp}.\\
\textit{MSC 2020:} Primary 05E10; Secondary 05E05, 17B37.\\
\textit{Keywords:} set-valued tableaux; crystal operators; sign-reversing involution; Boolean lattice; five-vertex states.}
\vspace{3mm}

\begin{abstract}
We prove commutation relations between the ordinary type~A crystal operators and a sign-reversing involution on semistandard set-valued tableaux of skew shape. We describe the involution by Boolean fibers with fixed maximum entries. It commutes with every raising operator except the one indexed by the entry changed by the involution. We characterize commutation at this exceptional raising operator and for lowering operators away from the two colors involving that entry using the box where the involution acts and reduced signatures. We construct a bijection from these tableaux to decorated five-vertex states that preserves weight and excess, subject to a restriction on nontrivial bumps determined by the inner partition. Rotation with alphabet reversal yields the dual statements.
\end{abstract}

\section{Introduction}
\label{sec:introduction}

Semistandard set-valued tableaux were introduced by Buch and yield tableau formulas for stable Grothendieck polynomials \cite{Buch2002}. Combinatorial relations between skew Schur and skew stable Grothendieck polynomials are studied in \cite{ChanPflueger2021}. Related tableau models for Grothendieck polynomials are compared in \cite{Hawkes2024}. Grothendieck polynomials were introduced by Lascoux and Sch\"utzenberger in connection with the $K$-theory of flag varieties \cite{LS1982}. Their Grassmannian specializations were studied combinatorially by Lenart \cite{Lenart2000}. Factorial Grothendieck polynomials are treated in \cite{McNamara2006}.

A sign-reversing involution on semistandard set-valued tableaux of skew shape was defined in \cite{FNSOddness}. Crystal structures for canonical Grothendieck functions are studied in \cite{HawkesScrimshaw2020}. Bender--Knuth-type involutions in $K$-theoretic tableau combinatorics are used in \cite{IkedaShimazaki2014} to prove $K$-theoretic Littlewood--Richardson rules. Crystal bases were introduced by Kashiwara \cite{Kashiwara1990,Kashiwara1991}, and tableau crystal models for classical types were developed by Kashiwara and Nakashima \cite{KashiwaraNakashima1994}. Flagged refined skew stable Grothendieck polynomials are studied through Demazure crystals in \cite{Kundu2026}. For semistandard set-valued tableaux, the ordinary type~A crystal operators were defined for straight shapes in \cite{MPS2021} and extended to skew shapes in \cite{MPPS2020}. $K$-theoretic crystals on set-valued tableaux of rectangular shape are constructed in \cite{PechenikScrimshaw2022}. Crystal operators preserve tableau excess. Every nontrivial application of the sign-reversing involution changes tableau excess by one. This paper considers the interaction between these crystal operators and the involution.

The involution and the decomposition by prescribed maximum entries are taken from \cite{FNSOddness}. The ordinary type~A crystal operators are those of \cite{MPS2021,MPPS2020}. The marked Gelfand--Tsetlin and decorated five-vertex constructions for straight shapes are based on \cite{JNS2026,MPS2021}. The commutation criteria appear in Section~\ref{sec:iota-crystal}, their dual forms obtained by rotation in Section~\ref{sec:duality}, and the realizations for skew shapes by marked Gelfand--Tsetlin patterns and decorated five-vertex states in Section~\ref{sec:five-vertex}. The involution organizes cancellation within fibers determined by maximum entries, and the crystal operators organize the same tableaux into type~A crystal graphs. The commutation criteria determine where these two structures are compatible.

For a fixed maximum tableau, the independent choices form a graded Boolean lattice whose rank is tableau excess (Proposition~\ref{prop:max-fiber-boolean}). Its weighted generating function factors over the Boolean coordinates. On every nontrivial orbit, the involution changes one of these coordinates and traverses one edge of the Boolean lattice (Proposition~\ref{prop:iota-coordinate}).

For $T\ne T_0$, the involution $\iota$ of \cite{FNSOddness} acts at the first discrepancy of $T$, the first box in column order whose content differs from the fixed singleton tableau $T_0$. It inserts or removes the entry $k_0$ of $T_0$ in that box. The first discrepancy controls the interaction with the crystal operators. A raising operator cannot select a box preceding the first discrepancy. For every $r\ne k_0$, one has $e_r\iota=\iota e_r$ (Theorem~\ref{thm:raising-away}). For $r\notin\{k_0-1,k_0\}$, the equality $f_r\iota(T)=\iota f_r(T)$ holds if and only if $f_r(T)=0$ or the box selected by $f_r$ does not precede the first discrepancy (Proposition~\ref{prop:lowering-generic}). At $r=k_0$, the involution changes the reduced $k_0$-signature. The remaining commutation problem is characterized by the first discrepancy after applying $e_{k_0}$ and the occurrence selected in the reduced signature (Theorem~\ref{thm:exceptional}). The lowering colors $k_0-1$ and $k_0$ are not characterized in this paper.

Rotation of the skew diagram together with reversal of the alphabet exchanges maximum and minimum entries (Proposition~\ref{prop:rotation}). Under this transformation, a fiber determined by maximum entries becomes a fiber determined by minimum entries with the same Boolean structure. Conjugation by this transformation defines the dual involution $\iota^*$, which preserves minimum entries (Proposition~\ref{prop:iota-star}). Rotation with alphabet reversal exchanges raising and lowering crystal operators, and the commutation results for the dual involution follow from those for the original involution. The dual raising colors are likewise not characterized.

The crystal operators and both involutions also act on decorated five-vertex states (see Section~\ref{sec:five-vertex}). Colored lattice models for double Grothendieck polynomials occur in \cite{BuciumasScrimshaw2022}, and colored five-vertex models for Lascoux polynomials in \cite{BSW2020}. Decorated five-vertex states and crystal transformations for straight shapes are developed in \cite{JNS2026}. Marked Gelfand--Tsetlin patterns for semistandard set-valued tableaux of straight shape occur in \cite{MPS2021}. Related five-vertex models in the Grothendieck setting are studied in \cite{Motegi2021,MotegiSakai2013,MotegiSakai2014}. For a skew shape, the inner partition determines the lower boundary and imposes an additional condition on the marks and nontrivial bumps. Semistandard set-valued tableaux, marked Gelfand--Tsetlin patterns, and decorated five-vertex states are in bijection with preservation of weight and excess. The bijection intertwines the crystal operators on decorated states with the ordinary type~A crystal operators (Theorem~\ref{thm:crystal-intertwining}). On decorated states, $\iota$ acts by a finite displacement of a path segment (Proposition~\ref{prop:iota-mgt}). The involution $\iota^*$ preserves the interlacing chain and changes one bump decoration (Proposition~\ref{prop:iota-star-mgt}).

Section~\ref{sec:tableaux} contains the preliminaries. Sections~\ref{sec:max-fibers}--\ref{sec:duality} develop the theory of the involutions and crystal operators on tableaux. Section~\ref{sec:five-vertex} presents the marked Gelfand--Tsetlin and decorated five-vertex realizations. Section~\ref{sec:conclusion} concludes the paper.

\section{Preliminaries}
\label{sec:tableaux}

\subsection{Set-valued tableaux}
\label{subsec:svt-definitions}

A partition
\[
\lambda=(\lambda_1,\lambda_2,\ldots)
\]
is a weakly decreasing sequence of nonnegative integers with
$\lambda_i=0$ for all sufficiently large $i$. The partition $\lambda$ is identified
with its Young diagram in English notation,
\[
\lambda=\{(i,j)\in\mathbb Z_{>0}^2:1\le j\le\lambda_i\}.
\]
Its size, length, and conjugate are
\[
|\lambda|=\sum_{i\ge1}\lambda_i,
\qquad
\ell(\lambda)=|\{i:\lambda_i>0\}|,
\qquad
\lambda'_j=|\{i:\lambda_i\ge j\}|.
\]

For partitions $\mu\subseteq\lambda$, the skew diagram
$\lambda/\mu$ is the set difference of their Young diagrams.
A box is denoted by $u=(i,j)$. Two boxes are horizontally adjacent
if they share a vertical edge, and vertically adjacent if they share
a horizontal edge.

Fix partitions $\mu\subseteq\lambda$ and an integer
$n\in\mathbb Z_{>0}$. All tableau entries lie in
$\{1,\ldots,n\}$.

A semistandard tableau of shape $\lambda/\mu$ is a filling by entries
in $\{1,\ldots,n\}$ that is weakly increasing along rows and strictly
increasing down columns. The set of these tableaux is denoted by
$\SST_n(\lambda/\mu)$.

For a filling $T$ by nonempty subsets of $\{1,\ldots,n\}$, the
subset in a box $u=(i,j)$ is denoted by $T_u$ or $T_{i,j}$. Set
\[
T_u^{\min}=\min T_u,
\qquad
T_u^{\max}=\max T_u.
\]
A singleton is identified with its unique entry.

\begin{definition}[\cite{Buch2002}]
\label{def:svt}
A \emph{semistandard set-valued tableau} of shape $\lambda/\mu$
is a filling $T$ by nonempty subsets of $\{1,\ldots,n\}$ satisfying
\[
T_{i,j}^{\max}\le T_{i,j+1}^{\min}
\]
for every horizontally adjacent pair $(i,j),(i,j+1)$, and
\[
T_{i,j}^{\max}<T_{i+1,j}^{\min}
\]
for every vertically adjacent pair $(i,j),(i+1,j)$.
\end{definition}

The set of semistandard set-valued tableaux of shape $\lambda/\mu$
is denoted by $\SVT_n(\lambda/\mu)$. The set
$\SST_n(\lambda/\mu)$ is identified with the subset of
$\SVT_n(\lambda/\mu)$ whose box contents are singletons. Throughout,
$\SVT_n(\lambda/\mu)$ is assumed to be nonempty.

For $T\in\SVT_n(\lambda/\mu)$, an entry of $T_u$ other than
$T_u^{\min}$ is an \emph{extra entry}. A box containing more than one
entry is a \emph{multicell}. The \emph{excess} of $T$ is
\[
\ex(T)=\sum_{u\in\lambda/\mu}(|T_u|-1).
\]
For $1\le k\le n$, let
\[
m_k(T)=|\{u\in\lambda/\mu:k\in T_u\}|.
\]
The weight of $T$ and its monomial are
\[
\wt(T)=(m_1(T),\ldots,m_n(T)),
\qquad
x^{\wt(T)}=\prod_{k=1}^n x_k^{m_k(T)}.
\]

The skew stable Grothendieck polynomial is
\begin{equation}
\label{eq:grothendieck}
G_{\lambda/\mu}(x_1,\ldots,x_n;\beta)
=
\sum_{T\in\SVT_n(\lambda/\mu)}
\beta^{\ex(T)}x^{\wt(T)}.
\end{equation}
Set-valued tableau formulas originate in \cite{Buch2002}. The parameter $\beta$ follows the convention of \cite{FK1994}. At $\beta=0$,
equation~\eqref{eq:grothendieck} reduces to
\[
G_{\lambda/\mu}(x_1,\ldots,x_n;0)
=
s_{\lambda/\mu}(x_1,\ldots,x_n).
\]
At $\beta=-1$, the contribution of $T$ has sign
$(-1)^{\ex(T)}$.

\begin{definition}
\label{def:minmax-tableaux}
For $T\in\SVT_n(\lambda/\mu)$, its \emph{minimum tableau} $P$ and
\emph{maximum tableau} $Q$ are
\[
P=(T_u^{\min})_{u\in\lambda/\mu},
\qquad
Q=(T_u^{\max})_{u\in\lambda/\mu}.
\]
\end{definition}

For horizontally adjacent boxes,
\[
T_{i,j}^{\min}
\le T_{i,j}^{\max}
\le T_{i,j+1}^{\min}
\le T_{i,j+1}^{\max},
\]
and for vertically adjacent boxes,
\[
T_{i,j}^{\min}
\le T_{i,j}^{\max}
< T_{i+1,j}^{\min}
\le T_{i+1,j}^{\max}.
\]
Hence
\[
P,Q\in\SST_n(\lambda/\mu).
\]

\begin{example}
\label{ex:running}
For $\lambda=(5,4,3)$, $\mu=(2,1)$, and $n=4$, consider
\[
T=
\begin{ytableau}
\none&\none&1&12&2\\
\none&1&23&3\\
1&23&4
\end{ytableau}.
\]
This tableau belongs to $\SVT_4(\lambda/\mu)$. Its multicells are
$(1,4)$, $(2,3)$, and $(3,2)$. Its excess, weight, and monomial are
\[
\ex(T)=3,
\qquad
\wt(T)=(4,4,3,1),
\qquad
x^{\wt(T)}=x_1^4x_2^4x_3^3x_4.
\]
Its minimum and maximum tableaux are
\[
P=
\begin{ytableau}
\none&\none&1&1&2\\
\none&1&2&3\\
1&2&4
\end{ytableau}
\qquad
Q=
\begin{ytableau}
\none&\none&1&2&2\\
\none&1&3&3\\
1&3&4
\end{ytableau}.
\]
\end{example}

\subsection{Ordinary type~A crystal operators}
\label{subsec:tableau-crystal}

The ordinary type~A crystal operators on semistandard set-valued tableaux of straight shape were introduced in \cite{MPS2021}. The sign convention of \cite{MPS2021} is used. For tableaux of skew shape, the extension in \cite[Definition~10 and Theorem~11]{MPPS2020} applies. In \cite{MPPS2020}, the symbols $+$ and $-$ are interchanged. The pairing and operators are unchanged.

Fix $1\le r<n$ and let $T\in\SVT_n(\lambda/\mu)$. Read the columns
of $T$ from left to right. A column containing $r$ but not $r+1$
contributes $+$. A column containing $r+1$ but not $r$
contributes $-$. A column containing both values or neither value
contributes no sign. Starting with the rightmost $-$ and proceeding
from right to left, pair each $-$ with the nearest unpaired $+$ to
its right. After all possible pairings have been made, the unpaired
signs have the form
\[
+\cdots+\,-\cdots-.
\]
They form the \emph{reduced $r$-signature} of $T$. Since a fixed value
occurs at most once in a tableau column, each sign determines a unique
occurrence of $r$ or $r+1$.

\begin{definition}[\cite{MPPS2020}]
\label{def:svt-crystal}
If no unpaired $-$ remains, set $e_r(T)=0$. Otherwise, $e_r$ selects
the occurrence of $r+1$ in the leftmost unpaired $-$ column. If the
box immediately to the left of the selected box belongs to
$\lambda/\mu$ and contains both $r$ and $r+1$, remove $r+1$ from that
left box and insert $r$ into the selected box. Otherwise, replace the
selected entry $r+1$ by $r$.

If no unpaired $+$ remains, set $f_r(T)=0$. Otherwise, $f_r$ selects
the occurrence of $r$ in the rightmost unpaired $+$ column. If the
box immediately to the right of the selected box belongs to
$\lambda/\mu$ and contains both $r$ and $r+1$, remove $r$ from that
right box and insert $r+1$ into the selected box. Otherwise, replace
the selected entry $r$ by $r+1$.
\end{definition}

We call the first case in each nonzero operator, involving the adjacent box, a \emph{transfer}, and the second a \emph{replacement}. In a transfer, the adjacent box from which an entry is removed is the \emph{transfer source}, and the condition that triggers this case is the \emph{transfer condition}.

For the null element, set $e_r(0)=f_r(0)=0$.

\begin{example}
\label{ex:crystal-two-boxes}
For $r=1$, let
\[
T=
\begin{ytableau}
1&2
\end{ytableau}.
\]
The first column contributes $+$ and the second contributes $-$.
Neither sign is paired. The operator $f_1$ selects the entry $1$ in the first
box, and $e_1$ selects the entry $2$ in the second box. \[
f_1(T)=
\begin{ytableau}
2&2
\end{ytableau},
\qquad
e_1(T)=
\begin{ytableau}
1&1
\end{ytableau}.
\]
\end{example}

\subsection{The sign-reversing involution}
\label{subsec:iota}

For $(i,j)\in\lambda/\mu$, the integer $i-\mu'_j$ is the position of
the box in its column of the skew diagram, counted from the top. Define
the tableau $T_0$ by
\[
(T_0)_{i,j}=\{i-\mu'_j\}.
\]
Since $\SVT_n(\lambda/\mu)$ is nonempty, every column of
$\lambda/\mu$ has at most $n$ boxes. The inequality
$\mu'_j\ge\mu'_{j+1}$ implies that the entries of $T_0$ are weakly
increasing along rows. They increase by one down each column and belong to
$\{1,\ldots,n\}$. In particular,
\[
T_0\in\SST_n(\lambda/\mu).
\]

The \emph{column order} on the boxes of $\lambda/\mu$ is the order by
increasing column index and, within each column, by increasing row index.
For $T\ne T_0$, let
$u_\iota(T)$ be the first box in this order whose content differs
from that of $T_0$. The box $u_\iota(T)$ is called the \emph{first discrepancy}
of $T$. If
\[
u_\iota(T)=(i,j),
\]
the entry of $T_0$ in this box is
\[
k_0=i-\mu'_j.
\]

\begin{definition}[\cite{FNSOddness}]
\label{def:iota}
For $T\ne T_0$ with first discrepancy $u=u_\iota(T)$, define
\[
(\iota(T))_v=
\begin{cases}
T_v,&v\ne u,\\[1mm]
T_u\cup\{k_0\},&v=u,\ k_0\notin T_u,\\[1mm]
T_u\setminus\{k_0\},&v=u,\ k_0\in T_u.
\end{cases}
\]
\end{definition}

Extend this map to all of
$\SVT_n(\lambda/\mu)$ by setting
\[
\iota(T_0)=T_0.
\]
For compositions with the crystal operators, set
\[
\iota(0)=0.
\]

For $T\ne T_0$, \cite[Appendix~B]{FNSOddness} proves that
$\iota(T)\in\SVT_n(\lambda/\mu)$, that $\iota$ is a
fixed-point-free involution on
$\SVT_n(\lambda/\mu)\setminus\{T_0\}$, and that the first discrepancy
of $\iota(T)$ is again $u_\iota(T)$. These properties are also recovered
from the description in terms of Boolean fibers in Proposition~\ref{prop:iota-coordinate}.
As a map on $\SVT_n(\lambda/\mu)$, the extended involution has the unique
fixed point $T_0$. For $T\ne T_0$, one application inserts or removes
one entry, and
\[
\ex(\iota(T))=\ex(T)\pm1.
\]
\[
(-1)^{\ex(\iota(T))}=-(-1)^{\ex(T)}.
\]

\begin{example}
\label{ex:iota-on-skew-svt}
For the tableau in Example~\ref{ex:running},
\[
T_0=
\begin{ytableau}
\none&\none&1&1&1\\
\none&1&2&2\\
1&2&3
\end{ytableau}.
\]
Its first discrepancy is $u_\iota(T)=(3,2)$, with $k_0=2$, and
\[
\iota(T)=
\begin{ytableau}
\none&\none&1&12&2\\
\none&1&23&3\\
1&3&4
\end{ytableau}.
\]
The first discrepancy of $\iota(T)$ is again $(3,2)$.
\end{example}

\section{Maximum fibers and Boolean coordinates}
\label{sec:max-fibers}

Appendix~C of \cite{FNSOddness} decomposes
$\SVT_n(\lambda/\mu)$ according to the maximum tableau and describes
the entries smaller than each prescribed maximum by independent choices.
These choices form Boolean coordinates. Tableau excess counts the
selected coordinates, and the involution changes one coordinate on
every nontrivial orbit.

\subsection{Fibers determined by the maximum tableau}
\label{subsec:max-fibers}

For straight shapes, parallel decompositions according to prescribed
maximum and minimum entries appear in
\cite[Proposition~8.11]{MoralesZhu2022}.

For $Q\in\SST_n(\lambda/\mu)$, define
\[
\mathcal F_Q^{\max}
=
\{T\in\SVT_n(\lambda/\mu):
  T_u^{\max}=Q_u
  \text{ for every }u\in\lambda/\mu\}.
\]
The singleton tableau $Q$ belongs to $\mathcal F_Q^{\max}$.

Let $u=(i,j)\in\lambda/\mu$. For a fixed maximum tableau $Q$, every
entry $k<Q_{i,j}$ occurring in the box $(i,j)$ satisfies the row and
column restrictions imposed by the prescribed maxima immediately to
the left and above. Define
\begin{equation}
\label{eq:omega-Q}
\begin{split}
\Omega(Q)=\{((i,j),k):{}&
(i,j)\in\lambda/\mu,\quad 1\le k<Q_{i,j},\\
&Q_{i,j-1}\le k
\quad\text{if }(i,j-1)\in\lambda/\mu,\\
&Q_{i-1,j}<k
\quad\text{if }(i-1,j)\in\lambda/\mu
\}.
\end{split}
\end{equation}
An element $((i,j),k)\in\Omega(Q)$ records an optional entry
$k<Q_{i,j}$ in the box $(i,j)$.

For the tableau $T_0$ from Subsection~\ref{subsec:iota},
\[
\Omega(T_0)=\varnothing.
\]
If the entry of $T_0$ in a box is $k>1$, the box immediately above
it belongs to the skew diagram and has entry $k-1$. The last
condition in \eqref{eq:omega-Q} excludes every integer smaller than
$k$. At the top of a column, the entry of $T_0$ is $1$.

\begin{example}
\label{ex:max-fiber-coordinates}
For the maximum tableau in Example~\ref{ex:running},
\[
Q=
\begin{ytableau}
\none&\none&1&2&2\\
\none&1&3&3\\
1&3&4
\end{ytableau},
\]
one has
\[
\Omega(Q)
=
\{((1,4),1),\ ((2,3),2),\ ((3,2),2)\}.
\]
The three optional entries occur in the boxes $(1,4)$, $(2,3)$,
and $(3,2)$.
\end{example}

\subsection{Boolean order and weighted fiber sums}
\label{subsec:max-fiber-boolean}

The independent coordinates in \cite[Appendix~C]{FNSOddness} carry
the order induced by inclusion in each box. For $T,U\in\mathcal F_Q^{\max}$,
write $T\le U$ if
\[
T_u\subseteq U_u
\qquad
\text{for every }u\in\lambda/\mu.
\]
The Boolean lattice $2^{\Omega(Q)}$ is ordered by inclusion.

\begin{proposition}
\label{prop:max-fiber-boolean}
For $Q\in\SST_n(\lambda/\mu)$, the map
\[
\Phi_Q:\mathcal F_Q^{\max}\longrightarrow 2^{\Omega(Q)},
\qquad
\Phi_Q(T)
=
\{(u,k)\in\Omega(Q):k\in T_u\},
\]
is an isomorphism of graded posets. Under this isomorphism,
\[
\operatorname{rank}(T)=\ex(T).
\]
In particular,
\[
|\mathcal F_Q^{\max}|=2^{|\Omega(Q)|}.
\]
\end{proposition}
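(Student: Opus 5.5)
We will construct an explicit inverse to $\Phi_Q$ and then check that both maps respect inclusion. First, $\Phi_Q$ is well defined and injective. For $T\in\mathcal F_Q^{\max}$, every $k\in T_u$ satisfies $k\le T_u^{\max}=Q_u$, and $Q_u$ itself always lies in $T_u$. So $T_u$ is determined by the set of its entries $k<Q_u$.

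The main step is to show that each such entry $k$ in $u=(i,j)$ satisfies the defining conditions of \eqref{eq:omega-Q}.
\begin{itemize}
\item If $(i,j-1)\in\lambda/\mu$, Definition~\ref{def:svt} gives $Q_{i,j-1}=T_{i,j-1}^{\max}\le T_{i,j}^{\min}\le k$.
\item If $(i-1,j)\in\lambda/\mu$, it gives $Q_{i-1,j}=T_{i-1,j}^{\max}<T_{i,j}^{\min}\le k$.
\end{itemize}
Hence $\{k\in T_u:k<Q_u\}=\{k:(u,k)\in\Phi_Q(T)\}$, and $\Phi_Q$ is injective.

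For surjectivity, take any $S\subseteq\Omega(Q)$ and define $T^S_u=\{Q_u\}\cup\{k:(u,k)\in S\}$. Then $(T^S_u)^{\max}=Q_u$, because every optional $k$ is smaller than $Q_u$. We must check that $T^S$ is semistandard; this is the only point needing care. Take a horizontally adjacent pair $(i,j),(i,j+1)$. Its condition reads $Q_{i,j}\le (T^S_{i,j+1})^{\min}$. The entry $Q_{i,j+1}$ satisfies this because $Q$ is semistandard. Every optional $k$ in $(i,j+1)$ satisfies it by the row condition in \eqref{eq:omega-Q}. The vertical condition $Q_{i-1,j}<(T^S_{i,j})^{\min}$ follows in the same way, from the strictness of $Q$ down columns and the column condition in \eqref{eq:omega-Q}. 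The key observation is that the condition between two adjacent boxes involves only the maximum of the earlier box, which is fixed as $Q$. So the constraints decouple into independent conditions on the individual pairs $(u,k)$. Thus $T^S\in\mathcal F_Q^{\max}$ and $\Phi_Q(T^S)=S$. By the injectivity above, $T\mapsto\Phi_Q(T)$ and $S\mapsto T^S$ are mutually inverse bijections.

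Order preservation in both directions is immediate, since $T_u=\{Q_u\}\sqcup\{k:(u,k)\in\Phi_Q(T)\}$. Hence $T_u\subseteq U_u$ for every $u$ if and only if $\Phi_Q(T)\subseteq\Phi_Q(U)$, and $\Phi_Q$ is a poset isomorphism. The Boolean lattice is graded by cardinality. The same decomposition gives $|T_u|-1=|\{k:(u,k)\in\Phi_Q(T)\}|$, and summing over $u$ yields $\operatorname{rank}(T)=|\Phi_Q(T)|=\ex(T)$, so the isomorphism is graded. The count $|\mathcal F_Q^{\max}|=2^{|\Omega(Q)|}$ follows. I expect no real obstacle; the only subtle point is the decoupling in the surjectivity step.
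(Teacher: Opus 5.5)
Your proposal is correct and follows essentially the same route as the paper: semistandardness places every entry $k<Q_u$ of $T_u$ in $\Omega(Q)$, and the filling $T^S_u=\{Q_u\}\cup\{k:(u,k)\in S\}$ gives the inverse. Inclusion and excess then transfer directly through the decomposition $T_u=\{Q_u\}\sqcup\{k:(u,k)\in\Phi_Q(T)\}$.
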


\begin{proof}
Let $u=(i,j)$. If $k<Q_{i,j}$ occurs in $T_u$ for
$T\in\mathcal F_Q^{\max}$, semistandardness implies
\[
Q_{i,j-1}\le k
\]
if $(i,j-1)\in\lambda/\mu$, and
\[
Q_{i-1,j}<k
\]
if $(i-1,j)\in\lambda/\mu$. These inequalities place
$(u,k)$ in $\Omega(Q)$.

Conversely, for $B\subseteq\Omega(Q)$, set
\[
T_u=\{Q_u\}\cup\{k:(u,k)\in B\}.
\]
For horizontally adjacent boxes $(i,j)$ and $(i,j+1)$, every optional
entry in the right box is at least $Q_{i,j}$ by
\eqref{eq:omega-Q}, and
\[
T_{i,j}^{\max}=Q_{i,j}\le T_{i,j+1}^{\min}.
\]
For vertically adjacent boxes $(i,j)$ and $(i+1,j)$, every optional
entry in the lower box is strictly larger than $Q_{i,j}$, and
\[
T_{i,j}^{\max}=Q_{i,j}<T_{i+1,j}^{\min}.
\]
This filling belongs to $\mathcal F_Q^{\max}$.

The two constructions are inverse. Under $\Phi_Q$, inclusion in every box is subset inclusion in $\Omega(Q)$. Each selected coordinate
contributes one entry in addition to the prescribed maximum:
\[
\ex(T)=|\Phi_Q(T)|.
\]
\end{proof}

\begin{corollary}
\label{cor:max-fiber-factorization}
For every $Q\in\SST_n(\lambda/\mu)$,
\begin{equation}
\label{eq:max-fiber-factorization}
\sum_{T\in\mathcal F_Q^{\max}}
\beta^{\ex(T)}x^{\wt(T)}
=
x^{\wt(Q)}
\prod_{(u,k)\in\Omega(Q)}(1+\beta x_k).
\end{equation}
\begin{equation}
\label{eq:grothendieck-max-fibers}
G_{\lambda/\mu}(x_1,\ldots,x_n;\beta)
=
\sum_{Q\in\SST_n(\lambda/\mu)}
x^{\wt(Q)}
\prod_{(u,k)\in\Omega(Q)}(1+\beta x_k).
\end{equation}
\end{corollary}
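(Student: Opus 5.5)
The first identity \eqref{eq:max-fiber-factorization} will be read off from the bijection $\Phi_Q$ of Proposition~\ref{prop:max-fiber-boolean}; the second identity \eqref{eq:grothendieck-max-fibers} then follows by summing over all possible maximum tableaux.

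For the fiber identity, parametrize $\mathcal F_Q^{\max}$ by subsets $B\subseteq\Omega(Q)$ through $\Phi_Q^{-1}$. The filling $T=\Phi_Q^{-1}(B)$ has $T_u=\{Q_u\}\cup\{k:(u,k)\in B\}$, and every optional entry $k$ is strictly smaller than $Q_u$. Hence $T_u$ is a disjoint union, and each value $k$ contributes to $m_k(T)$ once from the maxima and once for each selected pair $(u,k)\in B$. This gives
\[
x^{\wt(T)}=x^{\wt(Q)}\prod_{(u,k)\in B}x_k .
\]
Proposition~\ref{prop:max-fiber-boolean} also gives $\ex(T)=|B|$, so the summand is $x^{\wt(Q)}\prod_{(u,k)\in B}\beta x_k$. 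Summing over all $B\subseteq\Omega(Q)$ and expanding the product $\prod_{(u,k)\in\Omega(Q)}(1+\beta x_k)$ yields \eqref{eq:max-fiber-factorization}.

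For \eqref{eq:grothendieck-max-fibers}, note that every $T\in\SVT_n(\lambda/\mu)$ has a maximum tableau $Q$, and $Q\in\SST_n(\lambda/\mu)$ by the observation after Definition~\ref{def:minmax-tableaux}. Therefore $\SVT_n(\lambda/\mu)$ is the disjoint union of the fibers $\mathcal F_Q^{\max}$ over $Q\in\SST_n(\lambda/\mu)$, and each fiber is nonempty because it contains $Q$ itself. Splitting the sum \eqref{eq:grothendieck} along this partition and applying \eqref{eq:max-fiber-factorization} to each fiber gives the formula.

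No step is a real obstacle. The one point that needs care is the weight computation: an optional entry $k$ and the maximum $Q_u$ in the same box must not coincide, which is guaranteed by the condition $k<Q_{i,j}$ in \eqref{eq:omega-Q}.
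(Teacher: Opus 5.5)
Your proposal is correct and follows essentially the same route as the paper: you parametrize $\mathcal F_Q^{\max}$ by subsets of $\Omega(Q)$ via Proposition~\ref{prop:max-fiber-boolean}, read each selected coordinate as contributing $\beta x_k$, and then partition $\SVT_n(\lambda/\mu)$ into maximum fibers. Your explicit remark that $k<Q_u$ keeps optional entries distinct from the maximum, so the weight count is exact, is a detail the paper leaves implicit.
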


\begin{proof}
The singleton tableau $Q$ contributes $x^{\wt(Q)}$. Selecting a
coordinate $(u,k)$ increases the excess by one and contributes one
additional occurrence of $k$. Its contribution is $\beta x_k$.
The independent contributions over $\Omega(Q)$ form the product in
\eqref{eq:max-fiber-factorization}.

Every tableau in $\SVT_n(\lambda/\mu)$ has a unique maximum tableau.
The fibers $\mathcal F_Q^{\max}$ partition $\SVT_n(\lambda/\mu)$.
Equation~\eqref{eq:grothendieck-max-fibers} is the sum of
\eqref{eq:max-fiber-factorization} over all $Q\in\SST_n(\lambda/\mu)$.
\end{proof}

\begin{example}
\label{ex:max-fiber}
For the tableau $Q$ in Example~\ref{ex:max-fiber-coordinates}, the
eight tableaux in $\mathcal F_Q^{\max}$ are arranged by excess.

\begingroup
\setlength{\abovedisplayskip}{7pt}
\setlength{\belowdisplayskip}{7pt}
\setlength{\abovedisplayshortskip}{5pt}
\setlength{\belowdisplayshortskip}{5pt}
\ytableausetup{mathmode,boxsize=1.55em,centertableaux}
\[
\ex=0:\quad
\begin{ytableau}
\none&\none&1&2&2\\
\none&1&3&3\\
1&3&4
\end{ytableau}
\]
\[
\ex=1:\quad
\begin{array}{c@{\quad}c@{\quad}c}
\begin{ytableau}
\none&\none&1&12&2\\
\none&1&3&3\\
1&3&4
\end{ytableau}
&
\begin{ytableau}
\none&\none&1&2&2\\
\none&1&23&3\\
1&3&4
\end{ytableau}
&
\begin{ytableau}
\none&\none&1&2&2\\
\none&1&3&3\\
1&23&4
\end{ytableau}
\end{array}
\]
\[
\ex=2:\quad
\begin{array}{c@{\quad}c@{\quad}c}
\begin{ytableau}
\none&\none&1&12&2\\
\none&1&23&3\\
1&3&4
\end{ytableau}
&
\begin{ytableau}
\none&\none&1&12&2\\
\none&1&3&3\\
1&23&4
\end{ytableau}
&
\begin{ytableau}
\none&\none&1&2&2\\
\none&1&23&3\\
1&23&4
\end{ytableau}
\end{array}
\]
\[
\ex=3:\quad
\begin{ytableau}
\none&\none&1&12&2\\
\none&1&23&3\\
1&23&4
\end{ytableau}
\]
$\mathcal F_Q^{\max}$ is a Boolean lattice of rank $3$, with rank sizes
$1,3,3,1$. Formula~\eqref{eq:max-fiber-factorization} becomes
\[
\sum_{U\in\mathcal F_Q^{\max}}
\beta^{\ex(U)}x^{\wt(U)}
=
x_1^3x_2^2x_3^3x_4
(1+\beta x_1)(1+\beta x_2)^2.
\]
The tableau of rank $3$ is $T$ in Example~\ref{ex:running}.
\endgroup
\end{example}

\subsection{The involution as a Boolean coordinate change}
\label{subsec:iota-max-fiber}

\begin{lemma}
\label{lem:iota-max-preservation}
For every $T\in\SVT_n(\lambda/\mu)$ and every
$u\in\lambda/\mu$,
\[
(\iota(T))_u^{\max}=T_u^{\max}.
\]
\end{lemma}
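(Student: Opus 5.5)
The plan is to reduce the statement to the single box $u=u_\iota(T)$ and prove the two-sided bound
\[
T_u^{\min}\ge k_0
\qquad\text{and}\qquad
T_u\ne\{k_0\},
\]
from which $k_0<T_u^{\max}$ follows directly. The cases are first set aside as follows. If $T=T_0$, then $\iota(T)=T$ and there is nothing to prove. For $T\ne T_0$, Definition~\ref{def:iota} changes only the box $u=u_\iota(T)=(i,j)$, so it suffices to show $(\iota(T))_u^{\max}=T_u^{\max}$, where $k_0=i-\mu'_j$.

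The lower bound $T_u^{\min}\ge k_0$ comes from the column condition. If $k_0=1$ it is trivial, since all entries are at least $1$. If $k_0>1$, then $i>\mu'_j+1$, so the box $(i-1,j)$ lies in $\lambda/\mu$. This box precedes $u$ in column order, so by minimality of the first discrepancy its content agrees with $T_0$, namely $T_{i-1,j}=\{k_0-1\}$. Strict increase down columns in Definition~\ref{def:svt} then gives $T_u^{\min}>k_0-1$, that is, $T_u^{\min}\ge k_0$. This is the only point where the structure of $T_0$ and of the column order is used. I expect it to be the main (though mild) obstacle: one has to notice that only the box above matters, not the box to the left, and that it lies inside the skew diagram exactly when $k_0>1$.

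The argument then splits into the two cases of Definition~\ref{def:iota}.

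\emph{Insertion, $k_0\notin T_u$.} The bound gives $T_u^{\min}>k_0$, so the max is unchanged:
\[
\max(T_u\cup\{k_0\})=T_u^{\max}.
\]

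\emph{Removal, $k_0\in T_u$.} The bound gives $T_u^{\min}=k_0$. Since $u$ is a discrepancy, $T_u\ne\{k_0\}$, so $|T_u|\ge2$ and $T_u^{\max}>k_0$. Hence $T_u\setminus\{k_0\}$ is nonempty and has the same maximum.

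In both cases $(\iota(T))_u^{\max}=T_u^{\max}$, and all other boxes are untouched. This completes the proof.
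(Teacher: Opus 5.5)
Your proof is correct and follows the paper's argument: both use the singleton $\{k_0-1\}$ in the box above $u$ (or positivity when $k_0=1$) to get $T_u^{\min}\ge k_0$. Both then conclude via $T_u^{\min}>k_0$ in the insertion case, and via $T_u^{\min}=k_0$ together with $T_u\ne\{k_0\}$ in the removal case.
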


\begin{proof}
For $T=T_0$, the assertion follows from $\iota(T_0)=T_0$.
Suppose $T\ne T_0$. Only the first discrepancy is affected. Let
$u_\iota(T)=(i,j)$ and $k_0=i-\mu'_j$. Suppose first that
$k_0\notin T_{i,j}$. If the box immediately above $(i,j)$ belongs to
$\lambda/\mu$, it precedes the first discrepancy and contains the
singleton $\{k_0-1\}$. Column strictness implies
$T_{i,j}^{\min}\ge k_0$, and the absence of $k_0$ implies
$T_{i,j}^{\min}>k_0$. If $(i,j)$ is at the top of its column,
$k_0=1$. Positivity of the entries and the absence of $1$ imply the
same strict inequality. Inserting $k_0$ does not change the maximum.

Suppose that $k_0\in T_{i,j}$. The comparison with the box above,
or positivity at the top of a column, implies
$k_0=T_{i,j}^{\min}$. Since $(i,j)$ is a discrepancy, this box is not
the singleton $\{k_0\}$. Removing $k_0$ leaves its maximum unchanged.
All other boxes are fixed by $\iota$.
\end{proof}

\begin{proposition}
\label{prop:iota-coordinate}
Let $Q\in\SST_n(\lambda/\mu)$ and
$T\in\mathcal F_Q^{\max}$ with $T\ne T_0$. Write
\[
u_\iota(T)=(i,j),
\qquad
k_0=i-\mu'_j.
\]
One has
\[
(u_\iota(T),k_0)\in\Omega(Q),
\]
and toggling this coordinate produces $\iota(T)\in\mathcal F_Q^{\max}$. Exactly one
of $\Phi_Q(T)$ and $\Phi_Q(\iota(T))$ contains
$(u_\iota(T),k_0)$. The tableaux $T$ and $\iota(T)$ agree in all other
coordinates and are adjacent in the Boolean lattice $\mathcal F_Q^{\max}$.
The first discrepancy of $\iota(T)$ is again $u_\iota(T)$.
\end{proposition}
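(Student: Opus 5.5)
The plan is to check directly that $(u,k_0)$, with $u=u_\iota(T)=(i,j)$, satisfies the three conditions in \eqref{eq:omega-Q}. Then $\iota$ becomes the toggle of this coordinate under the isomorphism $\Phi_Q$ of Proposition~\ref{prop:max-fiber-boolean}, and the remaining claims follow from that identification.

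\textbf{Membership in $\Omega(Q)$.} Every box preceding $u$ in column order carries the singleton of $T_0$, so its maximum in $Q$ equals its $T_0$-entry.

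\emph{The condition $k_0<Q_u$.} The proof of Lemma~\ref{lem:iota-max-preservation} shows that either $k_0\notin T_u$ and $T_u^{\min}>k_0$, or $k_0=T_u^{\min}$ and $T_u\ne\{k_0\}$. In both cases $Q_u=T_u^{\max}>k_0$.

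\emph{The upper condition.} If $(i-1,j)\in\lambda/\mu$, it precedes $u$ and $Q_{i-1,j}=k_0-1<k_0$.

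\emph{The left condition.} If $(i,j-1)\in\lambda/\mu$, it lies in an earlier column, so it precedes $u$ and $Q_{i,j-1}=(T_0)_{i,j-1}=i-\mu'_{j-1}$. Since $\mu'_{j-1}\ge\mu'_j$, this is at most $i-\mu'_j=k_0$. This is the only step that uses the skew geometry. I expect it to be the main point to handle carefully: one must confirm that every box to the left in the same row precedes $u$ in column order, which is immediate.

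\textbf{Toggling and adjacency.} By Lemma~\ref{lem:iota-max-preservation}, $\iota(T)\in\mathcal F_Q^{\max}$. The map $\iota$ changes only the box $u$, and there it only inserts or removes $k_0<Q_u$. Under $\Phi_Q$ this toggles membership of $(u,k_0)$ and leaves every other coordinate unchanged. So exactly one of $\Phi_Q(T)$ and $\Phi_Q(\iota(T))$ contains $(u,k_0)$. The two images differ by a single element, so $T$ and $\iota(T)$ are adjacent in the Boolean lattice, which is a cover relation by Proposition~\ref{prop:max-fiber-boolean}.

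\textbf{First discrepancy of $\iota(T)$.} Boxes preceding $u$ are untouched, so they still agree with $T_0$. It remains to show $\iota(T)_u\ne\{k_0\}$. If $k_0$ is inserted, then $\iota(T)_u\ni Q_u>k_0$. If $k_0$ is removed, then $k_0\notin\iota(T)_u$. In either case $u$ is again the first discrepancy of $\iota(T)$.
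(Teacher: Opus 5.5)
Your proposal is correct and follows essentially the same route as the paper: you verify the three defining inequalities of $\Omega(Q)$ at $(u_\iota(T),k_0)$ using the prescribed $T_0$-entries above and to the left (with $\mu'_{j-1}\ge\mu'_j$), then read off the toggle, the adjacency, and the unchanged first discrepancy from the Boolean isomorphism $\Phi_Q$. One small point: Lemma~\ref{lem:iota-max-preservation} only shows that the maxima are preserved, so to obtain $\iota(T)\in\mathcal F_Q^{\max}$ without relying on the semistandardness cited from \cite{FNSOddness}, you should invoke, as the paper does, the converse construction in Proposition~\ref{prop:max-fiber-boolean}, which shows that toggling any coordinate of $\Omega(Q)$ produces a tableau in the fiber.
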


\begin{proof}
The argument in the proof of Lemma~\ref{lem:iota-max-preservation} yields
$k_0<Q_{i,j}$. If $(i,j-1)\in\lambda/\mu$, that box precedes
$u_\iota(T)$, and
\[
Q_{i,j-1}=i-\mu'_{j-1}\le k_0.
\]
If $(i-1,j)\in\lambda/\mu$, likewise $Q_{i-1,j}=k_0-1<k_0$.
These are precisely the conditions in \eqref{eq:omega-Q}, hence
$(u_\iota(T),k_0)\in\Omega(Q)$. Proposition~\ref{prop:max-fiber-boolean}
shows that toggling this coordinate produces a tableau in $\mathcal F_Q^{\max}$,
which is $\iota(T)$ by Definition~\ref{def:iota}. The proof of
Lemma~\ref{lem:iota-max-preservation} also shows that the toggled box remains
different from the prescribed singleton. All preceding boxes are unchanged,
and the first discrepancy remains $u_\iota(T)$. The remaining assertions
follow from the description in Boolean coordinates.
\end{proof}

Every fiber $\mathcal F_Q^{\max}$ is invariant under $\iota$. Since the first
discrepancy is unchanged, a second application toggles the same coordinate.
Thus $\iota^2(T)=T$ for $T\ne T_0$, and $T_0$ is the unique fixed point.
If $Q=T_0$, $\Omega(Q)=\varnothing$ and $\mathcal F_Q^{\max}=\{T_0\}$. Otherwise,
$\iota$ pairs the vertices of $\mathcal F_Q^{\max}$ along Boolean edges.

\begin{example}
\label{ex:iota-running-max-fiber}
For the tableau $T$ in Example~\ref{ex:running},
\[
\Phi_Q(T)
=
\{((1,4),1),((2,3),2),((3,2),2)\}.
\]
Its first discrepancy is $(3,2)$ and $k_0=2$, with
\[
\Phi_Q(\iota(T))
=
\Phi_Q(T)\setminus\{((3,2),2)\}.
\]
The two tableaux are adjacent vertices of ranks $3$ and $2$ in
$\mathcal F_Q^{\max}$.
\end{example}

At $\beta=-1$ and $x_1=\cdots=x_n=1$, the weighted factorization in
Corollary~\ref{cor:max-fiber-factorization} specializes to the
fiberwise cancellation in \cite[Appendix~C]{FNSOddness}.
Proposition~\ref{prop:iota-coordinate} identifies the
pairing by $\iota$ with Boolean edges.

\section{The involution and the crystal operators}
\label{sec:iota-crystal}

\begin{lemma}
\label{lem:crystal-excess}
Every nonzero application of a crystal operator preserves excess.
If $e_r(T)\ne0$,
\[
\ex(e_r(T))=\ex(T),
\]
and if $f_r(T)\ne0$,
\[
\ex(f_r(T))=\ex(T).
\]
\end{lemma}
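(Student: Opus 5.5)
The plan is to read the count directly off Definition~\ref{def:svt-crystal}. Since $\ex(T)=\sum_u(|T_u|-1)=\sum_u|T_u|-|\lambda/\mu|$ and the operators never change the set of boxes, it suffices to show that a nonzero $e_r$ or $f_r$ preserves the total number of entries $\sum_u|T_u|$. I would treat $e_r$ in detail and obtain $f_r$ by the symmetric argument, with left and right interchanged and $r$ and $r+1$ interchanged. I would take from \cite{MPPS2020} the fact that $e_r(T)$ and $f_r(T)$, when nonzero, lie in $\SVT_n(\lambda/\mu)$. The only point left is that every step is a genuine replacement or a genuine transfer of single entries, so that no entries collapse or merge.

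In the replacement case for $e_r$, the selected box $u$ lies in an unpaired $-$ column. That column contains $r+1$ but not $r$, so $r\notin T_u$. Replacing $r+1$ by $r$ in $T_u$ therefore yields a set of the same cardinality, and all other boxes are unchanged. In the transfer case, the left box $v$ contains both $r$ and $r+1$, so removing $r+1$ lowers $|T_v|$ by exactly one. Because $v$ also contains $r$, it stays nonempty. The selected box $u$ again lies in a column without $r$, so inserting $r$ raises $|T_u|$ by exactly one. The total is preserved in both cases.

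The $f_r$ case works the same way. The selected box lies in an unpaired $+$ column, which contains $r$ and not $r+1$, so replacing $r$ by $r+1$ keeps the cardinality. In a transfer, the right box contains both values, so removing $r$ leaves it nonempty and lowers its size by one, while inserting $r+1$ into the selected box raises its size by one.

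The only subtle point is checking that the inserted value is genuinely new in the selected box. This follows from the sign convention, since a $-$ column contains no $r$ and a $+$ column contains no $r+1$. Everything else is bookkeeping.
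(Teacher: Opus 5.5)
Your proposal is correct and takes essentially the same approach as the paper: write $\ex(T)$ as the total number of entries minus the number of boxes, and observe that both the replacement and the transfer cases leave that total unchanged. Your added check that the inserted value is genuinely new in the selected box (an unpaired $-$ column contains no $r$, an unpaired $+$ column contains no $r+1$) makes explicit a step the paper's proof leaves implicit.
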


\begin{proof}
In the replacement case, one entry is replaced by another entry in
the same box. In the transfer case, one entry is removed from an
adjacent box and one entry is inserted into the selected box. In
either case, the total number of entries is unchanged.
\end{proof}

Fix $T\ne T_0$ and write
\[
u=u_\iota(T)=(i,j),
\qquad
k_0=i-\mu'_j.
\]

For every raising color $r\ne k_0$, the involution commutes with $e_r$. For lowering colors $r\notin\{k_0-1,k_0\}$, commutation is characterized by the position of the box selected by $f_r$. The raising color $k_0$ requires a separate analysis of the reduced signature. All compositions use the conventions $\iota(0)=0$ and $e_r(0)=f_r(0)=0$.

\subsection{The position selected by a raising operator}
\label{subsec:raising-position}

The column order defining the first discrepancy imposes a restriction on every raising move.

\begin{lemma}
\label{lem:e-not-before}
If $e_r(T)\ne0$, the box containing the occurrence selected by $e_r$ does not precede $u$ in the column order.
\end{lemma}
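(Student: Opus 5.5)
The plan is a direct contradiction argument. It uses only the fact that every box preceding the first discrepancy $u=(i,j)$ in column order carries the same singleton as $T_0$. Suppose $e_r(T)\ne0$, and let $v=(a,b)$ be the box containing the occurrence of $r+1$ selected by $e_r$. By Definition~\ref{def:svt-crystal}, this occurrence lies in an unpaired $-$ column. In particular, column $b$ of $T$ contains $r+1$ but not $r$. Assume, for contradiction, that $v$ precedes $u$ in the column order.

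First I identify the content of $v$. Since $v$ precedes the first discrepancy, $T_v=(T_0)_v=\{a-\mu'_b\}$. Because $r+1\in T_v$, this gives $a-\mu'_b=r+1\ge2$. By the remark in Subsection~\ref{subsec:iota} (also used for $\Omega(T_0)=\varnothing$), an entry $k>1$ of $T_0$ is not at the top of its column. Hence the box $v^-=(a-1,b)$ belongs to $\lambda/\mu$, and $(T_0)_{v^-}=\{r\}$.

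The key step is to check that $v^-$ also precedes $u$, so that $T_{v^-}=(T_0)_{v^-}=\{r\}$. There are two cases.
\begin{itemize}
\item If $b<j$, every box of column $b$ precedes $u$, including $v^-$.
\item If $b=j$, then $v$ precedes $u$ in the same column, so $a<i$, and therefore $a-1<i$ as well.
\end{itemize}
In either case $v^-$ precedes $u$ in the column order.

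Thus column $b$ of $T$ contains both $r$ (in $v^-$) and $r+1$ (in $v$). Such a column contributes no sign to the $r$-signature, contradicting the fact that it is an unpaired $-$ column. I do not expect a real obstacle here. The only point needing care is confirming that $v^-$ lies in the skew diagram and precedes $u$, which is exactly the two-case check above.
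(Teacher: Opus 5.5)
Your proposal is correct and follows essentially the same argument as the paper. In both, a box preceding $u$ that contains $r+1\ge2$ equals the $T_0$-singleton, so the box above it lies in $\lambda/\mu$ and carries the prescribed singleton $\{r\}$; that column then contains both $r$ and $r+1$ and contributes no sign, so it cannot be the selected unpaired minus column. Your two-case check that $v^-$ precedes $u$ just spells out a step the paper states in one phrase.
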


\begin{proof}
Every box preceding $u$ is the singleton prescribed by $T_0$.
Suppose that such a box contains $r+1$. Since $r+1\ge2$, the box immediately above it belongs to the skew diagram and, because it precedes $u$, contains the entry $r$ prescribed by $T_0$. The same tableau column contains both $r$ and $r+1$ and contributes no sign to the $r$-signature. Thus the occurrence selected by $e_r$ cannot lie in a box preceding $u$.
\end{proof}

For $r\notin\{k_0-1,k_0\}$, the involution changes neither of the values that determine the $r$-signature. The neighboring color $k_0-1$ requires a separate comparison.

\begin{lemma}
\label{lem:neighboring-signature}
Assume $k_0>1$. The reduced $(k_0-1)$-signatures of $T$ and $\iota(T)$ have the same unpaired minus signs in the same order. Their numbers of unpaired plus signs differ by one.
\end{lemma}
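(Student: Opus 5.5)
We will compare the $(k_0-1)$-signatures of $T$ and $\iota(T)$ column by column. Set $r=k_0-1\ge1$, so the two relevant values are $r=k_0-1$ and $r+1=k_0$. By Definition~\ref{def:iota}, $\iota$ changes only the box $u=(i,j)$, and only by inserting or removing $k_0$. Every column other than column $j$ therefore contributes the same sign (or no sign) to both signatures. The argument has three steps: determine the sign of column $j$ in each tableau, show that no minus sign lies to the left of column $j$, and deduce that the pairing of minus signs is unaffected.

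\emph{Step 1: column $j$.} Since $k_0=i-\mu'_j>1$, the box $(i-1,j)$ lies in $\lambda/\mu$. It precedes $u$ in the column order, so it is the singleton $\{k_0-1\}$ of $T_0$, and column $j$ contains $r$ in both $T$ and $\iota(T)$. Next we check that $k_0$ occurs in column $j$ only possibly in $u$. Boxes above $(i-1,j)$ have entries smaller than $k_0-1$. If $k_0\in T_u$, then column strictness puts all entries below $u$ above $k_0$. If $k_0\notin T_u$, the proof of Lemma~\ref{lem:iota-max-preservation} gives $T_u^{\min}>k_0$, and the boxes below $u$ again exceed $k_0$. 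Hence column $j$ contains $k_0$ exactly when $u$ does. Of the two tableaux $T$ and $\iota(T)$, column $j$ contributes no sign in the one whose box $u$ contains $k_0$, and contributes $+$ in the other. So the two raw signatures differ by the insertion or deletion of a single $+$ at column $j$.

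\emph{Step 2: no minus sign to the left of column $j$.} Every box in a column $j'<j$ precedes $u$ in the column order, so it carries the singleton of $T_0$ in both tableaux. Suppose such a column contains $k_0$. Since $k_0\ge2$, the box above it lies in the skew diagram and carries $k_0-1$, so the column contributes no sign. Consequently the columns to the left of column $j$ contribute only $+$ signs or no sign, and they do so identically in $T$ and $\iota(T)$.

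\emph{Step 3: the pairing.} In the pairing procedure each $-$ is paired with the nearest unpaired $+$ to its right. By Step~2 every $-$ lies strictly to the right of column $j$. A $+$ at column $j$ therefore lies to the left of every $-$ and can never be chosen as a partner. The pairing among the remaining signs is identical in the two tableaux, so the unpaired minus signs, with their positions and order, coincide. The additional $+$ at column $j$ stays unpaired, so the numbers of unpaired plus signs differ by exactly one. The main point needing care is Step~2, namely that the column order forces all columns before $j$ to agree with $T_0$, which rules out minus signs there. Step~1 relies on the strict inequality $T_u^{\min}>k_0$ from Lemma~\ref{lem:iota-max-preservation} to locate $k_0$ within column $j$.
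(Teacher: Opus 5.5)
Your proof is correct and follows essentially the same route as the paper. The column of $u$ switches between $+$ and no sign, and the columns to its left agree with $T_0$ and so carry no minus. Hence the changed plus is never a pairing partner. Your Step~1 also makes explicit a detail the paper leaves implicit: the value $k_0$ can occur in column $j$ only in the box $u$.
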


\begin{proof}
The box immediately above $u$ belongs to the skew diagram and contains the singleton $\{k_0-1\}$. The involution changes the contribution of the tableau column containing $u$ by
\[
+\longleftrightarrow\varnothing,
\]
leaving every minus sign unchanged.

Every column strictly to the left of the column containing $u$ precedes the first discrepancy and agrees with $T_0$. Since $k_0>1$, any such column containing $k_0$ also contains $k_0-1$ and contributes no minus to the $(k_0-1)$-signature. The column containing $u$ contributes either $+$ or no sign, and every minus lies strictly to its right.

A minus is paired only with a plus to its right. Changing the contribution of the column containing $u$ leaves the pairing status of every minus unchanged, and the changed plus cannot be paired because no minus lies to its left. Hence the unpaired minus signs coincide and the numbers of unpaired plus signs differ by one.
\end{proof}

\subsection{Raising colors away from \texorpdfstring{$k_0$}{k0}}
\label{subsec:raising-away}

Lemmas~\ref{lem:e-not-before} and~\ref{lem:neighboring-signature} imply a commutation relation for every raising color except $k_0$.

\begin{theorem}
\label{thm:raising-away}
For every $1\le r<n$ with $r\ne k_0$,
\[
e_r\iota(T)=\iota e_r(T).
\]
\end{theorem}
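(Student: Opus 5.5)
The plan is to show that, for $r\ne k_0$, the operators $e_r$ and $\iota$ act on disjoint data. Concretely, I will prove that (a) $e_r(T)=0$ if and only if $e_r(\iota(T))=0$, (b) when nonzero, $e_r$ performs the same move on $T$ and on $\iota(T)$, and (c) $e_r(T)\ne T_0$ and $e_r(T)$ has the same first discrepancy $u$ with the same $k_0$. Given (a)--(c), both $e_r\iota(T)$ and $\iota e_r(T)$ are obtained from $T$ by two commuting modifications: toggling $k_0$ in $u$, and the move of $e_r$, which only inserts or removes the values $r,r+1$. I split into the generic case $r\notin\{k_0-1,k_0\}$ and the neighboring case $r=k_0-1$ (so $k_0>1$).

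\emph{Generic case.} Since $k_0\notin\{r,r+1\}$, toggling $k_0$ in $u$ changes no column's contribution to the $r$-signature. Hence $T$ and $\iota(T)$ have identical reduced $r$-signatures, which gives (a), and $e_r$ selects the same occurrence of $r+1$ in both. The transfer condition only asks whether the left neighbour contains both $r$ and $r+1$, and this is unaffected by $k_0$. So the move is the same, which gives (b).

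For (c), Lemma~\ref{lem:e-not-before} shows that the selected box does not precede $u$. A transfer source contains both $r$ and $r+1$, so it is not a singleton and therefore does not precede $u$ either. Thus every box before $u$ is unchanged and still agrees with $T_0$. If the move touches $u$, it only changes the values $r$ and $r+1$, and a transfer source keeps $r$. So $u$ can become the singleton $\{k_0\}$ only if $k_0\in\{r,r+1\}$, which is excluded. Hence $u$ remains the first discrepancy.

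\emph{Case $r=k_0-1$.} Lemma~\ref{lem:neighboring-signature} shows that the unpaired minus signs of $T$ and $\iota(T)$ coincide and appear in the same order. This gives (a) and shows that $e_r$ selects the same occurrence of $k_0$ in both tableaux. That lemma also shows that the column of $u$ contributes $+$ or no sign, so the selected box lies in a column strictly to the right of $u$; in particular it is not $u$.

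Next I must show that $u$ is not the transfer source. The box above $u$ contains $k_0-1$, so column strictness forbids $k_0-1\in T_u$ and likewise in $\iota(T)_u$. Hence $u$ never satisfies the transfer condition, and the condition at the actual left neighbour of the selected box is the same for $T$ and $\iota(T)$, which gives (b). As in the generic case, the boxes before $u$ are untouched. Since $u$ is neither the selected box nor the source, $e_r$ does not touch $u$ at all, which gives (c).

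I expect the main obstacle to be this neighboring case. There the involution does change the $(k_0-1)$-signature, and one must check carefully that neither the selected box nor the transfer source can be $u$. In particular, the transfer interaction at $u$ is where the argument could break, and the column-strictness observation about $k_0-1$ above $u$ is what rules it out.
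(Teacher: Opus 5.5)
Your proposal is correct and follows essentially the same route as the paper. It uses the same split into $r\notin\{k_0-1,k_0\}$ and $r=k_0-1$, and the same use of Lemma~\ref{lem:e-not-before} plus the multicell observation to keep the selected box and transfer source from preceding $u$; in the neighboring case it likewise relies on Lemma~\ref{lem:neighboring-signature} and on column strictness giving $k_0-1\notin T_u$, so that $u$ can be neither selected nor a transfer source. Your packaging into (a)--(c) only makes the final ``two commuting modifications'' step slightly more explicit than the paper's wording.
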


\begin{proof}
Assume first that
\[
r\notin\{k_0-1,k_0\}.
\]
The involution changes neither $r$ nor $r+1$. The tableaux $T$ and $\iota(T)$ have identical reduced $r$-signatures. The operator $e_r$ vanishes on one tableau if and only if it vanishes on the other. If the operator is nonzero, it selects the same occurrence in both tableaux.

By Lemma~\ref{lem:e-not-before}, the selected box does not precede $u$. In the transfer case, the source box immediately to its left contains both $r$ and $r+1$ and is a multicell. Since every box preceding $u$ is a singleton, the transfer source cannot precede $u$ either. No box preceding the first discrepancy is changed by $e_r$.

The box $u$ remains a discrepancy. If $u$ is the selected box, the crystal move changes only the entries $r$ and $r+1$. Neither of these is $k_0$, and the content cannot be the singleton $\{k_0\}$. If $u$ is the transfer source, it still contains $r$ after the transfer, and $r\ne k_0$. It remains the first discrepancy in both compositions.

The insertion or removal of $k_0$ does not affect the reduced $r$-signature or the transfer condition, since $k_0\notin\{r,r+1\}$. If the two operations meet in the same box, they change distinct entries. Otherwise they act on disjoint boxes. The equality
\[
e_r\iota(T)=\iota e_r(T)
\]
follows.

The remaining color is $r=k_0-1$, which requires $k_0>1$. By Lemma~\ref{lem:neighboring-signature}, $e_r$ vanishes on $T$ if and only if it vanishes on $\iota(T)$, and otherwise it selects the same occurrence in both tableaux.

The box $u$ contributes either $+$ or no sign to the $(k_0-1)$-signature and is not selected by $e_r$. It cannot be a transfer source either: the box immediately above $u$ contains the entry $k_0-1$, and column strictness excludes $k_0-1$ from $u$. The box $u$ does not contain both $k_0-1$ and $k_0$.

The selected box does not precede $u$ by Lemma~\ref{lem:e-not-before}. A transfer source is a multicell, and every box preceding $u$ is a singleton. A transfer source cannot precede $u$. The operator $e_r$ changes no box preceding $u$ and does not change $u$ itself. The first discrepancy remains $u$. Since the selected occurrence and the transfer condition are the same for $T$ and $\iota(T)$, the two operations commute.
\end{proof}

\subsection{Lowering operators and the positional obstruction}
\label{subsec:lowering-generic}

A lowering operator can select a box preceding the first discrepancy. For $r\notin\{k_0-1,k_0\}$, this is the only obstruction to commutation. Theorem~\ref{thm:raising-away} excludes only $k_0$ for raising operators.

\begin{proposition}
\label{prop:lowering-generic}
Let $1\le r<n$ with
\[
r\notin\{k_0-1,k_0\}.
\]
One has
\[
f_r\iota(T)=\iota f_r(T)
\]
if and only if either $f_r(T)=0$ or the box containing the occurrence selected by $f_r$ does not precede $u$ in the column order.
\end{proposition}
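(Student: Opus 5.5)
Since $r\notin\{k_0-1,k_0\}$, the insertion or removal of $k_0$ at $u$ leaves every column's $r$-signature contribution unchanged. Hence $T$ and $\iota(T)$ have the same reduced $r$-signature, and $f_r$ vanishes on both or on neither. The transfer condition involves only $r$ and $r+1$, so whenever $f_r$ is nonzero it selects the same occurrence and performs the same transfer or replacement on both tableaux. If $f_r(T)=0$ both sides are $0$ and the equality holds. From now on I assume $f_r(T)\neq0$, write $v$ for the selected box and $w$ for the box to its right.

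For the implication where $v$ does not precede $u$, I will argue exactly as in the first part of the proof of Theorem~\ref{thm:raising-away}. The selected box $v$ does not precede $u$ by hypothesis. A transfer source $w$ is a multicell and lies after $v$ in the column order, so it cannot precede $u$ either. Hence $f_r$ changes no box preceding $u$. If $f_r$ acts at $u$, it changes only the entries $r,r+1\neq k_0$, so $u$ cannot become the singleton $\{k_0\}$: if $k_0\in T_u$ it stays, and otherwise $u$ still lacks $k_0$. Thus $u$ remains the first discrepancy of $f_r(T)$. Similarly, $f_r$ sees the same data in $\iota(T)$, so $\iota$ and $f_r$ act on distinct entries and commute.

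For the converse, I assume that $v$ precedes $u$ and show $f_r\iota(T)\neq\iota f_r(T)$. The box $v$ is a singleton $\{r\}$ prescribed by $T_0$.
\begin{itemize}
\item[(a)] If $f_r$ is a replacement, $v$ becomes $\{r+1\}\neq\{r\}$. So $f_r(T)$ has a first discrepancy at or before $v$, strictly before $u$.
\item[(b)] If $f_r$ is a transfer, $v$ becomes $\{r,r+1\}$, again a discrepancy.
\end{itemize}
Let $u'$ be the new first discrepancy, which precedes $u$, with prescribed entry $k_0'$. On one side, $\iota f_r(T)$ toggles $k_0'$ at $u'$. On the other side, $f_r\iota(T)$ toggles $k_0$ at $u$ and then applies the same $f_r$ move. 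The boxes preceding $u$ are equal in $f_r\iota(T)$ and $f_r(T)$, while $\iota f_r(T)$ differs from $f_r(T)$ at $u'$, which precedes $u$. Hence the two results differ at $u'$. To justify this cleanly I would check that $f_r(T)\ne T_0$, which holds since $u'$ is a discrepancy, so the toggle is genuine, together with Proposition~\ref{prop:iota-coordinate}: the toggle actually changes the content of $u'$.

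The main obstacle is the bookkeeping in the converse. I must verify that $u'$ precedes $u$ and that $f_r\iota(T)$ agrees with $f_r(T)$ on every box preceding $u$. The subtle point is a transfer whose source $w$ is $u$ itself, or lies after $u$. If $w=u$, then $T_u\ni r,r+1$ and the $k_0$ toggle at $u$ does not affect the transfer. So $f_r\iota(T)$ and $f_r(T)$ differ only in whether $k_0\in$ box $u$, and they agree at $u'$, which precedes $u$. This gives the required discrepancy.
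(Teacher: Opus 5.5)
Your proposal is correct and follows the paper's proof step for step. Both arguments use three steps: identical reduced $r$-signatures and transfer conditions for $T$ and $\iota(T)$, commuting local changes when $v$ does not precede $u$, and disagreement at the new first discrepancy when $v$ precedes $u$ (that discrepancy is exactly $v$, with prescribed entry $r$). The transfer case $w=u$ that you single out cannot occur, since $k_0=(T_0)_u\ge (T_0)_v=r$ with $k_0\ne r$ and every entry of $T_u$ is at least $k_0$, so $r\notin T_u$; your treatment of it is nonetheless harmless.
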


\begin{proof}
Since $k_0\notin\{r,r+1\}$, the tableaux $T$ and $\iota(T)$ have identical reduced $r$-signatures. In particular,
\[
f_r(T)=0
\quad\Longleftrightarrow\quad
f_r(\iota(T))=0.
\]
If both sides vanish, the required equality follows from $\iota(0)=0$.

Assume $f_r(T)\ne0$, and let $v$ be the box containing the occurrence selected by $f_r$. The same box is selected in $\iota(T)$.

Suppose first that $v$ precedes $u$. Since every box preceding the first discrepancy agrees with $T_0$, one has
\[
T_v=\{r\}.
\]
Whether $f_r$ acts by replacement or by transfer, the content of $v$ changes and no longer agrees with $T_0$. Every box preceding $v$ remains unchanged. The box $v$ is the first discrepancy of $f_r(T)$.

The involution applied to $f_r(T)$ toggles the entry $r$ prescribed by $T_0$ in the box $v$. Applying $\iota$ to $T$ changes only the entry $k_0$ at $u$. Since $k_0\notin\{r,r+1\}$, this does not change the reduced $r$-signature or the transfer condition for $f_r$.
\[
(f_r\iota(T))_v=(f_r(T))_v,\qquad
(\iota f_r(T))_v\ne(f_r(T))_v.
\]
The two compositions are unequal:
\[
f_r\iota(T)\ne\iota f_r(T).
\]

Suppose next that $v$ does not precede $u$. If $v$ follows $u$, the selected box and, in the transfer case, the source box immediately to its right both follow $u$. The operator $f_r$ changes no box at or before the first discrepancy. The first discrepancy remains $u$, and the changes made by $f_r$ and $\iota$ occur in disjoint boxes. The two operations commute.

The case $v=u$ remains. In the replacement case, $f_r$ replaces $r$ by $r+1$ in $u$. Since neither $r$ nor $r+1$ is $k_0$, the box cannot become the singleton $\{k_0\}$. In the transfer case, $f_r$ retains the selected entry $r$ in $u$ and inserts $r+1$ there. The box again remains different from $\{k_0\}$. The box $u$ remains the first discrepancy.

The transfer condition for $f_r$ depends on the box immediately to the right of $u$, which is unchanged by $\iota$. The insertion or removal of $k_0$ commutes with the local change involving $r$ and $r+1$. The equality
\[
f_r\iota(T)=\iota f_r(T)
\]
follows.
\end{proof}

\begin{example}
\label{ex:lowering-obstruction}
Let $\lambda=(2,1)$, $\mu=\varnothing$, and $n=3$. For
\[
T=
\begin{ytableau}
1&3\\
2
\end{ytableau}
\]
the first discrepancy is $u=(1,2)$ and $k_0=1$. The operator $f_2$ selects the entry $2$ in $(2,1)$, which precedes $u$. The two compositions are
\[
f_2\iota(T)=
\begin{ytableau}
1&13\\
3
\end{ytableau}
\neq
\begin{ytableau}
1&3\\
23
\end{ytableau}
=\iota f_2(T).
\]
The selected box violates the positional condition in Proposition~\ref{prop:lowering-generic}.
\end{example}

The positional criterion in Proposition~\ref{prop:lowering-generic} does not extend unchanged to the neighboring color $k_0-1$. For $\lambda=(1,1)$, $\mu=\varnothing$, $n=3$, and
\[
T=
\begin{ytableau}
1\\
23
\end{ytableau},
\]
one has $u=(2,1)$ and $k_0=2$. Since the tableau column contains both $1$ and $2$, $f_1(T)=0$. On the other hand,
\[
\iota(T)=
\begin{ytableau}
1\\
3
\end{ytableau},
\qquad
f_1\iota(T)=
\begin{ytableau}
2\\
3
\end{ytableau}
\ne 0=\iota f_1(T).
\]

\subsection{The exceptional raising color}
\label{subsec:exceptional-color}

At the first discrepancy one has $k_0<n$. The argument in Lemma~\ref{lem:iota-max-preservation} shows that every entry of $T_u$ is at least $k_0$. If $k_0\notin T_u$, $T_u$ contains an entry larger than $k_0$. If $k_0\in T_u$, the inequality $T_u\ne\{k_0\}$ implies that it contains another entry, necessarily larger than $k_0$. The operator $e_{k_0}$ is defined.

By the construction of $\iota$, the tableaux in every nontrivial two-element orbit have the same first discrepancy. Among the two tableaux, one satisfies
\begin{equation}
\label{eq:k0-absent}
k_0\notin T_u.
\end{equation}
Since $\iota^2=\mathrm{id}$ and $\iota(0)=0$, the commutation relation for one tableau in the orbit is equivalent to that for the other. Assume \eqref{eq:k0-absent} throughout this subsection. If $k_0+1\notin T_u$ or the minus contributed by the column containing $u$ is paired, Lemma~\ref{lem:exceptional-easy} applies. The remaining case is treated in Theorem~\ref{thm:exceptional}.

\begin{lemma}
\label{lem:exceptional-easy}
Under \eqref{eq:k0-absent}, the equality
\[
e_{k_0}\iota(T)=\iota e_{k_0}(T)
\]
holds in either of these cases:
\begin{enumerate}
\item $k_0+1\notin T_u$.
\item $k_0+1\in T_u$, and the minus contributed by the column containing $u$ is paired in the $k_0$-signature.
\end{enumerate}
\end{lemma}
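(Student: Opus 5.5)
The plan is to compare the reduced $k_0$-signatures of $T$ and $\iota(T)$ directly, using that $\iota$ changes only the contribution of the column containing $u=(i,j)$. The first step is to record what the tableau column $j$ contains. The boxes above $u$ agree with $T_0$ and hold $1,\dots,k_0-1$. Under \eqref{eq:k0-absent} every entry of $T_u$ exceeds $k_0$. Every box below $u$ has entries larger than $T_u^{\max}$. Hence in $T$ the value $k_0$ does not occur in column $j$, while in $\iota(T)$ it occurs exactly at $u$. If $k_0+1$ occurs in column $j$, it occurs in $u$: when $k_0+1\notin T_u$ one has $T_u^{\max}\ge k_0+2$, and the boxes below are larger still. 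So column $j$ contributes no sign to $T$ and $+$ to $\iota(T)$ in case~1, and $-$ to $T$ and no sign to $\iota(T)$ in case~2.

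Next I will check that no minus lies strictly to the left of column $j$, exactly as in Lemma~\ref{lem:neighboring-signature}. Those columns agree with $T_0$, and any occurrence of $k_0+1\ge2$ there has $k_0$ directly above it, so such a column contributes no minus.

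In case~1, the new $+$ in $\iota(T)$ therefore has no minus to its left. It stays unpaired and does not change the pairing of any minus, since a minus is paired only with a $+$ to its right. So $e_{k_0}$ vanishes on $T$ if and only if it vanishes on $\iota(T)$; otherwise it selects the same occurrence, lying in a column strictly to the right of $j$. The transfer condition is the same in both tableaux. The only box whose content differs is $u$, and $u$ contains $k_0+1$ in neither $T$ nor $\iota(T)$, so it can be a transfer source in neither.

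In case~2, the minus at column $j$ is paired with a $+$ at some column $c>j$. I will use the bracket description of the pairing: the signs strictly between $j$ and $c$ are matched among themselves, and deleting a matched pair does not alter the other matchings. The unpaired signs of $T$ and $\iota(T)$ then coincide, and the selected occurrence is again the same, in a column strictly to the right of $j$.

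The main obstacle is the transfer condition in case~2. In $\iota(T)$ the box $u$ contains both $k_0$ and $k_0+1$, so $u$ would be a transfer source in $\iota(T)$ but not in $T$ if the selected box were $(i,j+1)$. I will exclude this by showing that column $j+1$ cannot carry an unpaired minus. Column $j+1$ is not $c$, since it carries a minus rather than a plus. If $c>j+1$, then column $j+1$ lies strictly between $j$ and $c$, so its sign is matched, which is again a contradiction. Hence in both cases the transfer condition agrees for $T$ and $\iota(T)$.

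Finally, by Lemma~\ref{lem:e-not-before} the selected box does not precede $u$, and it is not in column $j$, so it is not $u$. A transfer source is a multicell, so it cannot precede $u$, and by the previous paragraph it is not $u$. Thus $e_{k_0}$ leaves every box at or before $u$ unchanged, and the first discrepancy of $e_{k_0}(T)$ is still $u$ with the same $k_0$. The two operations then act on disjoint boxes, which gives $e_{k_0}\iota(T)=\iota e_{k_0}(T)$, including the case where both sides are $0$.
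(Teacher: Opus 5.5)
Your proposal is correct and follows essentially the paper's argument: no minus can occur strictly left of column $j$, so the new plus in case~1 is inert, and in case~2 the plus at column $c$ that pairs with the minus from column $j$ separates column $j$ from every unpaired minus, which rules out $u$ as a transfer source. One statement in case~2 needs correcting: the unpaired signs of $T$ and $\iota(T)$ do \emph{not} coincide, because $\iota$ deletes only the minus at column $j$, not the matched pair, so your bracket fact about deleting a matched pair does not apply directly, and the plus at column $c$ becomes unpaired in $\iota(T)$. What is true, and all that $e_{k_0}$ uses, is that the unpaired minuses coincide and appear in the same order; this follows because the minus at column $j$ is processed last, so removing it leaves every other pairing intact.
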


\begin{proof}
Suppose first that $k_0+1\notin T_u$. Under \eqref{eq:k0-absent}, the box $u$ contains neither $k_0$ nor $k_0+1$. Its minimum is at least $k_0+2$. The boxes above $u$ agree with $T_0$. Column strictness excludes $k_0$ and $k_0+1$ from the boxes below $u$. The column containing $u$ contributes no sign to the $k_0$-signature of $T$ and contributes $+$ to that of $\iota(T)$.

Every column strictly to the left agrees with $T_0$ and contributes no minus to the $k_0$-signature. The additional plus does not change any pairing. The same occurrence is selected in both tableaux, or the operator vanishes on both. In the nonzero case, the selected box and any transfer source lie after $u$. The box $u$ cannot be a transfer source because it does not contain $k_0+1$. The first discrepancy remains $u$, and the two operations commute.

Suppose next that $k_0+1\in T_u$ and that the minus contributed by the column containing $u$ is paired. Inserting $k_0$ removes this minus from the signature. The pairings of all minuses strictly to its right are unchanged, since these minuses are processed earlier. No minus lies to its left, since those columns agree with $T_0$. The reduced signatures have the same unpaired minuses in the same order. If there are no unpaired minuses, both compositions are $0$.

In the nonzero case, the same occurrence is selected strictly to the right of $u$ in both tableaux. The plus paired with the minus from the column containing $u$ lies strictly to the left of every unpaired minus. Otherwise an unpaired minus has an available plus at its pairing step, a contradiction. This plus separates the column containing $u$ from the selected column. The selected box is not immediately to the right of $u$, and $u$ cannot become a transfer source after inserting $k_0$. The selected box and any transfer source lie after $u$ and are unchanged by $\iota$. The first discrepancy remains $u$, and the two operations commute.
\end{proof}

Assume that $k_0+1\in T_u$ and that the minus contributed by its column survives in the reduced $k_0$-signature. No column strictly to the left contributes a minus, since every such column agrees with $T_0$. This minus is the leftmost unpaired minus, and $e_{k_0}$ selects the occurrence of $k_0+1$ in $u$.

The box immediately to the left of $u$, if present, precedes the first discrepancy and is a singleton. It cannot contain both $k_0$ and $k_0+1$. The operator $e_{k_0}$ acts on $T$ by replacing the selected $k_0+1$ by $k_0$. In particular, $(e_{k_0}(T))_u=\{k_0\}$ if and only if $T_u=\{k_0+1\}$.

\begin{theorem}
\label{thm:exceptional}
Assume \eqref{eq:k0-absent}, and suppose that $e_{k_0}$ selects the occurrence of $k_0+1$ in $u=(i,j)$. Put $v=(i,j+1)$. The equality
\[
e_{k_0}\iota(T)=\iota e_{k_0}(T)
\]
holds if and only if conditions \textup{(i)} and \textup{(ii)} hold.
\begin{enumerate}
\item One has $T_u=\{k_0+1\}$, the box $v$ belongs to $\lambda/\mu$, and $v$ is the first discrepancy of $e_{k_0}(T)$.
\item In $\iota(T)$, the operator $e_{k_0}$ selects the occurrence of $k_0+1$ in $v$.
\end{enumerate}
\end{theorem}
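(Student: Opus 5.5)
The plan is a direct comparison of the two compositions. Both start from $T$, where (as observed before the statement) $e_{k_0}$ acts by replacement, turning the selected $k_0+1$ in $u$ into $k_0$. So
\[
(e_{k_0}(T))_u=(T_u\setminus\{k_0+1\})\cup\{k_0\},
\]
and $e_{k_0}$ leaves every other box unchanged.

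First I will describe $e_{k_0}\iota(T)$. Inserting $k_0$ into $u$ makes the column of $u$ contain both $k_0$ and $k_0+1$, so its sign disappears. As in the proof of Lemma~\ref{lem:exceptional-easy}, this does not alter the pairing of any minus to its right. Hence the unpaired minuses of $\iota(T)$ are those of $T$ with the leftmost one, at $u$, removed. By Lemma~\ref{lem:e-not-before} and the fact that all columns left of $u$ agree with $T_0$, $e_{k_0}$ on $\iota(T)$ either vanishes or selects an occurrence of $k_0+1$ strictly to the right of the column of $u$. The box $u$ now contains both $k_0$ and $k_0+1$. So the only way $u$ can change under $e_{k_0}$ is a transfer, and that happens exactly when the selected box is $v=(i,j+1)$. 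In that case $u$ becomes $(T_u\setminus\{k_0+1\})\cup\{k_0\}$ and $k_0$ is inserted into $v$. In every other case the box $u$ of $e_{k_0}\iota(T)$ still contains $k_0+1$.

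Next I will describe $\iota e_{k_0}(T)$ by splitting on whether $T_u=\{k_0+1\}$. If $T_u\neq\{k_0+1\}$, then $(e_{k_0}(T))_u$ is not $\{k_0\}$. So $u$ stays the first discrepancy, and $\iota$ removes $k_0$, giving $T_u\setminus\{k_0+1\}$ at $u$. This box lacks $k_0$, whereas in $e_{k_0}\iota(T)$ the box $u$ contains $k_0$ (or the composition is $0$). The two compositions differ, so (i) is necessary in this case.

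If $T_u=\{k_0+1\}$, then $u$ now agrees with $T_0$, and $\iota$ acts at the first discrepancy $w$ of $e_{k_0}(T)$, which lies after $u$. Suppose the compositions are equal. The box $u$ must agree, which forces the transfer from $u$ to $v$ in $e_{k_0}\iota(T)$. That gives (ii) and $v\in\lambda/\mu$. Moreover $\iota e_{k_0}(T)$ must alter $v$, so $w=v$, giving (i). Conversely, assume (i) and (ii). Then $e_{k_0}\iota(T)$ has $u=\{k_0\}$ and $v=T_v\cup\{k_0\}$, with all other boxes as in $T$. Also $\iota e_{k_0}(T)$ toggles the $T_0$-entry $k'=i'-\mu'_{j+1}$ at $v$, where $v=(i',j+1)$ in this notation with $i'=i$.

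The main obstacle is verifying that this toggle inserts exactly $k_0$, i.e.\ that $k'=k_0$ and $k_0\notin T_v$. That $k_0\notin T_v$ follows from (ii), since $v$'s column then carries a minus, hence contains $k_0+1$ but not $k_0$.

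For $k'=k_0$, I will argue as follows. Since $\mu'_{j+1}\le\mu'_j$, we have $k'\ge k_0$. The box above $v$ lies in the skew diagram whenever $k'>1$; it precedes $v$, so in $e_{k_0}(T)$ it equals $T_0$ and holds $k'-1$. If $k'>k_0$, that box holds $k'-1\ge k_0$, and column strictness forces every entry of $v$ to be at least $k'\ge k_0+1$. On the other side, $v$ contains $k_0+1$, so $k'\le k_0+1$, and the remaining possibility is $k'=k_0+1$.

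I expect the delicate point to be excluding $k'=k_0+1$. In that case the box above $v$ holds $k_0$, so the column of $v$ contains both $k_0$ and $k_0+1$ and contributes no sign. This contradicts (ii). Hence $k'=k_0$, the toggle inserts $k_0$ into $v$, and the two compositions agree box by box.
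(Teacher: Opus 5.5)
Your proposal is correct and follows essentially the same route as the paper. Necessity comes from forcing the transfer from $u$ into $v$ in $e_{k_0}\iota(T)$ and then placing the first discrepancy of $e_{k_0}(T)$ at $v$; sufficiency comes from showing via condition \textup{(ii)} that $\iota$ inserts exactly $k_0$ into $v$. The differences are cosmetic: you rule out $T_u\ne\{k_0+1\}$ by checking whether the box $u$ contains $k_0$ rather than by the paper's excess count, and you get $k'=k_0$ from the bounds $k_0\le k'\le k_0+1$ rather than by directly exhibiting the $T_0$-entry $k_0$ above $v$ when $\mu'_j>\mu'_{j+1}$.
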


\begin{proof}
The box immediately to the left of $u$, if present, is a singleton. Since $e_{k_0}$ selects $u$, it acts on $T$ by replacement.

Assume first that
\[
e_{k_0}\iota(T)=\iota e_{k_0}(T).
\]
Since $e_{k_0}(T)\ne0$, one has $\iota e_{k_0}(T)\ne0$. The assumed equality implies $e_{k_0}\iota(T)\ne0$. Under \eqref{eq:k0-absent}, the involution inserts $k_0$, and
\[
\ex(\iota(T))=\ex(T)+1.
\]
Lemma~\ref{lem:crystal-excess} implies
\[
\ex(e_{k_0}\iota(T))=\ex(T)+1,
\qquad
\ex(e_{k_0}(T))=\ex(T).
\]

Suppose that $u$ remains the first discrepancy of $e_{k_0}(T)$. The replacement has inserted $k_0$ in this box. Since the box is still a discrepancy, it contains another entry as well. The involution removes $k_0$ from this box, and
\[
\ex(\iota e_{k_0}(T))=\ex(T)-1,
\]
a contradiction. The box $u$ agrees with $T_0$ after applying $e_{k_0}$, and
\[
(e_{k_0}(T))_u=\{k_0\},
\qquad
T_u=\{k_0+1\}.
\]

In $\iota(T)$, the box $u$ is $\{k_0,k_0+1\}$ and its column contributes no sign. Equality of the two compositions requires $e_{k_0}\iota(T)$ to remove the occurrence $k_0+1$ from $u$. Since $u$ itself is not selected, this occurs only if $u$ is the transfer source for a selected box immediately to its right. The box $v=(i,j+1)$ belongs to the skew diagram, and $e_{k_0}$ selects the occurrence of $k_0+1$ in $v$ from $\iota(T)$.

On the other side, $\iota e_{k_0}(T)$ changes the first discrepancy of $e_{k_0}(T)$. Equality forces that discrepancy to be $v$. Conditions \textup{(i)} and \textup{(ii)} follow.

Conversely, assume conditions \textup{(i)} and \textup{(ii)}. First,
\[
\mu'_j=\mu'_{j+1}.
\]
Suppose instead that $\mu'_j>\mu'_{j+1}$. Since $k_0=i-\mu'_j$, the column containing $v$ contains, strictly above $v$, the box whose entry in $T_0$ is $k_0$. By condition \textup{(i)}, $v$ is the first discrepancy of $e_{k_0}(T)$, and this box above $v$ agrees with $T_0$. The operator $e_{k_0}$ changes only $u$, and this occurrence of $k_0$ is present in the column containing $v$ in both $T$ and $\iota(T)$.

Condition \textup{(ii)} places an occurrence of $k_0+1$ in the column containing $v$. That column contains both $k_0$ and $k_0+1$ and contributes no sign to the $k_0$-signature, contrary to condition \textup{(ii)}. Thus $\mu'_j=\mu'_{j+1}$, and the entry of $T_0$ in $v$ is $k_0$.

Since the column containing $v$ contributes a minus under condition \textup{(ii)}, it contains no $k_0$. The first discrepancy $v$ of $e_{k_0}(T)$ receives $k_0$ under $\iota$. For $e_{k_0}\iota(T)$, the box $u$ contains $\{k_0,k_0+1\}$ and condition \textup{(ii)} selects the occurrence $k_0+1$ in $v$. The transfer removes $k_0+1$ from $u$ and inserts $k_0$ into $v$. These are the changes made by $\iota e_{k_0}(T)$. The two tableaux are equal.
\end{proof}

The two conditions in Theorem~\ref{thm:exceptional} have the following interpretation. Condition~\textup{(i)} places the first discrepancy of $e_{k_0}(T)$ immediately to the right of $u$. Condition~\textup{(ii)} requires the column containing $v$ to contribute the leftmost unpaired minus in the reduced $k_0$-signature of $\iota(T)$. The contents of $u$ and $v$ alone do not specify that reduced signature.

\begin{example}
\label{ex:exceptional-square}
Let $\lambda=(4,2)$, $\mu=(1)$, and $n=2$. For
\[
T=
\begin{ytableau}
\none&1&2&2\\
1&2
\end{ytableau},
\]
the first discrepancy is $u=(1,3)$ and $k_0=1$. The unpaired minuses of $T$ are in columns $3$ and $4$. The operator $e_1$ replaces the entry $2$ at $u$ by $1$, and the first discrepancy of $e_1(T)$ is $v=(1,4)$. This verifies condition~\textup{(i)} of Theorem~\ref{thm:exceptional}. In $\iota(T)$, column $3$ contains both $1$ and $2$, and column $4$ contributes the only unpaired minus. The selected occurrence lies in $v$, as required by condition~\textup{(ii)}. Figure~\ref{fig:exceptional-square} displays the replacement and transfer.

\begin{figure}[!t]
\centering
\begin{tikzpicture}[x=0.68cm,y=0.68cm,>=Stealth]
  \tikzset{
    maparrow/.style={-{Stealth[length=1.9mm]},line width=0.60pt}
  }

  \begin{scope}[shift={(0,5.22)}]
    \foreach \x/\y/\t in {2/2/1,3/2/2,4/2/2,1/1/1,2/1/2}{
      \draw (\x-0.5,\y-0.5) rectangle (\x+0.5,\y+0.5);
      \node at (\x,\y) {\scriptsize $\t$};
    }
    \node[above] at (2.5,2.68) {\small $T$};
  \end{scope}

  \begin{scope}[shift={(10.59,5.22)}]
    \foreach \x/\y/\t in {2/2/1,3/2/12,4/2/2,1/1/1,2/1/2}{
      \draw (\x-0.5,\y-0.5) rectangle (\x+0.5,\y+0.5);
      \node at (\x,\y) {\scriptsize $\t$};
    }
    \node[above] at (2.5,2.68) {\small $\iota(T)$};
  \end{scope}

  \begin{scope}[shift={(0,0)}]
    \foreach \x/\y/\t in {2/2/1,3/2/1,4/2/2,1/1/1,2/1/2}{
      \draw (\x-0.5,\y-0.5) rectangle (\x+0.5,\y+0.5);
      \node at (\x,\y) {\scriptsize $\t$};
    }
    \node[below] at (2.5,0.30) {\small $e_1(T)$};
  \end{scope}

  \begin{scope}[shift={(10.59,0)}]
    \foreach \x/\y/\t in {2/2/1,3/2/1,4/2/12,1/1/1,2/1/2}{
      \draw (\x-0.5,\y-0.5) rectangle (\x+0.5,\y+0.5);
      \node at (\x,\y) {\scriptsize $\t$};
    }
    \node[below] at (2.5,0.30) {\small $e_1\iota(T)=\iota e_1(T)$};
  \end{scope}

  \draw[maparrow] (4.94,6.72)--(10.65,6.72)
    node[midway,above=2pt] {\small $\iota$};
  \draw[maparrow] (4.94,1.50)--(10.65,1.50)
    node[midway,above=2pt] {\small $\iota$};

  \draw[maparrow] (2.50,5.31)--(2.50,2.91)
    node[midway,left=2.6mm,align=center] {\small $e_1$\\[-1pt]\scriptsize replacement};
  \draw[maparrow] (13.09,5.31)--(13.09,2.91)
    node[midway,right=2.6mm,align=center] {\small $e_1$\\[-1pt]\scriptsize transfer};
\end{tikzpicture}
\caption{The commuting square for the exceptional color $k_0=1$. The left crystal edge is a replacement and the right crystal edge is a transfer.}
\label{fig:exceptional-square}
\end{figure}
\end{example}

\FloatBarrier

\section{Rotation and the dual involution}
\label{sec:duality}

Rotation with alphabet reversal exchanges minimum and maximum entries. The Boolean structure for fibers determined by minima and the dual involution are obtained by applying this rotation to the corresponding constructions for fibers determined by maxima. Related dualities among tableau models for Grothendieck polynomials are developed in \cite{Hawkes2024}.

\subsection{Rotation and fibers determined by minima}
\label{subsec:rotation}

Fix and retain the $\ell(\lambda)\times\lambda_1$ rectangle containing $\lambda/\mu$. Pad every partition in this rectangle by zeros to length $\ell(\lambda)$.

\begin{definition}
\label{def:rotation}
For a partition $\alpha$ in the fixed rectangle, define
\[
\alpha_i^\vee
=
\lambda_1-\alpha_{\ell(\lambda)+1-i}.
\]
The skew diagram $\mu^\vee/\lambda^\vee$ is the $180^\circ$ rotation of $\lambda/\mu$. For $u=(i,j)\in\lambda/\mu$, write
\[
u^\vee=(\ell(\lambda)+1-i,\lambda_1+1-j).
\]
For $T\in\SVT_n(\lambda/\mu)$, define $\rho(T)$ on $\mu^\vee/\lambda^\vee$ by
\[
(\rho(T))_{u^\vee}
=
\{n+1-k:k\in T_u\}.
\]
\end{definition}

\begin{proposition}
\label{prop:rotation}
The map
\[
\rho:\SVT_n(\lambda/\mu)
\longrightarrow
\SVT_n(\mu^\vee/\lambda^\vee)
\]
is a bijection and preserves excess. It exchanges minimum and maximum entries according to
\begin{align}
(\rho(T))_{u^\vee}^{\min}
&=n+1-T_u^{\max},
\label{eq:rho-min}\\
(\rho(T))_{u^\vee}^{\max}
&=n+1-T_u^{\min}.
\label{eq:rho-max}
\end{align}
Applying the same construction twice in the fixed rectangle returns $T$.
\end{proposition}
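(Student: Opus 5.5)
The plan is to verify the statement box by box: first check that the rotated diagram and the rotated positions match, then check the semistandard inequalities one adjacency type at a time, and finally read off the min/max exchange and the involutivity of $\rho$.

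\emph{Shape.} In the fixed rectangle, a box $(i,j)$ lies in $\alpha$ if and only if $j\le\alpha_i$. Its image $(i',j')=(\ell(\lambda)+1-i,\lambda_1+1-j)$ therefore satisfies
\[
j'>\lambda_1-\alpha_{\ell(\lambda)+1-i'}=\alpha^\vee_{i'}.
\]
So the map $u\mapsto u^\vee$ sends the complement of $\alpha$ in the rectangle onto $\alpha^\vee$ and sends $\alpha$ onto the complement of $\alpha^\vee$. From $\mu\subseteq\lambda$ we get $\lambda^\vee\subseteq\mu^\vee$, and these sets are again partitions because reversal and complementation preserve monotonicity. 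Hence $u\mapsto u^\vee$ is a bijection $\lambda/\mu\to\mu^\vee/\lambda^\vee$. It is its own inverse, because $(\alpha^\vee)^\vee=\alpha$ and $(u^\vee)^\vee=u$.

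\emph{Semistandardness.} Rotation reverses adjacency: if $(i,j),(i,j+1)$ are horizontally adjacent in $\lambda/\mu$, then $(i,j+1)^\vee$ lies immediately to the left of $(i,j)^\vee$. Vertical pairs behave the same way, with the lower box becoming the upper one. The map $k\mapsto n+1-k$ is an order-reversing bijection of $\{1,\ldots,n\}$. It sends a nonempty set to a nonempty set of the same size, and it exchanges minimum and maximum. This exchange is exactly \eqref{eq:rho-min} and \eqref{eq:rho-max}.

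For the rotated horizontal pair, the condition to check is
\[
(\rho T)^{\max}_{(i,j+1)^\vee}\le(\rho T)^{\min}_{(i,j)^\vee},
\]
that is, $n+1-T^{\min}_{i,j+1}\le n+1-T^{\max}_{i,j}$. This is equivalent to the original inequality $T^{\max}_{i,j}\le T^{\min}_{i,j+1}$. The vertical case is identical with strict inequalities. Thus $\rho(T)\in\SVT_n(\mu^\vee/\lambda^\vee)$.

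\emph{Bijectivity and excess.} Box sizes are preserved, so $\ex(\rho(T))=\ex(T)$. Applying the same construction to $\mu^\vee/\lambda^\vee$ inside the same rectangle returns $\lambda/\mu$, since $(\alpha^\vee)^\vee=\alpha$. It returns each box to its place because $(u^\vee)^\vee=u$, and it returns each entry because $n+1-(n+1-k)=k$. So $\rho$ composed with itself is the identity, and $\rho$ is a bijection.

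The only step needing care is the shape computation: keeping track of the zero padding to length $\ell(\lambda)$, and confirming that $\alpha^\vee$ is a partition contained in the rectangle. Both are routine index checks.
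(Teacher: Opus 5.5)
Your proposal is correct and follows the same route as the paper. Rotation reverses row and column adjacency, alphabet reversal reverses the inequalities and exchanges minima with maxima while preserving box sizes, and applying the construction twice in the fixed rectangle is the identity; your explicit check that $u\mapsto u^\vee$ carries $\lambda/\mu$ onto $\mu^\vee/\lambda^\vee$ supplies a step the paper takes as part of the definition.
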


\begin{proof}
Rotation reverses the order of the boxes in every row and column, and alphabet reversal reverses numerical inequalities. Hence the weak row inequalities and strict column inequalities are preserved. Reversal of each finite set proves \eqref{eq:rho-min} and \eqref{eq:rho-max}, with every box retaining its cardinality and excess. A second application of the same rotation and alphabet reversal in the fixed rectangle returns the original tableau.
\end{proof}

\begin{example}
\label{ex:rotation}
Let $\lambda=(4,4,2)$, $\mu=(2,1)$, and $n=5$. The fixed ambient rectangle is $3\times4$, and
\[
\lambda^\vee=(2),
\qquad
\mu^\vee=(4,3,2).
\]
The shaded boxes in the diagram represent the skew diagrams $\lambda/\mu$ and $\mu^\vee/\lambda^\vee$.

\begin{center}
\begin{tikzpicture}[x=0.52cm,y=-0.52cm,line width=0.35pt,>=Stealth]
\definecolor{rotationfill}{gray}{0.88}
\definecolor{rotationline}{gray}{0.35}
\tikzset{rotarrow/.style={-{Stealth[length=1.9mm]},line width=0.60pt,draw=rotationline}}
\begin{scope}[shift={(0,0)}]
  \foreach \row/\first/\last in {0/2/3,1/1/3,2/0/1}{
    \foreach \col in {0,...,3}{
      \ifnum\col<\first\else
        \ifnum\col>\last\else
          \fill[fill=rotationfill] (\col,\row) rectangle +(1,1);
        \fi
      \fi
      \draw (\col,\row) rectangle +(1,1);
    }
  }
  \node at (2,3.75) {$\lambda/\mu$};
\end{scope}
\begin{scope}[shift={(9.0,0)}]
  \foreach \row/\first/\last in {0/2/3,1/0/2,2/0/1}{
    \foreach \col in {0,...,3}{
      \ifnum\col<\first\else
        \ifnum\col>\last\else
          \fill[fill=rotationfill] (\col,\row) rectangle +(1,1);
        \fi
      \fi
      \draw (\col,\row) rectangle +(1,1);
    }
  }
  \node at (2,3.75) {$\mu^\vee/\lambda^\vee$};
\end{scope}
\draw[rotarrow] (4.8,1.5)--(8.2,1.5)
  node[midway,above=3pt,text=rotationline] {\scriptsize $180^\circ$};
\end{tikzpicture}
\end{center}

For
\[
T=
\begin{ytableau}
\none&\none&1&2\\
\none&23&4&5\\
4&5
\end{ytableau},
\]
the transformation $T\mapsto\rho(T)$ decomposes as
\begin{center}
\begingroup
\ytableausetup{mathmode,boxsize=1.35em,centertableaux}
\begin{tikzpicture}[>=Stealth,every node/.style={inner sep=1pt}]
\tikzset{rotarrow/.style={-{Stealth[length=1.9mm]},line width=0.60pt}}
\node (A) at (0,0) {$\displaystyle
\begin{ytableau}
\none&\none&1&2\\
\none&23&4&5\\
4&5
\end{ytableau}$};
\node[above=2.2mm of A] {\small $T$};
\node (B) at (5.7,0) {$\displaystyle
\begin{ytableau}
\none&\none&5&4\\
5&4&23\\
2&1
\end{ytableau}$};
\node[above=2.2mm of B] {\small $180^\circ$ rotation};
\node (C) at (11.4,0) {$\displaystyle
\begin{ytableau}
\none&\none&1&2\\
1&2&34\\
4&5
\end{ytableau}$};
\node[above=2.2mm of C] {\small $\rho(T)$};
\draw[rotarrow] ([xshift=2.7mm]A.east)--node[midway,above=2pt] {\small $180^\circ$}([xshift=-2.7mm]B.west);
\draw[rotarrow] ([xshift=2.7mm]B.east)--node[midway,above=2pt] {\small $k\mapsto6-k$}([xshift=-2.7mm]C.west);
\end{tikzpicture}
\endgroup
\end{center}
\end{example}

For $P\in\SST_n(\lambda/\mu)$, put
\[
\mathcal F_P^{\min}
=
\{T\in\SVT_n(\lambda/\mu):T_u^{\min}=P_u\text{ for every }u\in\lambda/\mu\}.
\]
Order $\mathcal F_P^{\min}$ by inclusion in each box: $T\le U$ if $T_u\subseteq U_u$ for every $u\in\lambda/\mu$.
Define
\[
\begin{split}
\Omega^*(P)=\{((i,j),k):{}&(i,j)\in\lambda/\mu,\ P_{i,j}<k\le n,\\
&k\le P_{i,j+1}\text{ if }(i,j+1)\in\lambda/\mu,\\
&k<P_{i+1,j}\text{ if }(i+1,j)\in\lambda/\mu\}.
\end{split}
\]

Regard $Q=\rho(P)$ as a singleton tableau on the rotated skew shape. By~\eqref{eq:rho-max}, $\rho$ identifies $\mathcal F_P^{\min}$ with the maximum fiber $\mathcal F_Q^{\max}$. The correspondence
\begin{equation}
\label{eq:rotation-boolean-coordinate}
(u,k)
\longleftrightarrow
(u^\vee,n+1-k)
\end{equation}
identifies $\Omega^*(P)$ with $\Omega(Q)$. The left neighbor of $u^\vee$ corresponds to the right neighbor of $u$, and the upper neighbor corresponds to the lower neighbor. The inequalities defining the two coordinate sets are transformed into one another by $k\mapsto n+1-k$.

\begin{corollary}
\label{cor:min-fiber}
The map
\[
\Phi_P^*:\mathcal F_P^{\min}\longrightarrow2^{\Omega^*(P)},
\qquad
\Phi_P^*(T)=\{(u,k)\in\Omega^*(P):k\in T_u\},
\]
is an isomorphism of graded posets. Under this isomorphism,
\[
\operatorname{rank}(T)=\ex(T),
\qquad
|\mathcal F_P^{\min}|=2^{|\Omega^*(P)|}.
\]
\begin{equation}
\label{eq:min-fiber-weight}
\sum_{T\in\mathcal F_P^{\min}}
\beta^{\ex(T)}x^{\wt(T)}
=
x^{\wt(P)}
\prod_{(u,k)\in\Omega^*(P)}(1+\beta x_k).
\end{equation}
\end{corollary}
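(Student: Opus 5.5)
The plan is to transport Proposition~\ref{prop:max-fiber-boolean} and Corollary~\ref{cor:max-fiber-factorization} through the rotation $\rho$ of Proposition~\ref{prop:rotation}. No new combinatorial argument should be needed.

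First, with $Q=\rho(P)$ on $\mu^\vee/\lambda^\vee$, I would check that $\rho$ restricts to a bijection $\mathcal F_P^{\min}\to\mathcal F_Q^{\max}$. By \eqref{eq:rho-max}, $(\rho(T))^{\max}_{u^\vee}=n+1-T_u^{\min}$. Hence $T_u^{\min}=P_u$ for all $u$ exactly when $\rho(T)$ has maximum tableau $Q$. Since $\rho$ is an involution in the fixed rectangle, the restriction is bijective. Because $\rho$ acts on each box by the bijection $k\mapsto n+1-k$, it preserves inclusion in every box, so it is an isomorphism of posets. It also preserves excess.

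Second, I would verify the coordinate correspondence \eqref{eq:rotation-boolean-coordinate} explicitly.
\begin{itemize}
\item The condition $P_{i,j}<k\le n$ becomes $1\le n+1-k<Q_{u^\vee}$.
\item The right neighbour $(i,j+1)$ of $u$ maps to the left neighbour of $u^\vee$. Then $k\le P_{i,j+1}$ becomes $Q_{\text{left}}\le n+1-k$.
\item The lower neighbour maps to the upper neighbour. Then $k<P_{i+1,j}$ becomes $Q_{\text{up}}<n+1-k$.
\end{itemize}
So $\psi:(u,k)\mapsto(u^\vee,n+1-k)$ is a bijection $\Omega^*(P)\to\Omega(Q)$. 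Moreover, $k\in T_u$ if and only if $n+1-k\in\rho(T)_{u^\vee}$, so $\Phi_Q\circ\rho=\psi\circ\Phi_P^*$. Therefore $\Phi_P^*=\psi^{-1}\circ\Phi_Q\circ\rho$ is a composition of graded poset isomorphisms. The statements $\operatorname{rank}(T)=\ex(T)$ and $|\mathcal F_P^{\min}|=2^{|\Omega^*(P)|}$ then follow from Proposition~\ref{prop:max-fiber-boolean}, using the fact that $\rho$ preserves excess.

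Third, for \eqref{eq:min-fiber-weight} I would not push weights through $\rho$, since $\rho$ reverses the alphabet and the weight. Instead I would argue directly, as in Corollary~\ref{cor:max-fiber-factorization}. Under $\Phi_P^*$, a tableau corresponds to the singleton tableau $P$ with the chosen entries $k$ added in the boxes $u$. Each coordinate $(u,k)$ adds one entry $k$, contributing a factor $\beta x_k$. Distinct coordinates in the same box have distinct $k$, and each differs from $P_u$, so the weight is additive. The sum therefore factors as $x^{\wt(P)}\prod(1+\beta x_k)$.

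The only delicate point is the boundary bookkeeping in the second step: confirming that neighbours inside $\lambda/\mu$ correspond exactly to neighbours inside the rotated diagram within the fixed rectangle. This should be immediate from the definition of $u^\vee$.
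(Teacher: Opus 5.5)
Your proposal is correct. The first two steps are the paper's argument: restrict $\rho$ to a bijection $\mathcal F_P^{\min}\to\mathcal F_Q^{\max}$ using \eqref{eq:rho-max}, identify $\Omega^*(P)$ with $\Omega(Q)$ via \eqref{eq:rotation-boolean-coordinate}, and pull back Proposition~\ref{prop:max-fiber-boolean}. Your explicit identity $\Phi_Q\circ\rho=\psi\circ\Phi_P^*$ and the neighbour-by-neighbour check spell out what the paper only asserts.

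The genuine difference is the weighted identity \eqref{eq:min-fiber-weight}.

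\textbf{Paper's route.} The paper does push weights through $\rho$. It introduces the ring involution $\sigma(x_k)=x_{n+1-k}$, notes that $x^{\wt(\rho(T))}=\sigma(x^{\wt(T)})$, and applies $\sigma$ to \eqref{eq:max-fiber-factorization} for $Q$. Then $\sigma$ sends $x^{\wt(Q)}$ to $x^{\wt(P)}$ and each factor $1+\beta x_{n+1-k}$ to $1+\beta x_k$. So the alphabet reversal you cite as a reason to avoid transport is not an obstacle.

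\textbf{Your route.} You instead rerun the argument of Corollary~\ref{cor:max-fiber-factorization} directly on the minimum fiber. This is equally valid and avoids introducing $\sigma$, at the cost of repeating the product computation. It relies on $T_u=\{P_u\}\cup\{k:(u,k)\in\Phi_P^*(T)\}$, that is, every entry of $T_u$ other than $P_u$ is a coordinate of $\Omega^*(P)$. This follows from your second step. It is also immediate directly: for $u=(i,j)$ and $k\in T_u$ with $k>P_u$, one has $k\le T_u^{\max}\le P_{i,j+1}$ and $k\le T_u^{\max}<P_{i+1,j}$ whenever these boxes lie in $\lambda/\mu$. State it explicitly, since the factorization depends on it.

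In short, the paper's version makes the whole corollary a formal consequence of the maximum-fiber results. Yours makes the weight formula self-contained.
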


\begin{proof}
Set $Q=\rho(P)$. Proposition~\ref{prop:rotation} is a bijection from $\mathcal F_P^{\min}$ to the maximum fiber $\mathcal F_Q^{\max}$ on the rotated skew shape that preserves excess, and \eqref{eq:rotation-boolean-coordinate} identifies the Boolean coordinates. Both $\rho$ and $\rho^{-1}$ preserve inclusion in each box. The poset, rank, and cardinality statements follow from Proposition~\ref{prop:max-fiber-boolean}.

For the weighted identity, let $\sigma$ be the involution of $\mathbb Z[\beta,x_1,\ldots,x_n]$ defined by
\[
\sigma(\beta)=\beta,
\qquad
\sigma(x_k)=x_{n+1-k}.
\]
For every $T$,
\[
x^{\wt(\rho(T))}=\sigma\bigl(x^{\wt(T)}\bigr).
\]
Apply $\sigma$ to the factorization \eqref{eq:max-fiber-factorization} for the maximum fiber of $Q$. The coordinate correspondence \eqref{eq:rotation-boolean-coordinate} yields \eqref{eq:min-fiber-weight}.
\end{proof}

For straight shapes, the decomposition by fixed minimum entries appears in \cite[Proposition~8.11]{MoralesZhu2022}. Corollary~\ref{cor:min-fiber} records the Boolean order, excess grading, and monomial weights for skew shapes.

\begin{example}
\label{ex:min-fiber}
For $\lambda=(2,1)$, $\mu=\varnothing$, $n=5$, and
\[
P=\begin{ytableau}1&3\\4\end{ytableau},
\]
one has
\[
\Omega^*(P)=
\{((1,1),2),((1,1),3),((1,2),4),((1,2),5),((2,1),5)\}.
\]
The fiber has cardinality $|\mathcal F_P^{\min}|=32$, and its weighted sum is
\[
\sum_{T\in\mathcal F_P^{\min}}\beta^{\ex(T)}x^{\wt(T)}
=
x_1x_3x_4(1+\beta x_2)(1+\beta x_3)(1+\beta x_4)(1+\beta x_5)^2.
\]
\end{example}

\subsection{The dual involution}
\label{subsec:iota-star}
\begingroup
\setlength{\abovedisplayskip}{7pt}
\setlength{\belowdisplayskip}{7pt}
\setlength{\abovedisplayshortskip}{5pt}
\setlength{\belowdisplayshortskip}{5pt}

For $u=(i,j)\in\lambda/\mu$, define
\[
d^*(u)=n-\lambda'_j+i.
\]
Let $T_0^*$ be the singleton tableau
\[
(T_0^*)_u=\{d^*(u)\}.
\]
Under $\rho$, $T_0^*$ is sent to the distinguished tableau $T_0$ on the rotated skew shape.

\begin{definition}
\label{def:iota-star}
Let $\iota$ on the rotated skew shape be the involution of Definition~\ref{def:iota}. Define
\begin{equation}
\label{eq:iota-star-conjugation}
\iota^*=\rho^{-1}\circ\iota\circ\rho,
\end{equation}
and set $\iota^*(0)=0$.
\end{definition}

\begin{proposition}
\label{prop:iota-star}
The map $\iota^*$ is an involution with unique fixed point $T_0^*$. For every $T\ne T_0^*$, read the boxes from right to left by columns and, within each column, from bottom to top. Let $u=u_{\iota^*}(T)$ be the first box satisfying
\[
T_u\ne(T_0^*)_u.
\]
The tableau $\iota^*(T)$ differs from $T$ only in $u$, where it inserts or removes $d^*(u)$. Its minimum entries satisfy
\[
(\iota^*(T))_v^{\min}=T_v^{\min}
\qquad(v\in\lambda/\mu).
\]
Every nontrivial application changes excess by one.
\end{proposition}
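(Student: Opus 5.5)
The plan is to transport every assertion through the conjugation \eqref{eq:iota-star-conjugation}, using Proposition~\ref{prop:rotation} together with the properties of $\iota$ established in Subsection~\ref{subsec:iota} and Proposition~\ref{prop:iota-coordinate}.

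\emph{Step 1: the distinguished tableau.} First I will verify the claim that $\rho(T_0^*)$ is the tableau $T_0$ of the rotated shape $\mu^\vee/\lambda^\vee$. For $u=(i,j)$, the column of $u^\vee$ in the rotated diagram is the rotation of column $j$ of $\lambda/\mu$, which occupies rows $\mu'_j+1,\ldots,\lambda'_j$. Hence $u^\vee$ is at position $\lambda'_j-i+1$ from the top of its column. The entry prescribed by $T_0$ on the rotated shape is therefore $\lambda'_j-i+1$. On the other hand,
\[
n+1-d^*(u)=n+1-(n-\lambda'_j+i)=\lambda'_j-i+1,
\]
so the two agree. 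Since $\rho$ is a bijection (Proposition~\ref{prop:rotation}) and $\iota$ is an involution on $\SVT_n(\mu^\vee/\lambda^\vee)$ whose unique fixed point is $T_0$, the conjugate $\iota^*$ is an involution. Its unique fixed point is $\rho^{-1}(T_0)=T_0^*$. The convention $\iota^*(0)=0$ is compatible with this.

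\emph{Step 2: the first discrepancy.} Next I will identify the box where $\iota^*$ acts. The map $u\mapsto u^\vee$ sends column $j$ to column $\lambda_1+1-j$ and reverses the row order. Consequently, the column order on $\mu^\vee/\lambda^\vee$ (left to right, top to bottom) pulls back to the stated order on $\lambda/\mu$: right to left by columns, and bottom to top within each column. Because $\rho$ acts boxwise and injectively, $T_u\ne(T_0^*)_u$ holds exactly when $\rho(T)_{u^\vee}\ne(T_0)_{u^\vee}$. Therefore $u_\iota(\rho(T))=u_{\iota^*}(T)^\vee$.

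By Definition~\ref{def:iota}, $\iota$ toggles the entry
\[
k_0=\lambda'_j-i+1=n+1-d^*(u)
\]
in the box $u^\vee$ and leaves every other box unchanged. Pulling back by $\rho^{-1}$, this becomes the toggle of $d^*(u)$ in $u$, with all other boxes of $T$ unchanged.

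\emph{Step 3: minima and excess.} Lemma~\ref{lem:iota-max-preservation} states that $\iota$ preserves every box maximum. Combining it with \eqref{eq:rho-min} and \eqref{eq:rho-max} gives
\[
(\iota^*(T))_v^{\min}=n+1-\bigl(\iota\rho(T)\bigr)_{v^\vee}^{\max}=n+1-\rho(T)_{v^\vee}^{\max}=T_v^{\min}.
\]
Finally, $\rho$ preserves excess, and $\iota$ changes excess by exactly one on every nontrivial orbit. Hence $\ex(\iota^*(T))=\ex(T)\pm1$ for every $T\ne T_0^*$.

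\emph{Main obstacle.} The only step that is not purely formal is the index bookkeeping in Step~1. It requires that the rotated column through $u^\vee$ be exactly the reversal of column $j$ of $\lambda/\mu$, so that position-from-top in the rotated shape equals position-from-bottom in the original. This is immediate from Definition~\ref{def:rotation}, provided the fixed ambient rectangle is used consistently. Once this identification holds, the remaining statements are direct transports through $\rho$.
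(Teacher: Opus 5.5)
Your proposal is correct and follows the same route as the paper: conjugation by $\rho$, pulling the column order back to the reverse column order, alphabet reversal sending the $T_0$ entry at $u^\vee$ to $d^*(u)$, and using \eqref{eq:rho-min}--\eqref{eq:rho-max} with $\iota$'s preservation of maxima to obtain preservation of minima. The only difference is that you explicitly verify $\rho(T_0^*)=T_0$ via $n+1-d^*(u)=\lambda'_j-i+1$, which the paper asserts without computation.
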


\begin{proof}
Under $u\mapsto u^\vee$, the column order on the rotated skew diagram becomes the reverse column order on $\lambda/\mu$. Alphabet reversal sends the prescribed entry of $T_0$ at $u^\vee$ to $d^*(u)$. The stated rule follows from \eqref{eq:iota-star-conjugation}. The unique fixed point follows from conjugation, and the excess statement follows because $\rho$ preserves excess. The involution $\iota$ preserves maximum entries, and \eqref{eq:rho-min}--\eqref{eq:rho-max} convert this into preservation of minimum entries under $\iota^*$.
\end{proof}

\begin{proposition}
\label{prop:iota-star-coordinate}
Let $P\in\SST_n(\lambda/\mu)$ and $T\in\mathcal F_P^{\min}$ with $T\ne T_0^*$. If $u=u_{\iota^*}(T)$,
\[
(u,d^*(u))\in\Omega^*(P),
\]
and exactly one of
$\Phi_P^*(T)$ and $\Phi_P^*(\iota^*(T))$ contains
$(u,d^*(u))$. The tableaux $T$ and $\iota^*(T)$ agree in all other
coordinates and are adjacent in the Boolean lattice $\mathcal F_P^{\min}$. If $P\ne T_0^*$, the involution has no fixed point on $\mathcal F_P^{\min}$.
\end{proposition}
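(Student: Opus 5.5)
The plan is to transport Proposition~\ref{prop:iota-coordinate} through the rotation $\rho$, in the same way that Corollary~\ref{cor:min-fiber} was derived from Proposition~\ref{prop:max-fiber-boolean}.

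\textbf{Setup.} Set $Q=\rho(P)$, viewed as a singleton tableau on $\mu^\vee/\lambda^\vee$. By \eqref{eq:rho-max}, $\rho(T)$ lies in $\mathcal F_Q^{\max}$. Since $\rho(T_0^*)=T_0$ on the rotated shape and $\rho$ is a bijection, the hypothesis $T\ne T_0^*$ gives $\rho(T)\ne T_0$.

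\textbf{Step 1: identify the first discrepancy and its entry.} The proof of Proposition~\ref{prop:iota-star} shows that the column order on the rotated diagram corresponds, under $w\mapsto w^\vee$, to the reverse column order used for $u_{\iota^*}$. Hence $u_\iota(\rho(T))=u^\vee$ with $u=u_{\iota^*}(T)$. The same argument shows that the entry of $T_0$ at $u^\vee$ is $n+1-d^*(u)$. Then Proposition~\ref{prop:iota-coordinate} applies to $\rho(T)$ and gives
\[
(u^\vee,n+1-d^*(u))\in\Omega(Q).
\]
It also shows that $\iota$ toggles exactly this coordinate, with all other coordinates of $\Phi_Q$ unchanged.

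\textbf{Step 2: pull back to $\Omega^*(P)$.} By the correspondence \eqref{eq:rotation-boolean-coordinate}, $(u^\vee,n+1-d^*(u))$ corresponds to $(u,d^*(u))\in\Omega^*(P)$. The identification satisfies
\[
\Phi_Q(\rho(T))=\{(w^\vee,n+1-k):(w,k)\in\Phi_P^*(T)\}.
\]
This holds because $k\in T_w$ if and only if $n+1-k\in\rho(T)_{w^\vee}$. By \eqref{eq:iota-star-conjugation}, $\rho(\iota^*(T))=\iota(\rho(T))$. Therefore $\Phi_P^*(T)$ and $\Phi_P^*(\iota^*(T))$ differ exactly in the coordinate $(u,d^*(u))$, so exactly one of them contains it. Adjacency in the Boolean lattice then follows from Corollary~\ref{cor:min-fiber}. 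One can also check directly that $\iota^*(T)\in\mathcal F_P^{\min}$, since Proposition~\ref{prop:iota-star} shows that $\iota^*$ preserves minima.

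\textbf{Step 3: no fixed points.} If $P\ne T_0^*$, then $T_0^*\notin\mathcal F_P^{\min}$, because the minimum tableau of $T_0^*$ is $T_0^*$ itself. Hence every $T\in\mathcal F_P^{\min}$ satisfies $T\ne T_0^*$. The involution $\iota^*$ toggles one coordinate of every such $T$, so it changes $T$, and therefore $\iota^*$ has no fixed point on the fiber.

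The only step needing care is Step~1, namely matching $u_{\iota^*}(T)^\vee$ with $u_\iota(\rho(T))$ and $n+1-d^*(u)$ with the entry of $T_0$ at $u^\vee$. For the latter, compute $(\lambda^\vee)'$ on the rotated shape: the column $\lambda_1+1-j$ of the rotated shape starts after $\ell(\lambda)-\lambda'_j$ cells of $\lambda^\vee$. The rotated row index is $\ell(\lambda)+1-i$, so the position of $u^\vee$ in its column is $\lambda'_j+1-i$. Then
\[
n+1-(\lambda'_j+1-i)=n-\lambda'_j+i=d^*(u),
\]
as required.
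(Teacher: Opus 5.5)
Your proposal is correct and takes essentially the same route as the paper, whose proof also applies rotation to Proposition~\ref{prop:iota-coordinate}; it notes, as you do, that equivalently $\iota^*$ preserves minima and toggles only the entry $d^*(u)$ at $u$, and that Corollary~\ref{cor:min-fiber} identifies this entry with a coordinate of $\Omega^*(P)$. You also supply details the paper leaves implicit: the identification $u_\iota(\rho(T))=u^\vee$, the computation that $T_0$ has entry $n+1-d^*(u)$ at $u^\vee$, and the fixed-point-free claim for $P\ne T_0^*$.
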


\begin{proof}
Apply rotation to Proposition~\ref{prop:iota-coordinate}. Equivalently, Proposition~\ref{prop:iota-star} shows that $\iota^*$ preserves the minimum tableau and changes only the occurrence of $d^*(u)$ in the box $u$. Corollary~\ref{cor:min-fiber} identifies such optional entries with the coordinates of $\Omega^*(P)$.
\end{proof}

\begin{example}
\label{ex:rotation-duality}
For the tableau $T$ in Example~\ref{ex:running}, the rotated skew shape is again $(5,4,3)/(2,1)$. The first discrepancy for $\iota^*$ is $u=(1,5)$, and
\[
d^*(u)=4.
\]
The involution $\iota^*$ inserts $4$ in this box. Figure~\ref{fig:rotation-duality} displays the conjugation relation.

\end{example}

\endgroup
\subsection{Crystal operators under rotation}
\label{subsec:rotation-crystal}

\begin{lemma}
\label{lem:rotation-crystal}
For $1\le r<n$,
\[
\rho(e_r(T))=f_{n-r}(\rho(T)),
\qquad
\rho(f_r(T))=e_{n-r}(\rho(T)),
\]
with $\rho(0)=0$.
\end{lemma}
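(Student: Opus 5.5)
We prove the first identity directly. The second then follows formally. Applying the first identity to $\rho(T)$ on the rotated shape with color $n-r$ gives $\rho(e_{n-r}(\rho(T)))=f_r(\rho^2(T))=f_r(T)$. Applying $\rho$ once more and using $\rho^2=\mathrm{id}$ from Proposition~\ref{prop:rotation} yields $\rho(f_r(T))=e_{n-r}(\rho(T))$. Throughout, write $s=n-r$ and $T'=\rho(T)$.

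\emph{Signatures.} Rotation sends column $j$ of $\lambda/\mu$ to column $\lambda_1+1-j$ of $\mu^\vee/\lambda^\vee$, so the left-to-right order of columns is reversed. Alphabet reversal sends $r\mapsto s+1$ and $r+1\mapsto s$. Hence a column contributing $+$ (contains $r$, not $r+1$) to the $r$-signature of $T$ becomes a column contributing $-$ to the $s$-signature of $T'$, and vice versa. Columns contributing no sign still contribute none. The $s$-signature of $T'$ is therefore the $r$-signature of $T$ read backwards with all signs flipped.

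\emph{Pairing.} In $T$, each $-$ is paired with the nearest unpaired $+$ to its right, processing minuses from right to left. I claim this is the usual bracket matching of the word, with $+$ as an opening bracket placed to the right. Concretely, a $-$ at position $a$ and a $+$ at $b>a$ are paired exactly when the subword strictly between them is balanced. Once this is checked, the pairing is determined intrinsically by the word. Reversing the word and flipping signs turns $(-,+)$ pairs with $-$ on the left into $(-,+)$ pairs with $-$ on the left again, and the balance condition on the intermediate subword is preserved. So the paired positions correspond under reversal.

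I expect this identification of the greedy rule with bracket matching to be the main obstacle. The proof is a standard induction on the processing order. If a shortcut is preferred, one can instead cite that the rule in \cite{MPPS2020} is the usual tensor-product signature rule.

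\emph{Selected occurrence.} Consequently, the leftmost unpaired $-$ of $T$ corresponds to the rightmost unpaired $+$ of $T'$. In particular $e_r(T)=0$ if and only if $f_s(T')=0$, which gives the convention $\rho(0)=0$. In the nonzero case, the occurrence of $r+1$ selected by $e_r$ in box $w$ corresponds to the occurrence of $s$ selected by $f_s$ in box $w^\vee$.

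\emph{Local move.} The box to the left of $w$ rotates to the box to the right of $w^\vee$, and membership in the skew diagram is preserved. That left box contains $\{r,r+1\}$ exactly when its image contains $\{s,s+1\}$, so the transfer conditions agree.
\begin{itemize}
\item In the transfer case, $e_r$ removes $r+1$ from the left box and inserts $r$ into $w$. Under $\rho$ this is removing $s$ from the right box and inserting $s+1$ into $w^\vee$, which is exactly the transfer of $f_s$.
\item In the replacement case, $e_r$ replaces $r+1$ by $r$ in $w$. Under $\rho$ this is replacing $s$ by $s+1$ in $w^\vee$, which is the replacement of $f_s$.
\end{itemize}
All other boxes are unchanged on both sides, so $\rho(e_r(T))=f_s(\rho(T))$.
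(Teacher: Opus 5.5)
Your proof is correct and follows the paper's argument: reversing the column order and the alphabet flips and reverses the $r$-signature, the leftmost unpaired minus becomes the rightmost unpaired plus, and the left and right neighbors trade places in the transfer rule. The only differences are that you justify, via bracket matching, why the greedy pairing is invariant under reverse-and-flip (the paper takes this for granted), and that you deduce the second identity from the first using $\rho^2=\mathrm{id}$ in the fixed rectangle instead of repeating the argument.
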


\begin{proof}
Alphabet reversal sends the pair $r,r+1$ to $n-r+1,n-r$. Rotation reverses the column order. The signs in the reduced signature are interchanged and their order is reversed. The leftmost unpaired minus selected by $e_r$ becomes the rightmost unpaired plus selected by $f_{n-r}$. The replacement rule is reversed, and rotation interchanges the left and right neighbors in the transfer rule.
\end{proof}

\begin{figure}[H]
\centering
\begingroup
\ytableausetup{mathmode,boxsize=1.45em,centertableaux}
\begin{tikzpicture}[>=Stealth,every node/.style={inner sep=1pt}]
\tikzset{maparrow/.style={-{Stealth[length=1.9mm]},line width=0.60pt}}
\node (A) at (0,3.55) {$\displaystyle
\begin{ytableau}
\none&\none&1&12&2\\
\none&1&23&3\\
1&23&4
\end{ytableau}$};
\node[above=2.2mm of A] {\small $T$};
\node (B) at (7.20,3.55) {$\displaystyle
\begin{ytableau}
\none&\none&1&23&4\\
\none&2&23&4\\
3&34&4
\end{ytableau}$};
\node[above=2.2mm of B] {\small $\rho(T)$};
\node (C) at (0,0) {$\displaystyle
\begin{ytableau}
\none&\none&1&12&24\\
\none&1&23&3\\
1&23&4
\end{ytableau}$};
\node[below=2.2mm of C] {\small $\iota^*(T)$};
\node (D) at (7.20,0) {$\displaystyle
\begin{ytableau}
\none&\none&1&23&4\\
\none&2&23&4\\
13&34&4
\end{ytableau}$};
\node[below=2.2mm of D,align=center] {\small $\rho(\iota^*(T))=\iota(\rho(T))$};
\draw[maparrow] ([xshift=3mm]A.east)--node[midway,above=2pt] {\small $\rho$}([xshift=-3mm]B.west);
\draw[maparrow] ([xshift=3mm]C.east)--node[midway,above=2pt] {\small $\rho$}([xshift=-3mm]D.west);
\draw[maparrow] ([yshift=-2.8mm]A.south)--node[midway,left=2.7mm] {\small $\iota^*$}([yshift=2.8mm]C.north);
\draw[maparrow] ([yshift=-2.8mm]B.south)--node[midway,right=2.7mm] {\small $\iota$}([yshift=2.8mm]D.north);
\end{tikzpicture}
\endgroup
\caption{Rotation with alphabet reversal conjugates $\iota^*$ to $\iota$ in Example~\ref{ex:rotation-duality}.}
\label{fig:rotation-duality}
\end{figure}

Let $T\ne T_0^*$, let $u=u_{\iota^*}(T)=(i,j)$ be its first discrepancy in reverse column order, and write
\[
d^*=d^*(u).
\]
Since $i\le\lambda_j'$, one has $d^*\le n$. Suppose $d^*=1$. If $i<\lambda_j'$, the box immediately below $u$ belongs to $\lambda/\mu$, precedes $u$ in reverse column order, and contains the prescribed singleton $\{2\}$. Column strictness forces $T_u=\{1\}$, contrary to the choice of $u$. If $i=\lambda_j'$, one has $d^*=n$. The assumption $d^*=1$ forces $n=1$, and the alphabet again forces $T_u=\{1\}$. Thus $2\le d^*\le n$.

\begin{corollary}
\label{cor:iota-star-lowering}
For every $1\le r<n$ with $r\ne d^*-1$,
\[
f_r\iota^*(T)=\iota^*f_r(T).
\]
\end{corollary}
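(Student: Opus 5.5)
The plan is to obtain the corollary by conjugating Theorem~\ref{thm:raising-away} through the rotation $\rho$, with no new combinatorics.

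Set $S=\rho(T)\in\SVT_n(\mu^\vee/\lambda^\vee)$. By Proposition~\ref{prop:iota-star} and the proof of Definition~\ref{def:iota-star}, $S\ne T_0$ on the rotated shape. Its first discrepancy in column order is $u^\vee$, and the entry of $T_0$ there is $k_0=n+1-d^*$, since alphabet reversal sends $d^*(u)$ to the prescribed entry of $T_0$ at $u^\vee$. The bound $2\le d^*\le n$ established just before the corollary gives $1\le k_0\le n-1$, so $k_0$ is a legitimate color on the rotated shape. The inputs are Theorem~\ref{thm:raising-away} on the rotated skew shape, which is a skew shape of the same kind with $\SVT_n$ nonempty, together with Lemma~\ref{lem:rotation-crystal}.

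Fix $1\le r<n$ with $r\ne d^*-1$ and put $s=n-r$. Then $s\ne n-d^*+1=k_0$, so Theorem~\ref{thm:raising-away} applied to $S$ gives
\[
e_s\iota(S)=\iota e_s(S).
\]
By Lemma~\ref{lem:rotation-crystal}, $\rho f_r=e_{n-r}\rho$. Since $\rho$ is a bijection, $f_r=\rho^{-1}e_s\rho$, with the conventions $\rho(0)=0$ and $\rho^{-1}(0)=0$. Also $\iota^*=\rho^{-1}\iota\rho$ by \eqref{eq:iota-star-conjugation}. Hence
\[
f_r\iota^*(T)=\rho^{-1}e_s\rho\rho^{-1}\iota\rho(T)=\rho^{-1}e_s\iota(S)=\rho^{-1}\iota e_s(S)=\rho^{-1}\iota\rho\rho^{-1}e_s\rho(T)=\iota^*f_r(T).
\]

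When $e_s(S)=0$, the chain still holds, because $\iota(0)=0$ and $\iota^*(0)=0$. The only care needed is bookkeeping, and two points must be checked. First, the theorem has to be applied to the rotated tableau $S$ with its own $k_0$, not to $T$. Second, the identification $k_0=n+1-d^*(u)$ must be correct. For that, note that $u^\vee$ lies in row $\ell(\lambda)+1-i$, and the column of the rotated shape has $\lambda'_j-i$ boxes strictly below position $u$ in the original column. The position from the top of $u^\vee$ in its column is therefore $\lambda'_j-i+1=n+1-d^*(u)$. This is exactly the statement already used in Proposition~\ref{prop:iota-star}, so no new work is needed. The main obstacle is merely making sure the excluded color transforms as $r=d^*-1\leftrightarrow s=k_0$, and the computation $n-(d^*-1)=k_0$ confirms this.
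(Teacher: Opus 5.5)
Your proposal is correct and follows the paper's proof: conjugate by $\rho$, apply Theorem~\ref{thm:raising-away} to $\rho(T)$, whose changed entry is $k_0=n+1-d^*$, and use Lemma~\ref{lem:rotation-crystal} together with the equivalence $n-r\ne n+1-d^*\Longleftrightarrow r\ne d^*-1$. Your checks of the position of $u^\vee$ in its rotated column and of the zero conventions only spell out details the paper leaves implicit.
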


\begin{proof}
Apply Theorem~\ref{thm:raising-away} to $\rho(T)$ and use Lemma~\ref{lem:rotation-crystal}. The entry changed by $\iota$ on the rotated tableau is $n+1-d^*$, and
\[
n-r\ne n+1-d^*
\quad\Longleftrightarrow\quad
r\ne d^*-1.
\]
\end{proof}

\begin{corollary}
\label{cor:iota-star-raising}
For $1\le r<n$ with $r\notin\{d^*-1,d^*\}$,
\[
e_r\iota^*(T)=\iota^*e_r(T)
\]
if and only if either $e_r(T)=0$ or the box selected by $e_r$ does not precede $u$ in the reverse column order.
\end{corollary}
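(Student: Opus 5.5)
The plan is to deduce the statement from Proposition~\ref{prop:lowering-generic}, applied to the rotated tableau $\rho(T)$ on the skew shape $\mu^\vee/\lambda^\vee$, by conjugating with $\rho$. This parallels the derivation of Corollary~\ref{cor:iota-star-lowering} from Theorem~\ref{thm:raising-away}. Put $S=\rho(T)$. By Definition~\ref{def:iota-star}, $\rho\iota^*=\iota\rho$. By Proposition~\ref{prop:iota-star}, the first discrepancy of $S$ for $\iota$ is $u^\vee$, and the entry toggled there is $k_0'=n+1-d^*$. Set $r'=n-r$. Lemma~\ref{lem:rotation-crystal} gives $\rho e_r=f_{r'}\rho$ and $\rho e_r\iota^*=f_{r'}\rho\iota^*=f_{r'}\iota\rho$. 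Likewise, $\rho\iota^*e_r=\iota\rho e_r=\iota f_{r'}\rho$. Since $\rho$ is a bijection with $\rho(0)=0$, the equality $e_r\iota^*(T)=\iota^*e_r(T)$ is equivalent to $f_{r'}\iota(S)=\iota f_{r'}(S)$.

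Next, I check that the color hypothesis translates correctly. One has $r'\notin\{k_0'-1,k_0'\}$ if and only if $n-r\notin\{n-d^*,n+1-d^*\}$, that is, $r\notin\{d^*,d^*-1\}$. This is exactly the hypothesis. Proposition~\ref{prop:lowering-generic} then says the equality holds if and only if $f_{r'}(S)=0$ or the box selected by $f_{r'}$ in $S$ does not precede $u^\vee$ in the column order of $\mu^\vee/\lambda^\vee$.

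Finally, I translate the positional condition back to $T$. First, $f_{r'}(S)=0$ if and only if $e_r(T)=0$, by Lemma~\ref{lem:rotation-crystal}. Second, I need that the occurrence selected by $f_{r'}$ in $S$ lies in the box $w^\vee$, where $w$ is the box selected by $e_r$ in $T$. The proof of Lemma~\ref{lem:rotation-crystal} shows this: the leftmost unpaired minus of the $r$-signature of $T$ corresponds, under reversal of the column order and interchange of signs, to the rightmost unpaired plus of the $r'$-signature of $S$. The corresponding occurrence is the image of the selected $r+1$, namely the value $n-r=r'$ in box $w^\vee$. Third, $w\mapsto w^\vee$ carries the column order on $\mu^\vee/\lambda^\vee$ to the reverse column order on $\lambda/\mu$ (as in the proof of Proposition~\ref{prop:iota-star}). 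Hence $w^\vee$ precedes $u^\vee$ in the column order if and only if $w$ precedes $u$ in the reverse column order.

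The main obstacle is the second of these three points. Lemma~\ref{lem:rotation-crystal} is stated as an equality of resulting tableaux, not of selected boxes. So I will spell out the signature reversal argument from its proof to confirm that the selected occurrences correspond under $\rho$. In a transfer, a change of tableau alone would not determine which box was selected, so the equality of tableaux is not enough. Once this identification is made, the rest is formal conjugation.
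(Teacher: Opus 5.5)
Your proposal is correct and is essentially the paper's argument: the paper likewise applies Proposition~\ref{prop:lowering-generic} to $\rho(T)$, uses Lemma~\ref{lem:rotation-crystal} to translate the colors and operators, and observes that rotation turns the column order into the reverse column order. The point you flag as the main obstacle is easier than you expect, because the selected box can be read off from the tableau change alone. Under $e_r$ it is the unique box that gains $r$, in both the replacement and the transfer case, and under $f_{r'}$ it is the unique box that gains $r'+1$; so the identification of selected boxes already follows from the tableau-level equality $\rho(e_r(T))=f_{r'}(\rho(T))$.
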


\begin{proof}
Apply Proposition~\ref{prop:lowering-generic} to $\rho(T)$ and use Lemma~\ref{lem:rotation-crystal}. Rotation converts the column order on the rotated diagram into the reverse column order on the original diagram.
\end{proof}

The exceptional lowering color $d^*-1$ is the rotated form of the exceptional raising color in Theorem~\ref{thm:exceptional}. Orient each two-element $\iota^*$-orbit by requiring
\begin{equation}
\label{eq:dstar-absent}
d^*\notin T_u.
\end{equation}
If $d^*-1\notin T_u$, or if the sign corresponding to the occurrence of $d^*-1$ in the column of $u$ is canceled during the reduction of the $(d^*-1)$-signature,
\[
f_{d^*-1}\iota^*(T)=\iota^*f_{d^*-1}(T).
\]
The remaining case has $d^*-1\in T_u$, with $f_{d^*-1}$ selecting this occurrence. Put $v=(i,j-1)$ for $u=(i,j)$.

\begin{corollary}
\label{cor:iota-star-exceptional}
Under \eqref{eq:dstar-absent} and the preceding selection assumption,
\[
f_{d^*-1}\iota^*(T)=\iota^*f_{d^*-1}(T)
\]
holds if and only if conditions \textup{(i)} and \textup{(ii)} hold.
\begin{enumerate}
\item The box $v$ belongs to $\lambda/\mu$, the tableau $f_{d^*-1}(T)$ satisfies
\[
(f_{d^*-1}(T))_u=\{d^*\},
\]
and $v$ is its first discrepancy in reverse column order.
\item In $\iota^*(T)$, the box $v$ contains $d^*-1$, and the sign from this occurrence is the rightmost unpaired plus in the reduced $(d^*-1)$-signature.
\end{enumerate}
\end{corollary}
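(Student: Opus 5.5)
The plan is to deduce the statement from Theorem~\ref{thm:exceptional} applied to $\rho(T)$ on the rotated skew shape $\mu^\vee/\lambda^\vee$, translating each hypothesis and condition through $\rho$.

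\textbf{Translating the data.} By Proposition~\ref{prop:iota-star}, the first discrepancy of $\rho(T)$ for $\iota$ is $u^\vee$, and the entry changed there is $k_0=n+1-d^*$. Lemma~\ref{lem:rotation-crystal} gives $\rho f_{d^*-1}=e_{n+1-d^*}\rho=e_{k_0}\rho$. Combining this with $\rho\iota^*=\iota\rho$, the target equality $f_{d^*-1}\iota^*(T)=\iota^*f_{d^*-1}(T)$ becomes, after applying the bijection $\rho$ (and using $\rho(0)=0$),
\[
e_{k_0}\iota(\rho(T))=\iota e_{k_0}(\rho(T)).
\]

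\textbf{Checking the hypotheses.} Hypothesis \eqref{eq:dstar-absent} is exactly $k_0\notin(\rho(T))_{u^\vee}$, which is \eqref{eq:k0-absent}. The selection assumption says that $f_{d^*-1}$ selects the occurrence of $d^*-1$ in $u$. In the proof of Lemma~\ref{lem:rotation-crystal}, the rightmost unpaired plus corresponds to the leftmost unpaired minus, so $e_{k_0}$ selects the occurrence of $k_0+1=n+2-d^*$ in $u^\vee$. Thus Theorem~\ref{thm:exceptional} applies with the box $(u^\vee)+(0,1)$. Under $(i,j)\mapsto(\ell(\lambda)+1-i,\lambda_1+1-j)$, this box is $v^\vee$ with $v=(i,j-1)$.

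\textbf{Translating the conditions.} Condition~(i) of the theorem has three parts:
\begin{itemize}
\item $(\rho(T))_{u^\vee}=\{k_0+1\}$;
\item $v^\vee$ lies in the rotated diagram;
\item $v^\vee$ is the first discrepancy of $e_{k_0}\rho(T)=\rho f_{d^*-1}(T)$.
\end{itemize}
Applying $\rho^{-1}$, the first part becomes $T_u=\{d^*-1\}$. The second becomes $v\in\lambda/\mu$. The third becomes that $v$ is the first discrepancy of $f_{d^*-1}(T)$ in reverse column order.

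The stated condition~(i) is phrased instead as $(f_{d^*-1}(T))_u=\{d^*\}$, so I must check this is equivalent to $T_u=\{d^*-1\}$. The box right of $u$ precedes $u$ in reverse order, so it is a singleton and no transfer occurs. Hence $f_{d^*-1}$ replaces $d^*-1$ by $d^*$ in $u$. This is the mirror of the remark preceding Theorem~\ref{thm:exceptional}, and it gives the equivalence.

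Condition~(ii) says $e_{k_0}$ on $\iota\rho(T)=\rho\iota^*(T)$ selects the occurrence of $k_0+1$ in $v^\vee$. Translated back, this says $v$ contains $d^*-1$ in $\iota^*(T)$, and its sign is the rightmost unpaired plus of the reduced $(d^*-1)$-signature.

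\textbf{Main obstacle.} The key step is verifying that $\rho$ carries the reduced signature, together with its pairing rule, to the reversed reduced signature, so that "leftmost unpaired minus in a given column" corresponds exactly to "rightmost unpaired plus in the rotated column." I would handle this by checking the pairing directly. A minus paired with the nearest unpaired plus to its right, processed from the right, becomes, after reversal of both order and sign, a plus paired with the nearest unpaired minus to its left, processed from the left. This yields the same matching, so the unpaired sets correspond, and the rest is bookkeeping.
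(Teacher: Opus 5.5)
Your proposal is correct and follows the paper's proof. The paper likewise obtains the corollary by applying Theorem~\ref{thm:exceptional} to $\rho(T)$ and translating everything back through $\rho\iota^*=\iota\rho$ and Lemma~\ref{lem:rotation-crystal}. You also fill in two details the paper leaves implicit: that $(f_{d^*-1}(T))_u=\{d^*\}$ is equivalent to $T_u=\{d^*-1\}$, because the right neighbor of $u$ is a singleton and so $f_{d^*-1}$ acts by replacement, and that rotation with alphabet reversal carries the bracket matching of the $k_0$-signature to that of the $(d^*-1)$-signature.
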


\begin{proof}
Apply Theorem~\ref{thm:exceptional} to $\rho(T)$. The box immediately to the right of the discrepancy on the rotated diagram becomes the box immediately to the left of $u$. Alphabet reversal sends the selected value to $d^*-1$, and signature reversal sends the leftmost unpaired minus to the rightmost unpaired plus.
\end{proof}

Every nonzero ordinary crystal edge preserves tableau excess. A nontrivial application of $\iota$ or $\iota^*$ changes excess by one. Square-root crystal structures connected with Grothendieck combinatorics are developed in \cite{MarbergScrimshaw2026,MarbergTongYu2026}. $K$-theoretic crystal operators on set-valued tableaux of rectangular shape are studied in \cite{PechenikScrimshaw2022}. The involutions $\iota$ and $\iota^*$ differ from these operators: their acting positions are determined by the first discrepancy from $T_0$ or $T_0^*$.

\section{Marked Gelfand--Tsetlin patterns and decorated five-vertex states}
\label{sec:five-vertex}

For a skew shape $\lambda/\mu$, the minimum entries and extra entries of a semistandard set-valued tableau determine an interlacing chain and a set of marks. The inner partition $\mu$ is the initial partition of the chain and imposes an additional condition on the marks. The same data determine a decorated five-vertex state whose lower and upper boundaries are prescribed by $\mu$ and $\lambda$, with this condition imposed on the nontrivial bumps.

Semistandard set-valued tableaux, marked Gelfand--Tsetlin patterns, and decorated five-vertex states are in bijection, with preservation of weight and excess. The tableau and lattice signatures are compared, and the ordinary type~A crystal operators correspond under these maps. The actions of $\iota$ and $\iota^*$ on marked patterns and decorated states are described explicitly. The commutation results of Sections~\ref{sec:iota-crystal} and~\ref{sec:duality} also hold on decorated states.

\subsection{Marked Gelfand--Tsetlin patterns for skew shapes}
\label{subsec:mgt}

Skew Gelfand--Tsetlin polytopes are studied in \cite{Alexandersson2016}, and classical Gelfand--Tsetlin patterns originate in \cite{GelfandTsetlin1950}. Marked Gelfand--Tsetlin patterns for set-valued tableaux of straight shape occur in \cite{MPS2021}. Free-fermionic realizations of skew stable Grothendieck polynomials appear in \cite{Iwao2022}.

A skew diagram is a \emph{horizontal strip} if it contains at most one box in each column. For partitions $\eta\subseteq\nu$, write
\[
\eta\prec\nu
\]
if $\nu/\eta$ is a horizontal strip. Equivalently,
\[
\nu_i\ge \eta_i\ge \nu_{i+1}
\qquad(i\ge1).
\]

For $T\in\SVT_n(\lambda/\mu)$ and $0\le k\le n$, define
\[
\lambda_{\min}^{(k)}(T)
=
\mu\cup\{u\in\lambda/\mu:T_u^{\min}\le k\}.
\]
\begin{lemma}
\label{lem:min-chain}
For every $T\in\SVT_n(\lambda/\mu)$,
\[
\mu=\lambda_{\min}^{(0)}(T)
\subseteq\lambda_{\min}^{(1)}(T)
\subseteq\cdots\subseteq
\lambda_{\min}^{(n)}(T)=\lambda,
\]
and every $\lambda_{\min}^{(k)}(T)$ is a partition. \[
\lambda_{\min}^{(k-1)}(T)\prec\lambda_{\min}^{(k)}(T)
\qquad(1\le k\le n).
\]
\end{lemma}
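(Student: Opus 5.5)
The plan is to reduce everything to the minimum tableau $P$ of $T$ (Definition~\ref{def:minmax-tableaux}). By definition $\lambda_{\min}^{(k)}(T)=\mu\cup\{u\in\lambda/\mu:P_u\le k\}$, and $P\in\SST_n(\lambda/\mu)$ was shown right after Definition~\ref{def:minmax-tableaux}. So the statement is the classical fact that a semistandard tableau of skew shape $\lambda/\mu$ is the same as a chain of partitions from $\mu$ to $\lambda$ whose successive differences are horizontal strips. The argument will use only the row and column inequalities of $P$.

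First I treat the endpoints and monotonicity. Every entry of $P$ lies in $\{1,\ldots,n\}$, so no box of $\lambda/\mu$ satisfies $P_u\le0$ and $\lambda_{\min}^{(0)}(T)=\mu$. Every box satisfies $P_u\le n$, so $\lambda_{\min}^{(n)}(T)=\lambda$. Inclusion $\lambda_{\min}^{(k-1)}\subseteq\lambda_{\min}^{(k)}$ holds because the condition $P_u\le k-1$ implies $P_u\le k$.

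Next I show that each $\lambda_{\min}^{(k)}$ is a partition, that is, an order ideal of $\mathbb Z_{>0}^2$ in the English convention. Let $(i,j)\in\lambda_{\min}^{(k)}$; I must show $(i,j-1)$ and $(i-1,j)$ (when in $\mathbb Z_{>0}^2$) belong to it. If $(i,j)\in\mu$, this holds since $\mu$ is a partition. Otherwise $(i,j)\in\lambda/\mu$ with $P_{i,j}\le k$. The neighbor lies in $\lambda$ because $\lambda$ is a partition, so it is either in $\mu$ (done) or in $\lambda/\mu$. In the latter case the row inequality gives $P_{i,j-1}\le P_{i,j}\le k$, and the column inequality gives $P_{i-1,j}<P_{i,j}\le k$. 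Either way the neighbor lies in $\lambda_{\min}^{(k)}$.

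Finally I prove the horizontal strip property. The difference $\lambda_{\min}^{(k)}/\lambda_{\min}^{(k-1)}$ consists of the boxes $u\in\lambda/\mu$ with $P_u=k$. Two such boxes in one column would violate strict column increase of $P$: by convexity of the skew column, the boxes between them also lie in $\lambda/\mu$, and then $P$ strictly increases along that column segment. Hence each column contains at most one such box. Combined with both sets being partitions, this yields $\lambda^{(k-1)}_{\min}\prec\lambda^{(k)}_{\min}$, via the stated equivalence with the inequalities $\nu_i\ge\eta_i\ge\nu_{i+1}$. The only point needing care is this equivalence between "at most one box per column" and the interlacing inequalities, together with the convexity of columns in a skew shape. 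Both are routine, so I expect no real obstacle.
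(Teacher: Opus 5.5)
Your proposal is correct and follows essentially the same route as the paper. The paper also closes $\lambda_{\min}^{(k)}(T)$ under left and upper neighbors using the row and column inequalities of the minimum entries, and obtains the horizontal strip property from the strict increase of minima down columns. Your explicit treatment of the endpoints $k=0,n$ and of the convexity of skew columns only fills in details the paper leaves implicit.
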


\begin{proof}
Fix $k$. Since the minimum entries are weakly increasing along rows, the boxes whose minimum entries do not exceed $k$, together with the boxes of $\mu$, form an initial segment in each row. Suppose that $(i+1,j)\in\lambda/\mu$ belongs to $\lambda_{\min}^{(k)}(T)$. If $(i,j)\in\lambda/\mu$,
\[
T_{i,j}^{\min}
\le T_{i,j}^{\max}
< T_{i+1,j}^{\min}
\le k.
\]
If $(i,j)\in\mu$, it already belongs to $\lambda_{\min}^{(k)}(T)$. Thus $\lambda_{\min}^{(k)}(T)$ is a Young diagram.

The minimum entries are strictly increasing down columns, hence two boxes in the same column cannot have the same minimum entry. It follows that $\lambda_{\min}^{(k)}(T)/\lambda_{\min}^{(k-1)}(T)$ contains at most one box in each column and is a horizontal strip.
\end{proof}

For an extra entry $k$ in tableau row $i$, the pair $(i,k)$ records its row and value. The minimum chain determines the box containing this entry.

\begin{lemma}
\label{lem:extra-entry-position}
Suppose that $k$ is an extra entry in the box $(i,j)$ of $T\in\SVT_n(\lambda/\mu)$. One has
\begin{equation}
\label{eq:extra-position}
j=(\lambda_{\min}^{(k-1)}(T))_i.
\end{equation}
\begin{equation}
\label{eq:mark-conditions}
(\lambda_{\min}^{(k)}(T))_{i+1}
<
(\lambda_{\min}^{(k-1)}(T))_i,
\qquad
(\lambda_{\min}^{(k-1)}(T))_i>\mu_i.
\end{equation}
\end{lemma}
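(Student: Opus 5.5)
We argue directly from the row and column inequalities of Definition~\ref{def:svt}, with $\lambda^{(k)}=\lambda_{\min}^{(k)}(T)$ throughout. Let $k$ be an extra entry of $T_{i,j}$, so that
\[
T_{i,j}^{\min}<k\le T_{i,j}^{\max}.
\]
For \eqref{eq:extra-position} we determine the row $i$ part of $\lambda^{(k-1)}$. By Lemma~\ref{lem:min-chain}, this part is $\mu_i$ plus the number of boxes of row $i$ in $\lambda/\mu$ whose minimum is at most $k-1$, and these boxes form an initial segment of the row.

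Since $T_{i,j}^{\min}\le k-1$, the box $(i,j)$ lies in $\lambda^{(k-1)}$. Every box of $\lambda/\mu$ to its left has minimum at most $T_{i,j}^{\min}$, so it also lies in $\lambda^{(k-1)}$, and the boxes of $\mu$ in that row lie there by definition. If $(i,j+1)\in\lambda/\mu$, then
\[
T_{i,j+1}^{\min}\ge T_{i,j}^{\max}\ge k>k-1,
\]
so $(i,j+1)\notin\lambda^{(k-1)}$. If $(i,j+1)\notin\lambda/\mu$, then $j=\lambda_i$, and the row of $\lambda^{(k-1)}\subseteq\lambda$ ends at $j$ anyway. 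In either case $(\lambda^{(k-1)})_i=j$. The second inequality in \eqref{eq:mark-conditions} is then immediate, because $(i,j)\in\lambda/\mu$ gives $j>\mu_i$.

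For the first inequality in \eqref{eq:mark-conditions} we show that the box $(i+1,j)$ does not lie in $\lambda^{(k)}$. Since $\lambda^{(k)}$ is a partition, this forces $(\lambda^{(k)})_{i+1}<j$. There are three cases. If $(i+1,j)\notin\lambda$, it is not in $\lambda^{(k)}\subseteq\lambda$. It cannot lie in $\mu$, since $\mu$ is a partition containing $(i+1,j)$ would then contain the box $(i,j)$ above it, whereas $(i,j)\in\lambda/\mu$. If $(i+1,j)\in\lambda/\mu$, column strictness gives
\[
T_{i+1,j}^{\min}>T_{i,j}^{\max}\ge k,
\]
so its minimum exceeds $k$ and the box is not in $\lambda^{(k)}$.

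We expect no real obstacle here. The only point requiring care is the bookkeeping of the boundary cases: whether $(i,j+1)$ and $(i+1,j)$ lie outside $\lambda$ or inside $\mu$. These are handled by the facts that $\lambda^{(k)}$ is a partition contained in $\lambda$ (Lemma~\ref{lem:min-chain}) and that $\mu$ is a partition.
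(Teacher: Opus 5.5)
Your proof is correct and follows essentially the same route as the paper. You locate $(i,j)$ as the last box of row $i$ in $\lambda_{\min}^{(k-1)}(T)$ via the row inequality, read off $j>\mu_i$, and exclude $(i+1,j)$ from $\lambda_{\min}^{(k)}(T)$ by column strictness; the paper phrases this last step as a contradiction from $(\lambda_{\min}^{(k)}(T))_{i+1}\ge j$, and your explicit handling of the boundary cases is only slightly more careful than its version.
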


\begin{proof}
Since $k$ is an extra entry, the minimum of $(i,j)$ is smaller than $k$, and $(i,j)$ belongs to $\lambda_{\min}^{(k-1)}(T)$. If $(i,j+1)\in\lambda/\mu$,
\[
T_{i,j+1}^{\min}\ge T_{i,j}^{\max}\ge k.
\]
Hence no box to the right of $(i,j)$ in row $i$ belongs to $\lambda_{\min}^{(k-1)}(T)$, which proves \eqref{eq:extra-position}.

Since $(i,j)$ lies outside $\mu$, one has $j>\mu_i$, and \eqref{eq:extra-position} proves the second inequality in \eqref{eq:mark-conditions}. If $(\lambda_{\min}^{(k)}(T))_{i+1}\ge j$, the box $(i+1,j)$ lies in $\lambda/\mu$, because $(i,j)\notin\mu$, and has minimum at most $k$. Since $T_{i,j}^{\max}\ge k$, this contradicts the strict column inequality. This proves the first inequality.
\end{proof}

\begin{definition}
\label{def:marked-gt}
A \emph{marked Gelfand--Tsetlin pattern for $\lambda/\mu$} consists of an interlacing chain
\[
\mu=\lambda^{(0)}
\prec\lambda^{(1)}
\prec\cdots\prec
\lambda^{(n)}=\lambda
\]
together with a set $M$ of marks $(i,k)$, with $i\ge1$ and $1\le k\le n$, satisfying
\begin{equation}
\label{eq:restricted-mark}
\lambda_{i+1}^{(k)}<\lambda_i^{(k-1)},
\qquad
\lambda_i^{(k-1)}>\mu_i
\end{equation}
for every $(i,k)\in M$.
\end{definition}

For $\mu=\varnothing$, the second inequality in \eqref{eq:restricted-mark} is automatic, and the first is the marking condition in the correspondence for straight shapes in \cite{MPS2021}. For a general skew shape, the additional inequality requires the box associated with a mark to lie outside the inner partition $\mu$.

\begin{proposition}
\label{prop:svt-mgt}
For $T\in\SVT_n(\lambda/\mu)$, put
$\lambda^{(k)}=\lambda_{\min}^{(k)}(T)$ for $0\le k\le n$, and let $M$ be the set of pairs $(i,k)$
for which $k$ occurs as an extra entry in row $i$. The chain $\lambda^{(0)}\prec\cdots\prec\lambda^{(n)}$, together with $M$, defines a bijection between
$\SVT_n(\lambda/\mu)$ and the marked Gelfand--Tsetlin patterns for
$\lambda/\mu$. Under this bijection,
\[
\ex(T)=|M|,
\]
and, for $1\le k\le n$,
\begin{equation}
\label{eq:mgt-content}
m_k(T)
=
|\lambda^{(k)}|-|\lambda^{(k-1)}|
+
\#\{i:(i,k)\in M\}.
\end{equation}
\end{proposition}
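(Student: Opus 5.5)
The plan is to build an explicit inverse map and check that the two maps are mutually inverse. The forward direction is already mostly in place. Lemma~\ref{lem:min-chain} shows that the chain $\lambda^{(k)}=\lambda_{\min}^{(k)}(T)$ is an interlacing chain from $\mu$ to $\lambda$. Lemma~\ref{lem:extra-entry-position} shows that each mark $(i,k)\in M$ satisfies \eqref{eq:restricted-mark}. Distinct extra entries give distinct marks: two extra entries equal to $k$ in row $i$ would both lie in the box at column $(\lambda^{(k-1)})_i$ by \eqref{eq:extra-position}, and a box is a set. Hence $|M|=\ex(T)$, and the forward map lands in marked Gelfand--Tsetlin patterns.

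For the inverse, start from a pattern $(\lambda^{(\bullet)},M)$ and define $P$ by $P_u=k$ for $u\in\lambda^{(k)}/\lambda^{(k-1)}$. Since the chain interlaces, this is the standard bijection between interlacing chains and $\SST_n(\lambda/\mu)$: horizontal strips make $P$ strictly increasing down columns, and nested partitions make it weakly increasing along rows. Then form $T$ by setting
\[
T_u=\{P_u\}\cup\{k:(i,k)\in M,\ u=(i,(\lambda^{(k-1)})_i)\}.
\]
The box $(i,(\lambda^{(k-1)})_i)$ lies in $\lambda/\mu$ by the second inequality in \eqref{eq:restricted-mark}. Its minimum is at most $k-1<k$, so each mark contributes a genuine extra entry.

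The main work is semistandardness of $T$. I would reduce it to the Boolean description of minimum fibers, Corollary~\ref{cor:min-fiber}: it suffices to show that every added pair $(u,k)$ lies in $\Omega^*(P)$. Write $u=(i,j)$ with $j=(\lambda^{(k-1)})_i$. Then $P_u<k$. If $(i,j+1)\in\lambda/\mu$, that box is not in $\lambda^{(k-1)}$, so $P_{i,j+1}\ge k$. If $(i+1,j)\in\lambda/\mu$, the first inequality $\lambda^{(k)}_{i+1}<j$ gives $P_{i+1,j}>k$. These are exactly the defining inequalities of $\Omega^*(P)$, so $T\in\mathcal F_P^{\min}\subseteq\SVT_n(\lambda/\mu)$.

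The two maps are mutually inverse. The minimum tableau of $T$ is $P$, so its minimum chain is $\lambda^{(\bullet)}$, and \eqref{eq:extra-position} recovers the box of each mark. Conversely, applying the forward map and then the inverse to a tableau returns the same boxes, again by \eqref{eq:extra-position}. For the weight formula \eqref{eq:mgt-content}, the minima equal to $k$ number $|\lambda^{(k)}|-|\lambda^{(k-1)}|$. The extra entries equal to $k$ are in bijection with marks $(i,k)$, at most one per row.

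The only delicate point is the vertical condition. The mark inequality uses $\lambda^{(k)}$, not $\lambda^{(k-1)}$, so it excludes a lower minimum equal to $k$ and not merely one below $k$. This is exactly the strictness needed, and I expect checking it to be the one step that requires care.
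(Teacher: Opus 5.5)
Your proposal is correct and follows essentially the same route as the paper: the forward direction comes from Lemmas~\ref{lem:min-chain} and~\ref{lem:extra-entry-position}, and the inverse places each mark $(i,k)$ in the box $(i,\lambda_i^{(k-1)})$, checking $P_{i,j+1}\ge k$ for the right neighbor and $P_{i+1,j}>k$ for the lower neighbor. The only difference is cosmetic: you package these two checks as membership in $\Omega^*(P)$ and invoke Corollary~\ref{cor:min-fiber}, which is not circular, whereas the paper verifies the row and column inequalities directly.
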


\begin{proof}
By Lemma~\ref{lem:min-chain}, the minimum entries determine the interlacing chain. Lemma~\ref{lem:extra-entry-position} identifies the admissible mark associated with each extra entry.

Conversely, assign the minimum entry $k$ to every box of $\lambda^{(k)}/\lambda^{(k-1)}$. The inclusion chain makes the minimum entries weakly increasing along rows. If two boxes in the same column had the same minimum entry $k$, the horizontal strip $\lambda^{(k)}/\lambda^{(k-1)}$ would contain two boxes in that column, a contradiction. Thus the minimum entries are strictly increasing down columns. For every mark $(i,k)$, add the entry $k$ to the box
\begin{equation}
\label{eq:mark-box}
(i,\lambda_i^{(k-1)}).
\end{equation}
The second inequality in \eqref{eq:restricted-mark} places this box outside $\mu$. Since it belongs to $\lambda^{(k-1)}$, its minimum entry is smaller than $k$.

A box immediately to the right of \eqref{eq:mark-box}, if present, does not belong to $\lambda^{(k-1)}$ and has minimum at least $k$. Adding $k$ preserves the row inequality. The first inequality in \eqref{eq:restricted-mark} implies that a box immediately below \eqref{eq:mark-box}, if present in $\lambda/\mu$, does not belong to $\lambda^{(k)}$ and has minimum larger than $k$. The strict column inequality is also preserved.

This filling lies in $\SVT_n(\lambda/\mu)$. Its boxes with minimum entry at most $k$ are the boxes of $\lambda^{(k)}/\mu$, which recovers the original chain. Lemma~\ref{lem:extra-entry-position} recovers the box corresponding to every mark. The two constructions are inverse. Every mark contributes one entry beyond the minimum of its box, and $\ex(T)=|M|$. Formula~\eqref{eq:mgt-content} follows by separating minimum and extra occurrences.
\end{proof}

\begin{example}
\label{ex:mgt-running}
For the tableau in Example~\ref{ex:running}, the minimum entries determine the chain
\[
(2,1)
\prec
(4,2,1)
\prec
(5,3,2)
\prec
(5,4,2)
\prec
(5,4,3),
\]
and the extra entries determine the marks
\[
(1,2),\qquad (2,3),\qquad (3,3).
\]
For the mark $(2,3)$, the associated box is
\[
(2,(\lambda_{\min}^{(2)}(T))_2)=(2,3),
\]
and the inequality involving the inner partition is
\[
(\lambda_{\min}^{(2)}(T))_2=3>\mu_2=1.
\]
The four horizontal strips have sizes $4,3,1,1$. The numbers of marks of levels $1,2,3,4$ are $0,1,2,0$. \[
\wt(T)=(4,4,3,1),
\qquad
\ex(T)=3.
\]
\end{example}

\subsection{Decorated five-vertex states for skew shapes}
\label{subsec:dfv}

Horizontal arrows point to the right and vertical arrows point upward. Lattice columns are numbered from left to right by positive integers, and the left boundary of each horizontal line is placed at lattice column $0$. Figure~\ref{fig:local-states} displays the local states and their weights on horizontal line $k$.

\begin{figure}[H]
\centering
\renewcommand{\arraystretch}{1.45}
\begin{tabular}{|c|c|c|c|c|c|}
\hline
Type & $a_1$ & $a_2$ & $b_1$ & $b_2$ & $c_1$\\
\hline
{\small Configuration}
& \raisebox{0pt}[2.2em][2.2em]{\localvertex{}{}}
& \raisebox{0pt}[2.2em][2.2em]{\localvertex{\draw[mutedred,-{Stealth[length=2.0mm]}] (0,-1.28)--(0,0)--(1.28,0);}{} }
& \raisebox{0pt}[2.2em][2.2em]{\localvertex{\draw[mutedblue,-{Stealth[length=2.0mm]}] (-1.28,0)--(1.28,0);\draw[mutedblue,-{Stealth[length=2.0mm]}] (0,-1.28)--(0,1.28);}{} }
& \raisebox{0pt}[2.2em][2.2em]{\localvertex{\draw[mutedblue,-{Stealth[length=2.0mm]}] (-1.28,0)--(1.28,0);}{} }
& \raisebox{0pt}[2.2em][2.2em]{\localvertex{\draw[mutedgreen,-{Stealth[length=2.0mm]}] (-1.28,0)--(0,0)--(0,1.28);}{} }\\
\hline
Weight & $1$ & $1+\beta x_k$ & $1$ & $x_k$ & $1$\\
\hline
\end{tabular}
\caption{The five local states on horizontal line $k$. Gray segments are unoccupied.}
\label{fig:local-states}
\end{figure}

The five local states, their weights, and the decoration of an $a_2$ vertex use the convention of \cite[Section~2.3]{JNS2026}, based on the models of \cite{MotegiSakai2013,MotegiSakai2014}.

\begin{definition}[\cite{JNS2026}]
\label{def:decorated-local-state}
An \emph{admissible state} is an assignment of occupied and unoccupied edges for which every lattice vertex has one of the five local states in Figure~\ref{fig:local-states}. A \emph{decorated state} is an admissible state with a choice of one summand in the weight of every $a_2$ vertex.
\end{definition}

An $a_2$ vertex is called a \emph{bump}. The choice $1$ is a trivial bump, and the choice $\beta x_k$ on horizontal line $k$ is a nontrivial bump. Trivial bumps are marked by open circles and nontrivial bumps by filled circles. The contributing vertices are the $b_2$ vertices and the nontrivial bumps.

For an interlacing chain in Definition~\ref{def:marked-gt}, put
\begin{equation}
\label{eq:particle-position}
p_i^{(k)}
=
\lambda_i^{(k)}+\ell(\mu)+k-i+1,
\qquad
1\le i\le\ell(\mu)+k.
\end{equation}

We call $k$ the \emph{level} of $\lambda^{(k)}$ and of a mark $(i,k)$, and call the vertical arrow in column $p_i^{(k)}$ the $i$th \emph{particle} at level $k$. We use path terminology for the occupied edges: an occupied horizontal edge is a \emph{horizontal step}, a consecutive portion of occupied edges is a \emph{path segment}, and moving such a segment by one lattice column is a \emph{path shift}.

Each horizontal strip adds at most one box in column $1$, and induction from $\lambda^{(0)}=\mu$ proves
\[
\ell(\lambda^{(k)})\le \ell(\mu)+k
\qquad(0\le k\le n).
\]
Together with $\lambda_i^{(k)}\ge0$, $\lambda_i^{(k)}\le\lambda_1$, and $i\le\ell(\mu)+k$, this places every position in \eqref{eq:particle-position} between $1$ and $\lambda_1+\ell(\mu)+n$.

The interlacing inequalities are equivalent to
\begin{equation}
\label{eq:particle-interlacing}
p_i^{(k)}>p_i^{(k-1)}\ge p_{i+1}^{(k)}.
\end{equation}

\begin{lemma}
\label{lem:row-correspondence}
For every interlacing pair $\lambda^{(k-1)}\prec\lambda^{(k)}$, there is a unique admissible configuration on horizontal line $k$ whose lower and upper vertical arrows occur at the positions $p_i^{(k-1)}$ and $p_i^{(k)}$. One occupied horizontal edge enters from the left, and the right boundary edge is unoccupied. Conversely, every such row configuration determines the interlacing pair.
\end{lemma}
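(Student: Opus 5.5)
The plan is to encode a row configuration on horizontal line $k$ by its horizontal occupation function. For a lattice column $x\ge0$, let $h(x)\in\{0,1\}$ record whether the horizontal edge immediately to the right of column $x$ is occupied. The boundary conditions are $h(0)=1$, since one edge enters from the left, and $h(x)=0$ for $x\ge\lambda_1+\ell(\mu)+n$, since the right boundary edge is unoccupied.

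All five local states in Figure~\ref{fig:local-states} conserve arrows: left plus bottom equals right plus top. This gives the recursion
\[
h(x)=h(x-1)+[\text{bottom arrow at }x]-[\text{top arrow at }x].
\]
Once the lower and upper arrow positions are prescribed, $h$ is therefore forced:
\[
h(x)=1+\#\{i:p_i^{(k-1)}\le x\}-\#\{i:p_i^{(k)}\le x\}.
\]
So the configuration is unique if it exists. At $x$ large, the lower level has $\ell(\mu)+k-1$ particles and the upper level has $\ell(\mu)+k$, so $h$ tends to $0$ as required.

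For existence, I would first list the local constraints from the five allowed states in terms of the triple (left edge, bottom arrow, top arrow).
\begin{enumerate}
\item With no bottom arrow, $h(x-1)=1$ allows top present ($c_1$) or absent ($b_2$), while $h(x-1)=0$ forbids a top arrow ($a_1$).
\item With a bottom arrow and $h(x-1)=1$, the top arrow must be present ($b_1$).
\item With a bottom arrow and $h(x-1)=0$, the top arrow must be absent ($a_2$).
\end{enumerate}
Equivalently, the configuration is admissible exactly when $h$ takes values in $\{0,1\}$ and no column with $h(x-1)=0$ carries both a bottom and a top arrow.

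Next I would translate the interlacing \eqref{eq:particle-interlacing} into these conditions. Reading columns from right to left, the positions appear in the order
\[
p_1^{(k)}>p_1^{(k-1)}\ge p_2^{(k)}>p_2^{(k-1)}\ge\cdots\ge p_{\ell(\mu)+k}^{(k)}.
\]
This alternation means that, scanning leftward from the right boundary where $h=0$, the value of $h$ steps up to $1$ at each upper arrow and back down to $0$ at each lower arrow. Hence $h\in\{0,1\}$, and the final value is $h(0)=1$.

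A coincidence $p_i^{(k-1)}=p_{i+1}^{(k)}$ occurs only at a column where, going leftward, $h$ has already been raised to $1$ by $p_i^{(k)}$. So the left edge there is occupied, and the vertex is $b_1$. The strict inequality $p_i^{(k)}>p_i^{(k-1)}$ rules out the forbidden coincidence of a top arrow with a bottom arrow when $h(x-1)=0$. This produces an admissible row.

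For the converse, take any admissible row with one entering edge and an unoccupied right boundary. The same counting formula with $h\in\{0,1\}$ forces lower and upper arrows to alternate in the displayed pattern. Writing the upper positions decreasingly as $p_i^{(k)}$ and the lower positions as $p_i^{(k-1)}$, the local constraint at a shared column gives the weak versus strict inequalities exactly as in \eqref{eq:particle-interlacing}. By the stated equivalence, this recovers the interlacing pair $\lambda^{(k-1)}\prec\lambda^{(k)}$ through \eqref{eq:particle-position}.

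The main obstacle is the bookkeeping at coinciding columns. There one must check carefully that the weak inequality $\ge$ corresponds exactly to $b_1$ vertices and the strict inequality $>$ excludes the impossible bottom-to-top vertex without a horizontal edge. I would handle this by the case table above, applied column by column.
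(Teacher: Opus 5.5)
Your proof is correct and takes the paper's route. Uniqueness comes from the occupied entering edge and arrow conservation at each local state. The unoccupied right boundary comes from counting $\ell(\mu)+k-1$ lower against $\ell(\mu)+k$ upper arrows, and admissibility and the converse come from the interlacing \eqref{eq:particle-interlacing}. Your occupation function $h$ and three-case table make explicit the admissibility check that the paper asserts in one line: $h$ takes values in $\{0,1\}$, and each coincidence $p_i^{(k-1)}=p_{i+1}^{(k)}$ occurs where the left edge is occupied, giving a $b_1$ vertex.
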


\begin{proof}
Equation~\eqref{eq:particle-interlacing} interlaces the upper and lower arrow positions. At each lattice position the occupancies of the vertical edges are prescribed, and once the occupancy of the left horizontal edge is known, Figure~\ref{fig:local-states} determines the occupancy of the right horizontal edge uniquely. Equation~\eqref{eq:particle-interlacing} ensures admissibility, and the occupied left boundary edge determines the row from left to right. Each local state has the same number of occupied incoming and outgoing edges. Since there are $\ell(\mu)+k-1$ lower arrows and $\ell(\mu)+k$ upper arrows, the right boundary edge is unoccupied. Conversely, the positions of the vertical arrows of an admissible row satisfy the same interlacing inequalities, and \eqref{eq:particle-position} recovers the two partitions.
\end{proof}

\begin{lemma}
\label{lem:b2-count}
For $1\le k\le n$, horizontal line $k$ contains
\[
|\lambda^{(k)}|-|\lambda^{(k-1)}|
\]
vertices of type $b_2$.
\end{lemma}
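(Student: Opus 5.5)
We count $b_2$ vertices on horizontal line $k$ by walking the row configuration of Lemma~\ref{lem:row-correspondence} from left to right and tracking the occupied horizontal path. Put $m=\ell(\mu)+k$, write $q_i=p_i^{(k)}$ for the upper arrows ($1\le i\le m$) and $s_i=p_i^{(k-1)}$ for the lower arrows ($1\le i\le m-1$). By \eqref{eq:particle-interlacing} these alternate as
\[
q_m\le s_{m-1}<q_{m-1}\le s_{m-2}<\cdots\le s_1<q_1 .
\]
The first step is to read off the occupied horizontal edges. Enumerate the columns from the left and note that the horizontal occupancy flips at each $c_1$ (entering an upper arrow with no lower arrow, which turns the path upward) and at each $a_2$ (a lower arrow turning right with no upper arrow). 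Since the left boundary is occupied, the horizontal edges between column $0$ and $q_m$ are occupied; they become unoccupied after an upper arrow $q_i$ (if $q_i<s_{i-1}$), and are occupied again from $s_{i-1}$ to $q_{i-1}$. When $q_i=s_{i-1}$ the vertex is $b_1$ and the path continues straight through.

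The second step identifies the $b_2$ vertices as those columns $c$ with no vertical arrow whose left and right horizontal edges are both occupied. These are exactly the columns strictly between $0$ and $q_m$, together with those strictly between $s_{i-1}$ and $q_{i-1}$ for $2\le i\le m$. A $b_1$ coincidence adds nothing to this count. This gives
\[
\#b_2=(q_m-1)+\sum_{i=1}^{m-1}(q_i-s_i-1).
\]

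The third step substitutes \eqref{eq:particle-position}. We have $q_i-s_i-1=\lambda^{(k)}_i-\lambda^{(k-1)}_i$ for $i\le m-1$, and $q_m-1=\lambda^{(k)}_m$. Since $\lambda^{(k-1)}_m=0$, because $\ell(\lambda^{(k-1)})\le m-1$, the sum telescopes to $|\lambda^{(k)}|-|\lambda^{(k-1)}|$, as claimed.

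The main obstacle is the first step: verifying carefully from Figure~\ref{fig:local-states} that the occupied horizontal stretches are exactly the intervals $[s_i,q_i]$ plus the initial stretch $[0,q_m]$. The boundary cases need particular care, namely coincidences $q_{i+1}=s_i$ and the case $q_m$ at column $1$. I would handle these by induction along the row, using the uniqueness argument from Lemma~\ref{lem:row-correspondence}.
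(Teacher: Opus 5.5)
Your proposal is correct and is essentially the paper's argument: both decompose the occupied horizontal edges on line $k$ into segments running from the left boundary or a lower arrow to an upper arrow. The paper counts the vertices with an occupied left edge, $\sum_i p_i^{(k)}-\sum_i p_i^{(k-1)}$, and subtracts the $\ell(\mu)+k$ vertices of type $b_1$ or $c_1$ (those with an occupied upper edge); this equals your count of interior columns $(q_m-1)+\sum_{i=1}^{m-1}(q_i-s_i-1)$, and it absorbs the $b_1$ coincidences without separate casework.
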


\begin{proof}
The occupied horizontal edges on line $k$ decompose into segments. Their
left endpoints consist of the left boundary together with the positions of the $\ell(\mu)+k-1$ lower arrows.
Their right endpoints are the positions of the $\ell(\mu)+k$ upper arrows. With the left boundary at lattice column $0$, their total length is
\[
\sum_{i=1}^{\ell(\mu)+k}p_i^{(k)}
-\sum_{i=1}^{\ell(\mu)+k-1}p_i^{(k-1)}.
\]
The same quantity counts the vertices with an occupied left edge. Such a
vertex has type $b_1$, $b_2$, or $c_1$. The $b_1$ and $c_1$ vertices are
exactly the $\ell(\mu)+k$ vertices with an occupied upper edge. Using
\eqref{eq:particle-position},
\[
\#\{b_2\text{ vertices on line }k\}
=\sum_{i=1}^{\ell(\mu)+k}p_i^{(k)}
-\sum_{i=1}^{\ell(\mu)+k-1}p_i^{(k-1)}-(\ell(\mu)+k)
=|\lambda^{(k)}|-|\lambda^{(k-1)}|.
\]
\end{proof}

For a mark $(i,k)$, the corresponding lower vertical arrow lies in lattice column
\begin{equation}
\label{eq:mark-lattice-column}
p_i^{(k-1)}
=
\lambda_i^{(k-1)}+\ell(\mu)+k-i.
\end{equation}

\begin{lemma}
\label{lem:mark-vertex}
The vertex on horizontal line $k$ at the position $p_i^{(k-1)}$ of the lower arrow has type $a_2$ if and only if
\[
\lambda_{i+1}^{(k)}<\lambda_i^{(k-1)}.
\]
Equality corresponds to type $b_1$. The second condition in \eqref{eq:restricted-mark} is equivalent to
\begin{equation}
\label{eq:mu-boundary-bump}
p_i^{(k-1)}>
\mu_i+\ell(\mu)+k-i.
\end{equation}
\end{lemma}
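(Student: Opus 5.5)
We determine the local state at lattice column $c=p_i^{(k-1)}$ on horizontal line $k$ directly from the row reconstruction in Lemma~\ref{lem:row-correspondence}. The lower edge at $c$ is occupied, since $c$ is the position of the $i$th lower arrow. Among the five local states in Figure~\ref{fig:local-states}, only $a_2$ and $b_1$ have an occupied lower edge. They are distinguished by the upper edge: $b_1$ has an occupied upper edge and $a_2$ does not. Thus the vertex has type $b_1$ exactly when $c$ is one of the upper positions $p_{i'}^{(k)}$, and type $a_2$ otherwise.

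By \eqref{eq:particle-interlacing}, the upper positions satisfy $p_i^{(k)}>p_i^{(k-1)}\ge p_{i+1}^{(k)}$. The upper positions are also strictly decreasing in the index, since each lies in a distinct lattice column. Hence the only upper position that can equal $c$ is $p_{i+1}^{(k)}$. For $i'\le i$ one has $p_{i'}^{(k)}\ge p_i^{(k)}>c$, and for $i'\ge i+2$ one has $p_{i'}^{(k)}<p_{i+1}^{(k)}\le c$. Substituting \eqref{eq:particle-position},
\[
p_{i+1}^{(k)}=\lambda_{i+1}^{(k)}+\ell(\mu)+k-i,
\qquad
p_i^{(k-1)}=\lambda_i^{(k-1)}+\ell(\mu)+k-i,
\]
so $p_{i+1}^{(k)}=p_i^{(k-1)}$ if and only if $\lambda_{i+1}^{(k)}=\lambda_i^{(k-1)}$. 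The interlacing gives $\lambda_{i+1}^{(k)}\le\lambda_i^{(k-1)}$, so the only alternative is strict inequality. This yields the $a_2$ and $b_1$ dichotomy.

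One point needs care: the index $i+1$ must be admissible, that is, $i+1\le\ell(\mu)+k$. Since the lower arrow index satisfies $i\le\ell(\mu)+k-1$, this holds. If $\lambda_{i+1}^{(k)}=0$ because of padding, the formula still applies.

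Finally, \eqref{eq:mark-lattice-column} gives $p_i^{(k-1)}=\lambda_i^{(k-1)}+\ell(\mu)+k-i$. Subtracting $\ell(\mu)+k-i$ from both sides shows that \eqref{eq:mu-boundary-bump} is equivalent to $\lambda_i^{(k-1)}>\mu_i$, which is the second condition in \eqref{eq:restricted-mark}.

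The main obstacle is only the bookkeeping that identifies $p_{i+1}^{(k)}$ as the unique candidate upper arrow at $c$; everything else is substitution into \eqref{eq:particle-position}.
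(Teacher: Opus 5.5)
Your proof is correct and follows the paper's argument: the lower edge at $p_i^{(k-1)}$ is occupied, so the vertex is $a_2$ or $b_1$ according to whether $p_{i+1}^{(k)}$ equals $p_i^{(k-1)}$, and the condition involving $\mu$ is obtained by substituting \eqref{eq:mark-lattice-column}. You are more explicit than the paper in using interlacing to exclude every other upper arrow; the strict decrease of the $p_{i'}^{(k)}$ that this needs is better justified directly from \eqref{eq:particle-position}, namely $p_{i'}^{(k)}-p_{i'+1}^{(k)}=\lambda_{i'}^{(k)}-\lambda_{i'+1}^{(k)}+1\ge1$, than by appeal to distinct lattice columns.
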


\begin{proof}
The lower vertical edge at $p_i^{(k-1)}$ is occupied, and equation~\eqref{eq:particle-position} yields
\[
p_{i+1}^{(k)}=\lambda_{i+1}^{(k)}+\ell(\mu)+k-i.
\]
The strict inequality places $p_{i+1}^{(k)}$ strictly to the left of $p_i^{(k-1)}$, leaving the upper vertical edge at the marked column unoccupied and producing type $a_2$. Equality places an upper arrow in the same column and produces type $b_1$. Substitution of \eqref{eq:mark-lattice-column} into $\lambda_i^{(k-1)}>\mu_i$ yields \eqref{eq:mu-boundary-bump}.
\end{proof}

On horizontal line $k$, index the lower vertical arrows from right to left by
$ i=1,\ldots,\ell(\mu)+k-1$. Their positions are $p_i^{(k-1)}$.

\begin{definition}
\label{def:dfv}
A \emph{decorated five-vertex state for $\lambda/\mu$} is a decorated state on horizontal lines $1,\ldots,n$ and lattice width $\lambda_1+\ell(\mu)+n$ subject to conditions \textup{(i)}--\textup{(iii)}.
\begin{enumerate}
\item The bottom vertical arrows have positions
\[
\mu_i+\ell(\mu)-i+1
\qquad(1\le i\le\ell(\mu)),
\]
and the top vertical arrows have positions
\[
\lambda_i+\ell(\mu)+n-i+1
\qquad(1\le i\le\ell(\mu)+n).
\]
\item One occupied horizontal edge enters every horizontal line from the left, and no occupied horizontal edge exits to the right.
\item Every nontrivial bump at the $i$th lower vertical arrow on line $k$ satisfies \eqref{eq:mu-boundary-bump}.
\end{enumerate}
\end{definition}

The set of these states is denoted by $\DFV_n(\lambda/\mu)$. The first two conditions prescribe the boundary data. The third imposes the additional restriction determined by the inner partition $\mu$. It depends on the indexed lower arrow and the preceding partition $\lambda^{(k-1)}$, not only on the local vertex configuration. Thus $\DFV_n(\lambda/\mu)$ is a restricted class of decorated five-vertex states rather than a model defined solely by local Boltzmann weights. The restriction is exactly the second marking condition in Definition~\ref{def:marked-gt} under Proposition~\ref{prop:dfv-mgt}.

For $\mu=\varnothing$, the third condition is automatic, and Definition~\ref{def:dfv} reduces to the setting of decorated states for straight shapes in \cite{JNS2026}.

\begin{proposition}
\label{prop:dfv-mgt}
The positions of the vertical arrows and the nontrivial bumps define a bijection between $\DFV_n(\lambda/\mu)$ and the marked Gelfand--Tsetlin patterns for $\lambda/\mu$.
\end{proposition}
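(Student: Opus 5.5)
We construct the two directions explicitly and verify that they are mutually inverse, row by row along horizontal lines $1,\ldots,n$.

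\emph{From patterns to states.} Given a marked Gelfand--Tsetlin pattern $(\lambda^{(0)}\prec\cdots\prec\lambda^{(n)},M)$, define the particle positions $p_i^{(k)}$ by \eqref{eq:particle-position}. For each $k$, Lemma~\ref{lem:row-correspondence} yields a unique admissible configuration on horizontal line $k$. Its lower arrows sit at $p_i^{(k-1)}$, its upper arrows at $p_i^{(k)}$, one occupied edge enters from the left, and the right boundary edge is unoccupied. These rows stack consistently, since the upper arrows of line $k-1$ and the lower arrows of line $k$ both sit at the positions $p_i^{(k-1)}$. The bottom arrows are at $\mu_i+\ell(\mu)-i+1$ because $\lambda^{(0)}=\mu$ with $\ell(\mu)$ particles. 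The top arrows are at $\lambda_i+\ell(\mu)+n-i+1$ because $\lambda^{(n)}=\lambda$. The bound $p_i^{(k)}\le\lambda_1+\ell(\mu)+n$ keeps the state inside the prescribed width. This verifies conditions (i) and (ii) of Definition~\ref{def:dfv}.

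For the decoration, declare the $a_2$ vertex at $p_i^{(k-1)}$ on line $k$ a nontrivial bump exactly when $(i,k)\in M$, and let every other $a_2$ vertex be a trivial bump. By Lemma~\ref{lem:mark-vertex}, the first inequality in \eqref{eq:restricted-mark} says precisely that this vertex has type $a_2$, so the decoration is well defined. The second inequality is equivalent to \eqref{eq:mu-boundary-bump}, which gives condition (iii).

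\emph{From states to patterns.} Given a state in $\DFV_n(\lambda/\mu)$, read the occupied vertical edges between lines $k$ and $k+1$. Condition (ii) and conservation of arrows at each vertex show that there are exactly $\ell(\mu)+k$ such arrows. Order them from right to left and invert \eqref{eq:particle-position} to obtain $\lambda_i^{(k)}$.

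The converse part of Lemma~\ref{lem:row-correspondence} gives the interlacing inequalities \eqref{eq:particle-interlacing}, which translate into $\lambda^{(k-1)}\prec\lambda^{(k)}$. One must still check that each $\lambda^{(k)}$ is a partition with nonnegative parts. Strictly decreasing positions give $\lambda_i^{(k)}\ge\lambda_{i+1}^{(k)}$. Nonnegativity follows by induction on $k$ from the interlacing and the boundary data $\lambda^{(0)}=\mu$: the bottom-most particle satisfies $p_{\ell(\mu)+k}^{(k)}\ge1$, since the leftmost column is $1$. The boundary data also give $\lambda^{(0)}=\mu$ and $\lambda^{(n)}=\lambda$.

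Every $a_2$ vertex sits at a lower arrow, because its lower edge is occupied, so it is the $i$th lower arrow for some $i$. Set $M=\{(i,k)\}$, where the vertex at the $i$th lower arrow on line $k$ is a nontrivial bump. By Lemma~\ref{lem:mark-vertex} and condition (iii), these pairs satisfy both inequalities in \eqref{eq:restricted-mark}.

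\emph{Inverse maps.} Both compositions are the identity. The chain and the vertical arrow positions determine each other through \eqref{eq:particle-position}. The row configurations are unique by Lemma~\ref{lem:row-correspondence}, and the marks correspond exactly to the nontrivial bumps.

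The main obstacle is the bookkeeping in the reverse direction. The count of arrows on each line depends on $k$, and the inverted positions must be shown to form genuine partitions with nonnegative parts. The base case supplied by the bottom boundary is what makes this work, and the induction on $k$ must be set up carefully. Everything else reduces directly to Lemmas~\ref{lem:row-correspondence} and~\ref{lem:mark-vertex}.
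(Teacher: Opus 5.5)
Your proof is correct and follows the same route as the paper: both directions reduce to Lemma~\ref{lem:row-correspondence} for the row configurations and to Lemma~\ref{lem:mark-vertex} together with condition~(iii) for the marks, and you additionally spell out the arrow count and nonnegativity checks that the paper leaves implicit. One small simplification is that no induction on $k$ is needed for nonnegativity, since $p_{\ell(\mu)+k}^{(k)}\ge1$ is exactly the statement $\lambda_{\ell(\mu)+k}^{(k)}\ge0$.
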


\begin{proof}
Lemma~\ref{lem:row-correspondence} identifies the vertical arrows on consecutive levels with an interlacing pair whose bottom and top levels are $\mu$ and $\lambda$. Each nontrivial bump lies at a unique lower vertical arrow on a unique horizontal line, and Lemma~\ref{lem:mark-vertex} supplies the two inequalities for the associated mark.

Conversely, the positions in \eqref{eq:particle-position} specify the admissible row configuration on every line. A mark $(i,k)$ selects the $a_2$ vertex in column \eqref{eq:mark-lattice-column}, with its second marking condition expressed by \eqref{eq:mu-boundary-bump}. Declaring this bump nontrivial and every other bump trivial recovers the decorated state, and the two constructions are inverse.
\end{proof}

\begin{proposition}
\label{prop:dfv-svt}
There is a bijection
\[
\DFV_n(\lambda/\mu)
\longleftrightarrow
\SVT_n(\lambda/\mu).
\]
For corresponding states $V$ and tableaux $T$,
\begin{align}
\prod_{v\in V}\operatorname{wt}(v)&=\beta^{\ex(T)}x^{\wt(T)},\label{eq:dfv-weight}\\
\ex(T)&=\#\{\text{nontrivial bumps of }V\}.\label{eq:dfv-excess}
\end{align}
\end{proposition}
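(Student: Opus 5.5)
The bijection is obtained by composing Proposition~\ref{prop:dfv-mgt} with Proposition~\ref{prop:svt-mgt}. For $T\in\SVT_n(\lambda/\mu)$, take the marked Gelfand--Tsetlin pattern $(\lambda^{(0)}\prec\cdots\prec\lambda^{(n)},M)$ of Proposition~\ref{prop:svt-mgt}. Then take the decorated state $V$ whose vertical arrows on level $k$ sit at the positions $p_i^{(k)}$ of \eqref{eq:particle-position}, with nontrivial bumps exactly at the $a_2$ vertices in columns $p_i^{(k-1)}$ for $(i,k)\in M$. Both maps are bijections, so the composite is a bijection $\DFV_n(\lambda/\mu)\to\SVT_n(\lambda/\mu)$. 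The boundary condition (i) of Definition~\ref{def:dfv} corresponds to $\lambda^{(0)}=\mu$ and $\lambda^{(n)}=\lambda$. Condition (iii) corresponds to the second inequality of \eqref{eq:restricted-mark}, by Lemma~\ref{lem:mark-vertex}.

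For \eqref{eq:dfv-excess}, nontrivial bumps of $V$ are in bijection with marks. By Proposition~\ref{prop:svt-mgt}, $|M|=\ex(T)$.

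For \eqref{eq:dfv-weight}, I will compute the weight line by line. On horizontal line $k$, vertices of types $a_1$, $b_1$, $c_1$ and trivial bumps contribute $1$. Each $b_2$ vertex contributes $x_k$, and each nontrivial bump contributes $\beta x_k$. By Lemma~\ref{lem:b2-count}, line $k$ has $|\lambda^{(k)}|-|\lambda^{(k-1)}|$ vertices of type $b_2$. Its nontrivial bumps are exactly the marks $(i,k)$ of level $k$. So line $k$ contributes
\[
\beta^{\#\{i:(i,k)\in M\}}\,x_k^{|\lambda^{(k)}|-|\lambda^{(k-1)}|+\#\{i:(i,k)\in M\}} .
\]
By \eqref{eq:mgt-content}, the exponent of $x_k$ equals $m_k(T)$. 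Taking the product over $k=1,\ldots,n$ gives $\beta^{|M|}x^{\wt(T)}=\beta^{\ex(T)}x^{\wt(T)}$.

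The only point needing care is that the decoration is consistent with the weight convention: each $a_2$ vertex counted as a nontrivial bump lies on line $k$ and so carries exactly $\beta x_k$, with no double counting. No nontrivial bump can coincide with a $b_2$ vertex, since the two have different local types. Lemma~\ref{lem:mark-vertex} guarantees that every mark lands on an $a_2$ vertex rather than a $b_1$ vertex, so this step is straightforward.
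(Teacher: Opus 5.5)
Your proposal is correct and follows essentially the same route as the paper. You compose the bijections of Propositions~\ref{prop:svt-mgt} and~\ref{prop:dfv-mgt}, then compute the weight line by line using Lemma~\ref{lem:b2-count} for the $b_2$ vertices, the identification of nontrivial bumps on line $k$ with marks of level $k$, and formula~\eqref{eq:mgt-content} for the exponent of $x_k$, with the excess identity coming from $|M|=\ex(T)$.
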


\begin{proof}
The bijections in Propositions~\ref{prop:svt-mgt} and~\ref{prop:dfv-mgt} identify tableaux and decorated states with the same marked Gelfand--Tsetlin patterns. On line $k$, Lemma~\ref{lem:b2-count} counts $|\lambda^{(k)}|-|\lambda^{(k-1)}|$ vertices of type $b_2$, each of weight $x_k$. This equals the number of boxes whose minimum entry is $k$. A mark $(i,k)$ records one extra occurrence of $k$, and the corresponding nontrivial bump has weight $\beta x_k$. All other local weights are $1$. Formula~\eqref{eq:mgt-content} determines the exponent of $x_k$, and the number of marks is $\ex(T)$. Equations~\eqref{eq:dfv-weight} and \eqref{eq:dfv-excess} follow.
\end{proof}

\begin{example}
\label{ex:dfv-running}
For the tableau in Example~\ref{ex:running}, the particle positions are
\[
\begin{array}{c|l}
k&\{p_i^{(k)}\}\\
\hline
0&\{4,2\}\\
1&\{7,4,2\}\\
2&\{9,6,4,1\}\\
3&\{10,8,5,2,1\}\\
4&\{11,9,7,3,2,1\}
\end{array}.
\]
They determine the state in Figure~\ref{fig:running-dfv}. The filled circles are the three nontrivial bumps corresponding to the marks $(1,2),(2,3),(3,3)$.

\begin{figure}[H]
\centering
\begin{tikzpicture}[x=0.72cm,y=0.68cm,>=Stealth,line width=0.62pt]
  \foreach \x in {1,...,11}{\draw[faintgray] (\x,0)--(\x,4); \node[above] at (\x,4.05) {\scriptsize \x};}
  \foreach \y in {0.5,1.5,2.5,3.5}{\draw[faintgray] (0.25,\y)--(11.55,\y);}
  \foreach \y/\k in {0.5/1,1.5/2,2.5/3,3.5/4}{\node[left] at (0.12,\y) {\small $k=\k$};}
  \foreach \xa/\xb/\yy in {0.25/7/0.5,0.25/1/1.5,2/6/1.5,7/9/1.5,0.25/2/2.5,4/5/2.5,6/8/2.5,9/10/2.5,0.25/3/3.5,5/7/3.5,8/9/3.5,10/11/3.5}{
    \draw[pathgray,-{Stealth[length=1.55mm]}] (\xa,\yy)--(\xb,\yy);
  }
  \foreach \x in {2,4}{\draw[pathgray,-{Stealth[length=1.55mm]}] (\x,0)--(\x,0.5);}
  \foreach \x in {2,4,7}{\draw[pathgray,-{Stealth[length=1.55mm]}] (\x,0.5)--(\x,1.5);}
  \foreach \x in {1,4,6,9}{\draw[pathgray,-{Stealth[length=1.55mm]}] (\x,1.5)--(\x,2.5);}
  \foreach \x in {1,2,5,8,10}{\draw[pathgray,-{Stealth[length=1.55mm]}] (\x,2.5)--(\x,3.5);}
  \foreach \x in {1,2,3,7,9,11}{\draw[pathgray,-{Stealth[length=1.55mm]}] (\x,3.5)--(\x,4);}
  \foreach \x/\y in {2/1.5,9/2.5,5/3.5,8/3.5,10/3.5}{
    \fill[white] (\x,\y) circle (2.5pt); \draw[pathgray] (\x,\y) circle (2.5pt);
  }
  \foreach \x/\y in {7/1.5,4/2.5,6/2.5}{\fill[pathgray] (\x,\y) circle (2.6pt);}
\end{tikzpicture}
\caption{The decorated state corresponding to Example~\ref{ex:running}. Filled circles are nontrivial bumps and open circles are trivial bumps.}
\label{fig:running-dfv}
\end{figure}

The contributing vertices on the four horizontal lines are
\[
\begin{array}{c|c|c|c}
k&b_2\text{ columns}&\text{nontrivial bump columns}&\text{line weight}\\
\hline
1&1,3,5,6&\varnothing&x_1^4\\
2&3,5,8&7&\beta x_2^4\\
3&7&4,6&\beta^2x_3^3\\
4&6&\varnothing&x_4
\end{array},
\]
whose product is
\[
\beta^3x_1^4x_2^4x_3^3x_4
=
\beta^{\ex(T)}x^{\wt(T)}.
\]
\end{example}

Summation of \eqref{eq:dfv-weight} over $V\in\DFV_n(\lambda/\mu)$, together with \eqref{eq:grothendieck}, yields
\[
\sum_{V\in\DFV_n(\lambda/\mu)}
\prod_{v\in V}\operatorname{wt}(v)
=
G_{\lambda/\mu}(x_1,\ldots,x_n;\beta).
\]

\subsection{Crystal operators on decorated states}
\label{subsec:dfv-crystal}

The lattice $r$-signature and the marked Gelfand--Tsetlin pattern define operators on $\DFV_n(\lambda/\mu)$. Crystal structures on five-vertex ice models are studied in \cite{LorcaEspiroVolk2017}. Proposition~\ref{prop:signature-compatibility} compares the lattice and tableau signatures, and Theorem~\ref{thm:crystal-intertwining} identifies these operators with the ordinary type~A crystal operators on $\SVT_n(\lambda/\mu)$. For $\mu=\varnothing$, these operators specialize to the transformations for straight shapes in \cite[Theorem~5.3]{JNS2026}. For a nonempty inner partition, the lower boundary and the restriction \eqref{eq:mu-boundary-bump} require additional verification.

\begin{lemma}
\label{lem:entry-lattice-coordinate}
Let $k\in T_{i,j}$. The contributing vertex corresponding to this occurrence lies on horizontal line $k$ in lattice column
\begin{equation}
\label{eq:entry-lattice-column}
j-i+\ell(\mu)+k.
\end{equation}
If $k=T_{i,j}^{\min}$, this vertex is of type $b_2$. If $k>T_{i,j}^{\min}$, it is a nontrivial bump.
\end{lemma}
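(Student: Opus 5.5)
The plan is to split into the two cases $k=T_{i,j}^{\min}$ and $k>T_{i,j}^{\min}$, and in each to compute the relevant lattice column from the marked Gelfand--Tsetlin pattern of Proposition~\ref{prop:svt-mgt}, with $\lambda^{(k)}=\lambda_{\min}^{(k)}(T)$. Throughout I use the particle positions \eqref{eq:particle-position} and the row structure from Lemma~\ref{lem:row-correspondence}.

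\emph{Extra entries.} Suppose $k>T_{i,j}^{\min}$. Then $(i,k)\in M$, and Lemma~\ref{lem:extra-entry-position} gives $j=\lambda_i^{(k-1)}$. The nontrivial bump attached to this mark lies at the $i$th lower arrow on line $k$. By \eqref{eq:mark-lattice-column} its column is
\[
p_i^{(k-1)}=\lambda_i^{(k-1)}+\ell(\mu)+k-i=j-i+\ell(\mu)+k,
\]
which is \eqref{eq:entry-lattice-column}. Lemma~\ref{lem:mark-vertex}, together with \eqref{eq:mark-conditions}, shows that this vertex has type $a_2$. Under Proposition~\ref{prop:dfv-mgt} it is decorated as nontrivial, so this case is immediate.

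\emph{Minimum entries.} Suppose $k=T_{i,j}^{\min}$. Then $(i,j)\in\lambda^{(k)}/\lambda^{(k-1)}$, so $\lambda_i^{(k-1)}<j\le\lambda_i^{(k)}$. I will pin down the $b_2$ vertices on line $k$ lying in the $i$th occupied segment. That segment ends at the upper arrow $p_i^{(k)}$. It starts at the lower arrow $p_{i-1}^{(k-1)}$, or at the left boundary when $i=1$; this matches the indexing in the proof of Lemma~\ref{lem:b2-count}, since lower arrows are indexed right to left.

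The interior columns of this segment carry horizontal steps with both vertical edges empty, so they are exactly the $b_2$ vertices. The vertex at the left end is $a_2$ or $b_1$, and the vertex at the right end is $c_1$ or $b_1$. I need to check that the interior columns are
\[
p_i^{(k)}-1,\ p_i^{(k)}-2,\ \ldots,\ p_i^{(k)}-(\lambda_i^{(k)}-\lambda_i^{(k-1)}).
\]
From \eqref{eq:particle-position}, $p_i^{(k)}-p_i^{(k-1)}=\lambda_i^{(k)}-\lambda_i^{(k-1)}+1$, and $p_i^{(k-1)}$ is the left end of the next segment, strictly left of $p_i^{(k)}$. So I must use the segment structure carefully: the segment ending at $p_i^{(k)}$ actually begins at $p_i^{(k-1)}$, the nearest lower arrow to its left, by the interlacing \eqref{eq:particle-interlacing}. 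Its interior then has $\lambda_i^{(k)}-\lambda_i^{(k-1)}$ columns, namely $p_i^{(k-1)}+1,\ldots,p_i^{(k)}-1$. Assigning them in order to the boxes $j=\lambda_i^{(k-1)}+1,\ldots,\lambda_i^{(k)}$, the box $(i,j)$ receives column
\[
p_i^{(k-1)}+(j-\lambda_i^{(k-1)})=j-i+\ell(\mu)+k,
\]
as claimed.

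The main obstacle is the boundary segment when $\lambda_i^{(k-1)}$ is the new row $i=\ell(\mu)+k$, where no lower arrow $p_i^{(k-1)}$ exists. There I would use the left boundary at column $0$, and the formula still holds because $\lambda_i^{(k-1)}=0$ gives the formal value $\ell(\mu)+k-i=0$. I would verify this index bookkeeping against Example~\ref{ex:dfv-running}. For instance, at $k=1$ the $b_2$ columns are $1,3,5,6$, matching the boxes $(3,1),(2,2),(1,3),(1,4)$ whose minimum entry is $1$.
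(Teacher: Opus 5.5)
Your proof is correct and follows the paper's argument. The extra-entry case goes through Lemma~\ref{lem:extra-entry-position} and \eqref{eq:mark-lattice-column}. The minimum-entry case locates the box among the $\lambda_i^{(k)}-\lambda_i^{(k-1)}$ interior $b_2$ columns of the segment from $p_i^{(k-1)}$ to $p_i^{(k)}$, which you spell out in more detail than the paper's one-line appeal to \eqref{eq:particle-position}.

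Delete your first description of that segment (starting at $p_{i-1}^{(k-1)}$, or at the left boundary when $i=1$), since it is wrong. Your corrected version is the right one: the segment starts at $p_i^{(k-1)}$, and the left boundary serves as the formal position $p_{\ell(\mu)+k}^{(k-1)}=0$.
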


\begin{proof}
If $k=T_{i,j}^{\min}$, $(i,j)$ belongs to $\lambda_{\min}^{(k)}(T)/\lambda_{\min}^{(k-1)}(T)$, and \eqref{eq:particle-position} places the corresponding horizontal step in column \eqref{eq:entry-lattice-column}. If $k>T_{i,j}^{\min}$, Lemma~\ref{lem:extra-entry-position} yields $j=(\lambda_{\min}^{(k-1)}(T))_i$, and \eqref{eq:mark-lattice-column} specifies the same column. The first occurrence is represented by a $b_2$ vertex and the second by a nontrivial bump.
\end{proof}

\begin{definition}
\label{def:lattice-signature}
For $1\le r<n$, the lattice $r$-signature uses horizontal lines $r$ and $r+1$. A contributing vertex on line $r$ contributes $+$, and a contributing vertex on line $r+1$ contributes $-$. First, a minus from a nontrivial bump is paired with an available plus one lattice column to its left, if such a plus exists. Among the signs not paired in this way, process the minuses from right to left and pair each with the nearest available plus weakly to its right. The unpaired signs form the reduced $r$-signature.
\end{definition}

\begin{proposition}
\label{prop:signature-compatibility}
For corresponding $T$ and $V$, the reduced tableau and lattice $r$-signatures have the same unpaired occurrences. The occurrence selected by $e_r$ or $f_r$ in $T$ corresponds to the selected vertex of $V$.
\end{proposition}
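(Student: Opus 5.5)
The plan is to translate both signatures into words in $\{+,-\}$ and show that the lattice word is obtained from the tableau word by inserting pairs that cancel immediately. Inserting such pairs changes neither the reduced word nor the identity of the unpaired occurrences.

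The dictionary is Lemma~\ref{lem:entry-lattice-coordinate}: an occurrence of $k\in T_{i,j}$ sits on line $k$ in lattice column $c=j-i+\ell(\mu)+k$.

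\emph{Step 1: monotonicity on each line.} Fix $r$. The occurrences of $r$ in $T$ lie in distinct tableau columns, and their rows weakly decrease as $j$ increases. This holds because $\lambda^{(r)}_{\min}/\lambda^{(r-1)}_{\min}$ is a horizontal strip, together with the placement of extras given by Lemma~\ref{lem:extra-entry-position}. Hence $j-i$ is strictly increasing, and the $+$ signs on line $r$ appear in the same left-to-right order as in the tableau. The same argument applies to the $-$ signs on line $r+1$.

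\emph{Step 2: columns containing both values.} Suppose tableau column $j$ contains both $r$ and $r+1$. There are two cases.
\begin{enumerate}
\item Both values lie in the same box $(i,j)$. Then $r+1$ is an extra entry, giving a nontrivial bump on line $r+1$ in column $c+1$, and $r$ contributes a $+$ in column $c$ on line $r$. This is exactly the preliminary pairing in Definition~\ref{def:lattice-signature}.
\item The value $r$ lies in $(i,j)$ and $r+1$ lies in $(i+1,j)$. The formula puts both vertices in the same lattice column $c$. On line $r$ there is at most one contributing vertex per lattice column, so no other plus lies weakly right of this minus and closer to it. The minus is therefore paired with this plus, provided the plus is still available when the minus is processed.
\end{enumerate}
In both cases the pair is a "$-$ immediately followed by $+$" in the merged reading order, once ties in a single lattice column are ordered with the minus first. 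Deleting all such pairs should leave exactly the tableau signature, with signs carried by the same occurrences.

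\emph{Step 3: order between the two lines.} This is the main obstacle. A $+$ from tableau column $j$ and a $-$ from a different tableau column $j'$ could appear in the opposite relative order in lattice columns, because $i$ differs between the two lines. I would control this using the particle interlacing \eqref{eq:particle-interlacing} on lines $r$ and $r+1$. The key claim to establish is that any such inversion occurs only between a sign and a canceled pair, or between two signs whose relative order is irrelevant for the bracketing (for instance two signs of the same type). If direct interlacing does not suffice, I would fall back on an induction over the tableau columns of the sign word, carrying the invariant that the lattice bracketing and the tableau bracketing leave the same set of unmatched occurrences.

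\emph{Step 4: selected occurrences.} Given the word comparison, the reduced words coincide and have the same unpaired occurrences, in the same order. It follows that $e_r$ selects the leftmost unpaired $-$ in both models and $f_r$ selects the rightmost unpaired $+$ in both models, which proves the second assertion.
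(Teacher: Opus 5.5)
You have the same overall plan as the paper: discard the within-column pairs, then compare the remaining words. But the step you label the main obstacle is left unproved, and it is the heart of the proposition. What is needed is not that inversions are harmless but that, after the within-column pairs are discarded, there are none among the surviving signs. An inversion between a $+$ and a $-$ does change the bracketing, and by your Step~1 two signs of the same type cannot be inverted anyway.

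The paper gets this by a direct comparison. Take surviving occurrences in boxes $(i,j)$ and $(i',j')$ with $j<j'$, carrying values $a,b\in\{r,r+1\}$. Suppose $i<i'$. Then the box $(i,j')$ lies in $\lambda/\mu$, and the row and column inequalities give $a\le T_{i,j'}^{\min}\le T_{i,j'}^{\max}\le b-(i'-i)$. This forces $a=r$, $b=r+1$, $i'=i+1$ and $T_{i,j'}=\{r\}$. So column $j'$ contains both values and carries no sign, a contradiction. Hence $i\ge i'$.

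By Lemma~\ref{lem:entry-lattice-coordinate} the two lattice columns then differ by $(j'-j)+(i-i')+\delta$. Here $\delta=0$, $1$, or $-1$ according as the values are equal, $r$ then $r+1$, or $r+1$ then $r$. This difference is positive. In the last case equality would require $j'=j+1$ and $i'=i$, which the row inequality forbids. Strictness matters: the lattice rule pairs with a plus \emph{weakly} to the right, so a tie between surviving signs of opposite type would create a spurious pairing.

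Step~2 also has a hole. You show that a same-box pair is eligible for the preliminary lattice pairing, but not the converse. A nontrivial-bump minus in lattice column $c+1$ could a priori be preliminarily paired with a plus in column $c$ from a \emph{different} box of the same content $j-i$. That would cancel a sign that survives in the tableau. You must exclude this. Two such boxes are strictly northwest and southeast of each other, and the rectangle between them lies in $\lambda/\mu$. If the $r$ is northwest, the minimum of the other box exceeds $r$, so its $r+1$ is not extra. If the $r$ is southeast, it would have to exceed $r+1$.

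The same fact discharges your proviso ``provided the plus is still available'' in case~2. The plus in $(i,j)$ cannot be taken by the preliminary rule, and every minus processed earlier lies strictly to its right.

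Finally, your summary that both cases produce a ``$-$ immediately followed by $+$'' is wrong for case~1. There the plus lies in lattice column $c$ and the minus in column $c+1$: a plus followed by a minus, which ordinary bracketing would not cancel. That is exactly why the preliminary rule exists. Those pairs are removed by that rule, not as inserted cancelling brackets.
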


\begin{proof}
By Lemma~\ref{lem:entry-lattice-coordinate}, an occurrence in $(i,j)$ lies in lattice column
\[
j-i+\ell(\mu)+r
\]
on line $r$, and in lattice column
\[
j-i+\ell(\mu)+r+1
\]
on line $r+1$.
First consider the cancellations within a tableau column. If both values occur in one box, the occurrence of $r+1$ is extra and its minus lies one lattice column to the right of the plus. Conversely, a pair eligible for the first lattice pairing has equal tableau contents $j-i$. Distinct boxes with equal content are strictly northwest and southeast of one another. The skew diagram contains the rectangle between these boxes. If the occurrence of $r$ is northwest, the row and column inequalities force the minimum of the other box to be at least $r+1$, contrary to the assumption that its $r+1$ is extra. If the occurrence of $r$ is southeast, those inequalities force it to exceed $r+1$. The first lattice pairing removes precisely the pairs in a single box. Occurrences of a fixed value lie in strictly increasing lattice columns as their tableau columns increase: for $j<j'$, semistandardness forces $i\ge i'$, and $(j'-i')-(j-i)>0$. In particular, the pairs just described do not compete for a sign.

If $r$ and $r+1$ lie in distinct boxes of one tableau column, column strictness forces the boxes to be $(i,j)$ and $(i+1,j)$, respectively. Their lattice columns coincide. Neither sign was removed by the first pairing. In the second pairing, a minus strictly to their right cannot use this plus. At the turn of their minus, the plus is available and is the nearest one weakly to its right. All such pairs can be removed in advance without changing any other pairing.

After these cancellations, each remaining tableau column contributes at most one sign. Consider two surviving occurrences in boxes $(i,j)$ and $(i',j')$ with $j<j'$. It remains to prove $i\ge i'$. Otherwise, the rectangle between the boxes lies in the skew diagram. Since both entries belong to $\{r,r+1\}$, semistandardness forces the first occurrence to be $r$, the second to be $r+1$, and $i'=i+1$. The row and column inequalities force $T_{i,j'}=\{r\}$. Tableau column $j'$ contains both values, contrary to the survival of its sign. Thus $i\ge i'$.

Their lattice columns are strictly increasing. If the two occurrences have the same value, their difference is
\[
(j'-j)+(i-i')>0.
\]
If the first occurrence is $r$ and the second is $r+1$, the difference is
\[
(j'-j)+(i-i')+1>0.
\]
If the first occurrence is $r+1$ and the second is $r$, the difference is
\[
(j'-j)+(i-i')-1\ge0.
\]
Equality would require $j'=j+1$ and $i'=i$, which violates the row inequality. Hence the difference is positive in every case. All surviving signs, including opposite signs, occur in the same strict order in the tableau and lattice. The remaining pairing rules coincide, as do the unpaired occurrences and the occurrences selected by $e_r$ and $f_r$.
\end{proof}

\begin{definition}
\label{def:dfv-crystal-operators}
Let $V\in\DFV_n(\lambda/\mu)$, and let
\[
\mu=\lambda^{(0)}\prec\lambda^{(1)}\prec\cdots\prec\lambda^{(n)}=\lambda,
\qquad M,
\]
be its marked Gelfand--Tsetlin pattern under Proposition~\ref{prop:dfv-mgt}.

If the reduced $r$-signature has no unpaired minus, set $e_r(V)=0$. Otherwise select the leftmost unpaired minus on line $r+1$. If the selected vertex is a nontrivial bump corresponding to the mark $(i,r+1)$, leave the partition chain unchanged and replace $(i,r+1)$ in $M$ by $(i,r)$. If the selected vertex is of type $b_2$, let $i$ be the row of the corresponding box of $\lambda^{(r+1)}/\lambda^{(r)}$. Increase the $i$th part of $\lambda^{(r)}$ by one and leave every other partition and every mark unchanged.

If the reduced $r$-signature has no unpaired plus, set $f_r(V)=0$. Otherwise select the rightmost unpaired plus on line $r$. Suppose first that the selected vertex is a nontrivial bump corresponding to the mark $(i,r)$. If $(i,r+1)\in M$, decrease the $i$th part of $\lambda^{(r)}$ by one and leave every other partition and every mark unchanged. If $(i,r+1)\notin M$, leave the partition chain unchanged and replace $(i,r)$ in $M$ by $(i,r+1)$. If the selected vertex is of type $b_2$, let $i$ be the row of the corresponding box of $\lambda^{(r)}/\lambda^{(r-1)}$. Decrease the $i$th part of $\lambda^{(r)}$ by one and leave every other partition and every mark unchanged.
\end{definition}

Proposition~\ref{prop:raising-closure} proves closure under $e_r$, and Theorem~\ref{thm:crystal-intertwining} establishes closure under $f_r$ and the crystal intertwining.

\begin{example}
\label{ex:signature-running}
For the tableau in Example~\ref{ex:running} with $r=1$, tableau column $1$ contributes $+$ and column $5$ contributes $-$. Columns $2,3,4$ contain both values and contribute no sign. The occurrences of $1$ correspond to lattice columns $1,3,5,6$, and the occurrences of $2$ correspond to columns $3,5,7,8$. The pair in columns $6,7$ is removed by the first lattice pairing rule. The pairs in columns $3$ and $5$ are removed by the second. Figure~\ref{fig:signature-example} displays the surviving signs.

\begin{figure}[t]
\centering
\begin{tikzpicture}[x=0.92cm,y=0.82cm,>=Stealth]
\node[anchor=west,font=\normalsize] at (-0.35,4.25) {(a) Tableau};
\foreach \x/\y/\t in {3/3/1,4/3/12,5/3/2,2/2/1,3/2/23,4/2/3,1/1/1,2/1/23,3/1/4}{
  \draw[black,line width=0.45pt] (\x-0.5,\y-0.5) rectangle (\x+0.5,\y+0.5);
  \node at (\x,\y) {$\t$};
}
\draw[mutedblue,line width=1.0pt] (0.5,0.5) rectangle (1.5,1.5);
\draw[mutedred,line width=1.0pt] (4.5,2.5) rectangle (5.5,3.5);
\node[font=\small] at (9.0,4.10) {$1$-signature of the tableau};
\draw[black,line width=0.45pt] (6.65,3.20)--(11.35,3.20);
\foreach \x/\lab in {7/1,8/2,9/3,10/4,11/5}{\node at (\x,3.55) {$\lab$};}
\node at (7,2.72) {$+$}; \node at (11,2.72) {$-$};
\node[font=\small] at (9,2.05) {reduced signature};
\node at (8.30,1.45) {$+$}; \node at (9.70,1.45) {$-$};
\node[below,text=mutedblue] at (8.30,1.25) {\small $f_1$};
\node[below,text=mutedred] at (9.70,1.25) {\small $e_1$};
\begin{scope}[shift={(0,-3.15)}]
\node[anchor=west,font=\small] at (-0.20,1.85) {(b) Lattice positions corresponding to the $1$-signature};
\foreach \q in {1,...,11}{\draw[faintgray] (\q,-0.35)--(\q,1.30); \node[below] at (\q,-0.35) {\scriptsize \q};}
\draw[faintgray] (0.6,0)--(11.4,0); \draw[faintgray] (0.6,1)--(11.4,1);
\node[left] at (0.32,0) {\small $k=1$}; \node[left] at (0.32,1) {\small $k=2$};
\foreach \q in {1,3,5,6}{\fill[white] (\q,0) circle (0.205); \draw[black,line width=0.45pt] (\q,0) circle (0.20); \node at (\q,0) {$+$};}
\foreach \q in {3,5,7,8}{\fill[white] (\q,1) circle (0.205); \draw[black,line width=0.45pt] (\q,1) circle (0.20); \node at (\q,1) {$-$};}
\draw[black!55,line width=0.6pt] (3,0.21)--(3,0.79);
\draw[black!55,line width=0.6pt] (5,0.21)--(5,0.79);
\draw[black!55,line width=0.6pt] (6.15,0.17)--(6.85,0.83);
\draw[mutedblue,line width=1.0pt] (0.73,-0.27) rectangle (1.27,0.27);
\draw[mutedred,line width=1.0pt] (7.73,0.73) rectangle (8.27,1.27);
\node[text=mutedblue,font=\small] at (1,-1.15) {$f_1$};
\node[text=mutedred,font=\small] at (8,-1.15) {$e_1$};
\end{scope}
\end{tikzpicture}
\caption{The tableau and lattice $1$-signatures in Example~\ref{ex:signature-running}. The highlighted positions are selected by $f_1$ and $e_1$.}
\label{fig:signature-example}
\end{figure}
\end{example}

\begin{lemma}
\label{lem:left-box-raising}
Suppose that $e_r(T)\ne0$ selects the occurrence of $r+1$ in $(i,j)$. If $(i,j-1)\in\lambda/\mu$ and $r+1\in T_{i,j-1}$,
\[
r\in T_{i,j-1}.
\]
\end{lemma}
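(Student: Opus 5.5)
We argue by contradiction: assume $(i,j-1)\in\lambda/\mu$, $r+1\in T_{i,j-1}$, and $r\notin T_{i,j-1}$. The idea is to show that tableau column $j-1$ then contributes a minus which survives reduction. Since it lies strictly to the left of column $j$, it would be a leftmost unpaired minus to the left of the one selected by $e_r$, which contradicts the rule in Definition~\ref{def:svt-crystal}.

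\emph{Step 1: column $j-1$ contributes a minus.} Column $j-1$ contains $r+1$ in the box $(i,j-1)$. We must check that it contains no $r$. By assumption $r\notin T_{i,j-1}$. Any box below $(i,j-1)$ in that column has minimum greater than $T_{i,j-1}^{\max}\ge r+1$, so it cannot contain $r$. Now take a box $(i',j-1)$ with $i'<i$ that contains $r$. Column strictness gives $r<T_{i'+1,j-1}^{\min}\le\cdots\le T_{i,j-1}^{\min}$. Since $r\notin T_{i,j-1}$ and every entry of $T_{i,j-1}$ is at most $T_{i,j-1}^{\max}$, we get $T_{i,j-1}^{\min}=r+1$ and $i'=i-1$. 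So the only possibility is that $r$ sits in $(i-1,j-1)$ and $T_{i,j-1}^{\min}=r+1$.

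To exclude this, look at the box $(i-1,j)$. It lies in the skew diagram, since $(i-1,j-1)$ and $(i,j)$ both do. Its entries satisfy
\[
r=T_{i-1,j-1}^{\min}\le\cdots\le T_{i-1,j-1}^{\max}\le T_{i-1,j}^{\min}\le T_{i-1,j}^{\max}<T_{i,j}^{\min}\le r+1 .
\]
Hence $T_{i-1,j}=\{r\}$. Then column $j$ contains both $r$ and $r+1$, so it contributes no sign, which is impossible because $e_r$ selects an occurrence in column $j$. Therefore column $j-1$ contributes a minus.

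\emph{Step 2: this minus is unpaired.} Write $-_{j-1}$ for the minus of column $j-1$ and $-_j$ for the selected minus of column $j$. The signs of columns $j-1$ and $j$ are adjacent in the column sequence, because no column lies strictly between them. In the pairing procedure a minus is paired with the nearest unpaired plus to its right. The nearest sign to the right of $-_{j-1}$ is $-_j$. Since $-_j$ is unpaired, the reduced signature has the form $+\cdots+-\cdots-$, so every unpaired plus lies to the left of $-_j$. Any plus to the right of $-_j$ is therefore paired with some minus at or beyond column $j$ before $-_{j-1}$ is processed, since the procedure runs from right to left. If such a plus were still available when $-_{j-1}$ is processed, then $-_j$ would have taken it, because it would be the nearest unpaired plus to the right of $-_j$ at that stage. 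This contradicts the fact that $-_j$ is unpaired.

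Hence $-_{j-1}$ is unpaired and lies to the left of the leftmost unpaired minus, which is the required contradiction. The main obstacle is Step 1: one has to rule out an $r$ above $(i,j-1)$ in column $j-1$, and this works only through the square argument at $(i-1,j)$ given above. Step 2 only uses the standard fact that an unpaired minus blocks pairings of the minuses to its left.
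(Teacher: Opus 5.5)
Your proof is correct and follows the paper's argument. Like the paper, you show that an occurrence of $r$ in column $j-1$ would have to lie in $(i-1,j-1)$, which forces $T_{i-1,j}=\{r\}$, and that otherwise the minus of column $j-1$ survives because the unpaired minus of column $j$ blocks every plus to its right; you only present these two parts in the opposite order. Two small fixes are needed, neither affecting the conclusion: in the display you should start from $r\le T_{i-1,j-1}^{\max}$ rather than $r=T_{i-1,j-1}^{\min}$, which need not hold, and the inequalities down column $j-1$ must be written as strict to force $i'=i-1$.
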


\begin{proof}
Assume that $r\notin T_{i,j-1}$. If tableau column $j-1$ contains no occurrence of $r$, it contributes a minus to the $r$-signature. The unpaired minus in column $j$ has no available plus to its right at its turn in the pairing. No plus lies between columns $j-1$ and $j$, and processing the minus in column $j$ frees no plus. The minus in column $j-1$ is also unpaired, contrary to the selection of the leftmost unpaired minus in column $j$. Thus column $j-1$ contains an occurrence of $r$.

Such an occurrence lies immediately above $(i,j-1)$. An occurrence below is excluded by column strictness, and an occurrence at least two rows above forces the minimum in $(i,j-1)$ to be at least $r+2$, contradicting $r+1\in T_{i,j-1}$. The box immediately above $(i,j-1)$ has maximum $r$. The box immediately above $(i,j)$ also belongs to $\lambda/\mu$, since $(i-1,j-1)$ and $(i,j)$ do. The row and column inequalities force this box to be $\{r\}$. Tableau column $j$ contains both $r$ and $r+1$ and contributes no minus, contrary to the selection by $e_r$. Thus $r\in T_{i,j-1}$.
\end{proof}

\begin{example}
\label{ex:three-crystal-cases}
Figure~\ref{fig:three-crystal-cases} displays the three raising configurations and the lowering transfer from a selected extra entry.
\begin{figure}[H]
\centering
\footnotesize
\begingroup
\ytableausetup{mathmode,boxsize=1.55em,centertableaux}
\begin{tabular}{@{}>{\centering\arraybackslash}p{0.22\textwidth}@{\hspace{1.5mm}}>{\centering\arraybackslash}p{0.23\textwidth}@{\hspace{1.5mm}}>{\centering\arraybackslash}p{0.22\textwidth}@{\hspace{1.5mm}}>{\centering\arraybackslash}p{0.25\textwidth}@{}}
\textbf{Minimum entry} & \textbf{Extra entry} & \textbf{Raising transfer} & \textbf{Lowering transfer}\\[2mm]
$\begin{ytableau}1&2\end{ytableau}$
$\xrightarrow{\ e_1\ }$
$\begin{ytableau}1&1\end{ytableau}$
&
$\begin{ytableau}1&13\end{ytableau}$
$\xrightarrow{\ e_2\ }$
$\begin{ytableau}1&12\end{ytableau}$
&
$\begin{ytableau}12&2\end{ytableau}$
$\xrightarrow{\ e_1\ }$
$\begin{ytableau}1&12\end{ytableau}$
&
$\begin{ytableau}12&23\end{ytableau}$
$\xrightarrow{\ f_2\ }$
$\begin{ytableau}123&3\end{ytableau}$
\\[3mm]
\begin{tabular}[t]{@{}c@{}}\vphantom{$\lambda^{(2)}$}path shift\\[-0.2mm]\vphantom{$p_1^{(2)}$}mark unchanged\end{tabular}
& \begin{tabular}[t]{@{}c@{}}\vphantom{$\lambda^{(2)}$}decoration moves\\[-0.2mm]\vphantom{$p_1^{(2)}$}from level $3$ to $2$\end{tabular}
& \begin{tabular}[t]{@{}c@{}}\vphantom{$\lambda^{(2)}$}path shift\\[-0.2mm]\vphantom{$p_1^{(2)}$}mark retained\end{tabular}
& \begin{tabular}[t]{@{}c@{}}$\lambda^{(2)}:(2)\to(1)$\\[-0.2mm]$M$ unchanged, $p_1^{(2)}:4\to3$\end{tabular}
\end{tabular}
\endgroup
\caption{The three raising configurations and the lowering transfer from a selected extra entry. In the fourth panel, the selected nontrivial bump on line $2$ is retained and the nontrivial bump for $(1,3)$ moves one lattice column to the left.}
\label{fig:three-crystal-cases}
\end{figure}
\end{example}

Lemma~\ref{lem:e2-lower-boundary} verifies that the rule for a nontrivial bump preserves the restriction \eqref{eq:mu-boundary-bump}.

\begin{lemma}
\label{lem:e2-lower-boundary}
Suppose that the selected minus is a nontrivial bump corresponding to the mark $(i,r+1)$. The raising rule replaces this mark by $(i,r)$. Let $\lambda^{(0)}\prec\cdots\prec\lambda^{(n)}$ be the associated chain of the marked pattern. One has
\[
\lambda_i^{(r-1)}>\mu_i.
\]
\end{lemma}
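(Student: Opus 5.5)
Translate the statement to the tableau side and work there. Under Proposition~\ref{prop:dfv-svt}, let $T$ correspond to $V$. By Proposition~\ref{prop:signature-compatibility}, the selected minus is the occurrence of $r+1$ chosen by $e_r$ in $T$. Since it is a nontrivial bump for the mark $(i,r+1)$, this occurrence is an extra entry $r+1$ in some box $(i,j)$ of row $i$. Lemma~\ref{lem:extra-entry-position} gives $j=\lambda^{(r)}_i$ and $j>\mu_i$. The chain is the minimum chain $\lambda^{(k)}=\lambda^{(k)}_{\min}(T)$, so $\lambda^{(r-1)}_i$ is the number of boxes of row $i$ in $\mu$ together with those of minimum at most $r-1$. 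Thus $\lambda^{(r-1)}_i>\mu_i$ holds exactly when row $i$ contains a box of $\lambda/\mu$ whose minimum is at most $r-1$. Equivalently, the goal is that the box $(i,j)$ itself, or some box to its left in row $i$ outside $\mu$, has minimum at most $r-1$.

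Since $r+1$ is extra in $(i,j)$, one has $T^{\min}_{i,j}\le r$. If $T^{\min}_{i,j}\le r-1$, we are done. The remaining case is $T^{\min}_{i,j}=r$, so $(i,j)$ contains both $r$ and $r+1$. Then tableau column $j$ contributes no sign, and in the lattice this $r+1$ is a minus one column to the right of a plus from the same box. That minus is removed by the first lattice pairing rule in Definition~\ref{def:lattice-signature}, as shown in the proof of Proposition~\ref{prop:signature-compatibility}. This contradicts the assumption that it is the selected unpaired minus. Hence the case $T^{\min}_{i,j}=r$ does not occur, and $T^{\min}_{i,j}\le r-1$.

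Conclude that $(i,j)\in\lambda^{(r-1)}$ and $(i,j)\notin\mu$, so $\lambda^{(r-1)}_i\ge j>\mu_i$. The one delicate point is ruling out $T^{\min}_{i,j}=r$. Argue it on the lattice side through the first pairing rule rather than through tableau column signs: a column containing both values yields no sign there, but one must check that the lattice selection genuinely matches. Proposition~\ref{prop:signature-compatibility} supplies exactly that correspondence. Note also that no argument is needed that the box lies outside $\mu$, since any box carrying an entry belongs to $\lambda/\mu$.
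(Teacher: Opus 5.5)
Your proof is correct and follows essentially the paper's argument. In both, the only obstruction is that the box $(i,\lambda_i^{(r)})$ carrying the extra $r+1$ has minimum $r$; it would then contain both $r$ and $r+1$, so its occurrence of $r+1$ could not be selected by $e_r$. The paper phrases this by contradiction from $\lambda_i^{(r-1)}=\mu_i$ and invokes the vanishing tableau column sign. You argue directly and rule out the bad case with the first lattice pairing rule, which is equivalent by Proposition~\ref{prop:signature-compatibility}.
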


\begin{proof}
The partition chain is unchanged under this rule. Assume equality. The box carrying the extra entry $r+1$ first appears at level $r$, hence its minimum entry is $r$ and the box contains both $r$ and $r+1$. Its tableau column contributes no sign to the $r$-signature, contradicting the selection of this occurrence by $e_r$.
\end{proof}

\begin{proposition}
\label{prop:raising-closure}
For $V\in\DFV_n(\lambda/\mu)$ and $1\le r<n$, every nonzero application of $e_r$ defined by the rules on marked patterns again belongs to $\DFV_n(\lambda/\mu)$ and has the same lower and upper boundary partitions.
\end{proposition}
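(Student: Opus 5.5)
The plan is to check, one case at a time in Definition~\ref{def:dfv-crystal-operators}, that the modified data again form a marked Gelfand--Tsetlin pattern for $\lambda/\mu$. Proposition~\ref{prop:dfv-mgt} then returns a state in $\DFV_n(\lambda/\mu)$. The boundary claim needs no work. The rules change only $\lambda^{(r)}$ with $1\le r<n$, or only a mark, so $\lambda^{(0)}=\mu$ and $\lambda^{(n)}=\lambda$ are untouched, and condition~(i) of Definition~\ref{def:dfv} holds. What remains in each case is interlacing together with the two inequalities in \eqref{eq:restricted-mark}.

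\emph{Case of a nontrivial bump.} The chain is unchanged, so interlacing holds automatically. The new mark $(i,r)$ must satisfy $\lambda^{(r)}_{i+1}<\lambda^{(r-1)}_i$ and $\lambda^{(r-1)}_i>\mu_i$, and the second of these is exactly Lemma~\ref{lem:e2-lower-boundary}. For the first, I use the tableau. Proposition~\ref{prop:signature-compatibility} lets me read the selected bump as the occurrence of $r+1$ selected by $e_r$ in a box $(i,j)$ with $j=\lambda^{(r)}_i$. Here $r+1$ is extra, and the column of $(i,j)$ carries an unpaired minus, so $r\notin T_{i,j}$. Together with Lemma~\ref{lem:e2-lower-boundary}, which gives $\lambda^{(r-1)}_i=\lambda^{(r)}_i=j$, this says the minimum of $(i,j)$ is below $r$. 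If $\lambda^{(r)}_{i+1}\ge j$ held, the box $(i+1,j)$ would have minimum at most $r$, which contradicts column strictness against the entry $r+1$ above it. So $\lambda^{(r)}_{i+1}<j=\lambda^{(r-1)}_i$. Every other mark keeps its level, and only $(i,r+1)$ was removed, so no other condition changes.

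\emph{Case of a $b_2$ vertex.} The selected occurrence is the minimum entry $r+1$ of the last box $(i,j)$ of row $i$ in $\lambda^{(r+1)}/\lambda^{(r)}$. That is, $j=\lambda^{(r)}_i+1$, and the rule sets $\tilde\lambda^{(r)}_i=j$. For interlacing I need $j\le\lambda^{(r+1)}_i$, which holds by the choice of box, and $j\le\lambda^{(r-1)}_{i-1}$. The second inequality is the key point. If it failed, the box $(i-1,j)$ would lie in $\lambda/\mu$ with minimum exactly $r$, or would lie in $\mu$. In the $\mu$ case it is fine. In the other case the tableau column $j$ contains $r$ and $r+1$, so it contributes no sign, which contradicts selection. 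I also need $\tilde\lambda^{(r)}$ to be a partition and $\tilde\lambda^{(r)}_{i+1}\le\lambda^{(r-1)}_i$. The latter is unchanged unless the row index shifts, and the former follows from $\lambda^{(r)}_{i-1}\ge\lambda^{(r-1)}_{i-1}\ge j$.

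Next come the marks touched by this change: $(i,r)$, $(i,r+1)$, and $(i-1,r+1)$. For $(i,r+1)$, the box position moves from $j-1$ to $j$, and $j>\mu_i$. If an extra $r+1$ were already in $(i,j-1)$, Lemma~\ref{lem:left-box-raising} would force $r$ there as well, and I will check that this matches the transfer picture. The inequality $\lambda^{(r+1)}_{i+1}<j$ follows from column strictness below the box. For $(i-1,r+1)$, I need $\tilde\lambda^{(r+1)}_i<\lambda^{(r)}_{i-1}$, and since $\lambda^{(r+1)}$ is unchanged this condition is not affected. For $(i,r)$, the condition $\tilde\lambda^{(r)}_{i+1}<\lambda^{(r-1)}_i$ is unchanged. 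The mark $(i-1,r)$ involves $\lambda^{(r)}_i$ and is the delicate one. An extra $r$ in row $i-1$ sits in the box $(i-1,\lambda^{(r-1)}_{i-1})$, and I must show it stays strictly to the right of $j$. If instead $\lambda^{(r-1)}_{i-1}=j$, then the box above $(i,j)$ contains $r$, the column has no sign, and again we get a contradiction.

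I expect the main obstacle to be this bookkeeping in the $b_2$ case, where the mark conditions at levels $r$ and $r+1$ in the adjacent rows must all be rechecked. The cleanest route is to reduce each potential violation to the statement ``tableau column $j$ contains both $r$ and $r+1$'', which contradicts selection through Proposition~\ref{prop:signature-compatibility}.
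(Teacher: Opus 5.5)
Your route differs from the paper's. The paper never checks the axioms of Definition~\ref{def:marked-gt} directly. It locates the selected occurrence in $T$ via Proposition~\ref{prop:signature-compatibility} and shows, case by case (replacement, transfer, extra entry), that the marked-pattern rule produces exactly the marked pattern of the tableau $e_r(T)$. It then obtains membership in $\DFV_n(\lambda/\mu)$ from the known fact $e_r(T)\in\SVT_n(\lambda/\mu)$ \cite{MPPS2020} together with Proposition~\ref{prop:dfv-svt}.

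You instead verify interlacing and \eqref{eq:restricted-mark} by hand, reducing every potential violation to a tableau column that contains both $r$ and $r+1$. Your bookkeeping is right. In the $b_2$ case the only nontrivial conditions are $j\le\lambda^{(r-1)}_{i-1}$ and, for a mark $(i-1,r)$, $j<\lambda^{(r-1)}_{i-1}$, and you dispose of both correctly. The check for $(i,r+1)$ is simpler than you make it, since both of its inequalities have $\lambda^{(r)}_i$ on the larger side.

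What each route buys: yours avoids the external closure theorem for tableau crystal operators, but it proves only closure. The paper's route also yields the raising half of Theorem~\ref{thm:crystal-intertwining}, which the paper later cites from this very proof. With your argument, that half would need a separate proof.

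One load-bearing step is asserted rather than proved. In the $b_2$ case you take the selected box to be $(i,\lambda^{(r)}_i+1)$, which is the first box of row $i$ in $\lambda^{(r+1)}/\lambda^{(r)}$, not the last as you wrote. The rule in Definition~\ref{def:dfv-crystal-operators} only supplies the row $i$. Both of your contradictions, however, need the selected column to be exactly $j=\lambda^{(r)}_i+1$.

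This follows from Lemma~\ref{lem:left-box-raising}, which the paper uses at the same point. If $(i,j-1)\in\lambda/\mu$, the row inequality gives $\max T_{i,j-1}\le r+1$. If that box contains $r+1$, the lemma forces $r$ into it as well. Either way its minimum is at most $r$, so $(i,j-1)\in\lambda^{(r)}$ and $\lambda^{(r)}_i=j-1$. If instead $(i,j-1)\in\mu$ or $j=1$, this is immediate.

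A smaller point concerns the bump case. The equality $\lambda^{(r-1)}_i=j$ is not supplied by Lemma~\ref{lem:e2-lower-boundary}. It follows directly from $r\notin T_{i,j}$ together with $r+1$ being extra, which force $T_{i,j}^{\min}\le r-1$. That fact in turn gives the conclusion of Lemma~\ref{lem:e2-lower-boundary}, rather than the other way round.
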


\begin{proof}
Let $T$ correspond to $V$ under Proposition~\ref{prop:dfv-svt}. Proposition~\ref{prop:signature-compatibility} identifies the selected occurrence $r+1$ in a box $(i,j)$. The tableau crystal operator satisfies $e_r(T)\in\SVT_n(\lambda/\mu)$.

Suppose first that the selected $r+1$ is the minimum of $T_{i,j}$. If $(i,j-1)$ contains $r+1$, Lemma~\ref{lem:left-box-raising} implies $r\in T_{i,j-1}$, which is the transfer condition. Otherwise the transfer condition fails. In both cases only the $i$th part of the level-$r$ partition increases by one and the set of marks is unchanged. In a transfer, the extra $r+1$ moves one box to the right and the mark $(i,r+1)$ remains. These are the two $b_2$ configurations in Definition~\ref{def:dfv-crystal-operators}.

If the selected $r+1$ is extra, the row inequality excludes $r+1$ from the box immediately to the left. The transfer condition fails. Replacement leaves the partition chain fixed and changes the mark $(i,r+1)$ to $(i,r)$. This is the rule for a nontrivial bump in Definition~\ref{def:dfv-crystal-operators}, and Lemma~\ref{lem:e2-lower-boundary} verifies the restriction \eqref{eq:mu-boundary-bump} for the new mark.

For the two $b_2$ configurations, \eqref{eq:particle-position} moves only $p_i^{(r)}$ one column to the right. For the case involving a nontrivial bump, the positions of the vertical arrows are fixed and \eqref{eq:mark-lattice-column} shifts the nontrivial decoration from line $r+1$ to line $r$, one column to the left. In a transfer, the retained nontrivial bump on line $r+1$ moves one column to the right. By the uniqueness in Lemma~\ref{lem:row-correspondence}, these changes produce the state corresponding to $e_r(T)$, which belongs to $\DFV_n(\lambda/\mu)$ by Proposition~\ref{prop:dfv-svt}. Only the intermediate level $r$ can change, and the lower and upper boundary partitions remain $\mu$ and $\lambda$.
\end{proof}

\begin{theorem}
\label{thm:crystal-intertwining}
The bijection in Proposition~\ref{prop:dfv-svt} intertwines the ordinary type~A crystal operators on $\SVT_n(\lambda/\mu)$ with the operators on $\DFV_n(\lambda/\mu)$ defined by the rules on marked patterns. For corresponding $V$ and $T$,
\[
e_r(V)\longleftrightarrow e_r(T),
\qquad
f_r(V)\longleftrightarrow f_r(T).
\]
\end{theorem}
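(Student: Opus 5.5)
The plan is to transport everything through the tableau side, using Proposition~\ref{prop:dfv-svt} and Proposition~\ref{prop:signature-compatibility}. For raising operators, most of the work is already done in Proposition~\ref{prop:raising-closure}. There it was shown that, for $V\leftrightarrow T$, the lattice operator $e_r(V)$ defined by the marked-pattern rules coincides with the state attached to $e_r(T)$. It remains to record that $e_r(V)=0$ exactly when $e_r(T)=0$. This follows because the reduced tableau and lattice $r$-signatures have the same unpaired occurrences. So the raising half of the statement follows directly, and the substantive work is for $f_r$.

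For $f_r$, let $T\leftrightarrow V$ and assume $f_r(T)\neq0$. By Proposition~\ref{prop:signature-compatibility}, the rightmost unpaired plus of $V$ on line $r$ is the vertex of the occurrence of $r$ in the box $(i,j)$ selected by $f_r$ on $T$. I will split into cases according to Lemma~\ref{lem:entry-lattice-coordinate}.

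\textbf{Case 1: the selected $r$ is extra.} The selected vertex is then a nontrivial bump carrying the mark $(i,r)$.
\begin{enumerate}
\item If the box $(i,j+1)$ contains both $r$ and $r+1$, then the $r+1$ there is extra and its mark is $(i,r+1)$. The transfer removes $r$ from $(i,j+1)$ and inserts $r+1$ into $(i,j)$. On the minimum chain this decreases $\lambda^{(r)}_i$ by one. The mark $(i,r+1)$ is kept, but it now sits in box $(i,j)$. This agrees with the rule in Definition~\ref{def:dfv-crystal-operators} when $(i,r+1)\in M$.
\item Conversely, if $(i,r+1)\in M$, then Lemma~\ref{lem:extra-entry-position} places the extra $r+1$ in the box $(i,\lambda^{(r)}_i)$. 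I must check that this box is $(i,j+1)$ and that its minimum is $r$. Both follow from the fact that the selected extra $r$ sits in $(i,\lambda^{(r-1)}_i)$ together with the row inequalities. Once this is established, the transfer condition holds.
\item If $(i,r+1)\notin M$, the transfer condition fails, and replacement turns the mark $(i,r)$ into $(i,r+1)$.
\end{enumerate}

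\textbf{Case 2: the selected $r$ is the minimum (a $b_2$ vertex).} Here I need to show that the transfer and replacement sub-cases both decrease $\lambda^{(r)}_i$ and leave $M$ unchanged.
\begin{enumerate}
\item In a replacement, the box stops having minimum $r$, so $\lambda^{(r)}_i$ drops by one.
\item In a transfer, the box $(i,j+1)$ containing $r,r+1$ loses its $r$ and keeps $r+1$ as an extra entry. Box $(i,j)$ gains $r+1$, also as an extra entry. Box $(i,j+1)$ still has minimum at most $r$? No: its minimum becomes $r+1$. The chain again drops $\lambda^{(r)}_i$, and the mark $(i,r+1)$ survives with its box recomputed by \eqref{eq:mark-box}.
\end{enumerate}
One subtlety here is that the selected $r$ must be the rightmost $r$ in row $i$, so that the $i$th part is the right coordinate to decrease. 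This is a mirror version of Lemma~\ref{lem:left-box-raising}: if $(i,j+1)$ contains $r$, then it also contains $r+1$.

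In every case, $f_r(T)\in\SVT_n(\lambda/\mu)$ by \cite{MPPS2020}, and the modified marked pattern is exactly the pattern of $f_r(T)$ under Proposition~\ref{prop:svt-mgt}. Then Proposition~\ref{prop:dfv-mgt} gives closure of $\DFV_n(\lambda/\mu)$ under $f_r$, including the restriction \eqref{eq:mu-boundary-bump}, with no separate verification needed.

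The main obstacle is the mirror lemma to Lemma~\ref{lem:left-box-raising}. It must show that when the selected minimum $r$ has $r\in T_{i,j+1}$, the box $(i,j+1)$ also contains $r+1$, so that the $b_2$ rule is correct. My first attempt will adapt the earlier argument with the pairing directions reversed. If column $j+1$ contributed a $+$, that $+$ would be unpaired and lie to the right of the selected one, contradicting the choice of the rightmost unpaired plus. So column $j+1$ contains $r+1$, and column strictness places that $r+1$ in $(i,j+1)$ itself rather than in the box below.
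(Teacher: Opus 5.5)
\textbf{Verdict: genuine gap.} Your outline follows the paper's proof. Raising is inherited from Proposition~\ref{prop:raising-closure} together with signature compatibility. Lowering splits according to whether the selected $r$ is a minimum or an extra entry. The crux is the equivalence between the transfer condition and $(i,r+1)\in M$. However, the step carrying the real content is justified incorrectly.

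In Case~1, item~2 (the converse), you assert that $\lambda^{(r)}_i=j+1$ follows from $j=\lambda^{(r-1)}_i$ and the row inequalities. It does not. Take $r=2$ and the one-row tableau with boxes $\{1,2\},\{2\},\{2,3\}$. The column of $(1,1)$ contributes a plus, its extra $2$ lies in $(1,\lambda^{(1)}_1)=(1,1)$, and $(1,3)\in M$. Yet the box of this mark is $(1,3)$, and the transfer condition at $(1,2)$ fails. Such configurations are excluded only because $f_r$ selects the \emph{rightmost unpaired} plus, and here $f_2$ selects column $2$. Your ``mirror lemma'' also fails at its last step. Column strictness does not prevent $T_{i,j+1}=\{r\}$ from sitting above a box $(i+1,j+1)$ containing $r+1$.

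The missing argument is essentially the paper's, and it belongs in Case~1, item~2.
\begin{enumerate}
\item Since column $j$ lacks $r+1$, one has $\lambda^{(r)}_i\ge j+1$. So $(i,j+1)$ has minimum $r$, and $T_{i,j}^{\max}=r$.
\item Suppose column $j+1$ lacked $r+1$. Its plus would then lie to the right of the selected plus and could never be paired, since every minus to its left meets the unpaired plus of column $j$ first. So column $j+1$ contains $r+1$.
\item Suppose this $r+1$ lay below row $i$. It would then be in $(i+1,j+1)$, and $(i+1,j)\in\lambda/\mu$ because $(i,j)\notin\mu$. The bounds $T_{i,j}^{\max}=r$ and $T_{i+1,j+1}^{\min}\le r+1$ force $T_{i+1,j}=\{r+1\}$. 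This contradicts the fact that column $j$ contributes a plus.
\item Hence $r+1\in T_{i,j+1}$. This gives the transfer condition and forces $\lambda^{(r)}_i=j+1$.
\end{enumerate}
Alternatively, since you already invoke \cite{MPPS2020}: if the transfer condition failed, replacement would put $r+1$ in $(i,j)$ next to a box of minimum $r$. This violates the row inequality in $f_r(T)$.

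Your Case~2 does not need a mirror lemma at all. The set of boxes with minimum at most $r$ loses exactly one box of row $i$. Lemma~\ref{lem:min-chain}, applied to $f_r(T)\in\SVT_n(\lambda/\mu)$, shows the result is still a partition, so the $i$th part drops by one.
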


\begin{proof}
Proposition~\ref{prop:signature-compatibility} identifies the selected occurrence and shows that an operator vanishes on one side exactly if it vanishes on the other. The proof of Proposition~\ref{prop:raising-closure} establishes the raising relation by comparing the complete marked patterns.

For lowering, suppose $f_r(T)\ne0$ selects $r$ in $u=(i,j)$. If the selected $r$ is the minimum of $T_u$, the selected vertex is of type $b_2$. In a replacement, the minimum of $u$ changes from $r$ to $r+1$. In a transfer, the box $(i,j+1)$ has minimum $r$, removal of $r$ changes this minimum to $r+1$, and the minimum of $u$ remains $r$. In both cases the $i$th part of the level-$r$ partition decreases by one and every mark is unchanged. This is the $b_2$ lowering rule in Definition~\ref{def:dfv-crystal-operators}.

Suppose that the selected $r$ is extra. The selected vertex is the nontrivial bump corresponding to $(i,r)$. By Lemma~\ref{lem:extra-entry-position},
\[
j=(\lambda_{\min}^{(r-1)}(T))_i.
\]
The transfer condition is equivalent to $(i,r+1)\in M$. A transfer has $r+1$ as an extra entry of $(i,j+1)$ before the move, recorded by the mark $(i,r+1)$. Conversely, assume $(i,r+1)\in M$. Its box is $(i,(\lambda_{\min}^{(r)}(T))_i)$. The selected column contains $r$ but not $r+1$, and $(\lambda_{\min}^{(r)}(T))_i\ge j+1$. If $(\lambda_{\min}^{(r)}(T))_i\ge j+2$, the boxes $(i,j+1)$ and $(i,j+2)$ both have minimum $r$, since $(\lambda_{\min}^{(r-1)}(T))_i=j$. The row inequality forces $T_{i,j+1}=\{r\}$. If its column contributed a plus, any minus pairing it would first pair with the unpaired plus in column $j$. The column must contain $r+1$ below row $i$. The box $(i+1,j)$ belongs to $\lambda/\mu$, because $(i+1,j+1)\in\lambda/\mu$ and $(i,j)\notin\mu$. Column strictness and the row inequality force $T_{i+1,j}=\{r+1\}$, contradicting the absence of $r+1$ from the selected column. This contradiction forces $(\lambda_{\min}^{(r)}(T))_i=j+1$, and $(i,j+1)$ contains both $r$ and $r+1$. The transfer condition holds.

If $(i,r+1)\in M$, the transfer decreases the $i$th part of the level-$r$ partition by one and leaves $M$ unchanged. If $(i,r+1)\notin M$, no transfer occurs. Replacing the extra entry $r$ by $r+1$ leaves the partition chain unchanged and replaces the mark $(i,r)$ by $(i,r+1)$. These are exactly the two rules for a selected nontrivial bump in Definition~\ref{def:dfv-crystal-operators}.

The chain and marks obtained above are the marked Gelfand--Tsetlin pattern of $f_r(T)$. They are admissible by Proposition~\ref{prop:svt-mgt}. Proposition~\ref{prop:dfv-mgt} identifies a state in $\DFV_n(\lambda/\mu)$. This proves the lowering relation and closure under $f_r$.
\end{proof}

By Lemma~\ref{lem:crystal-excess}, Proposition~\ref{prop:dfv-svt}, and Theorem~\ref{thm:crystal-intertwining}, the number of nontrivial bumps is constant along every nonzero crystal edge.

\subsection{The involutions on marked patterns and decorated states}
\label{subsec:iota-dfv}

Through the bijections in Propositions~\ref{prop:svt-mgt} and
\ref{prop:dfv-svt}, the tableau involutions $\iota$ and $\iota^*$ act on marked
Gelfand--Tsetlin patterns and decorated states. The same symbols denote these induced maps.

For $T\ne T_0$, write $u=u_\iota(T)=(i,j)$ and
$k_0=i-\mu'_j$, and set
\begin{equation}
\label{eq:k1}
k_1=\min\bigl(T_u\setminus\{k_0\}\bigr).
\end{equation}
The proof of Lemma~\ref{lem:iota-max-preservation} shows that every entry of
$T_u$ is at least $k_0$. If $k_0\notin T_u$, the minimum is larger than
$k_0$. If $k_0\in T_u$, the discrepancy $T_u\ne\{k_0\}$ leaves another
entry larger than $k_0$. Hence the set in \eqref{eq:k1} is nonempty and
$k_0<k_1\le n$.

\begin{proposition}
\label{prop:iota-mgt}
Assume that $k_0\notin T_u$. One has
\begin{equation}
\label{eq:iota-growth-change}
\bigl(\lambda_{\min}^{(k)}(\iota(T))\bigr)_i
=
\begin{cases}
\bigl(\lambda_{\min}^{(k)}(T)\bigr)_i+1,&k_0\le k<k_1,\\
\bigl(\lambda_{\min}^{(k)}(T)\bigr)_i,&\text{otherwise}.
\end{cases}
\end{equation}
Every other part of every partition is unchanged. The mark $(i,k_1)$
is added, and every other mark is unchanged. If $k_0\in T_u$, the
involution reverses these changes.
\end{proposition}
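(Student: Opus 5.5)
The plan is to compute the marked Gelfand--Tsetlin pattern of $\iota(T)$ directly from Proposition~\ref{prop:svt-mgt}. Under the assumption $k_0\notin T_u$, the involution only inserts $k_0$ into the box $u=(i,j)$. The proof of Lemma~\ref{lem:iota-max-preservation} gives $T_u^{\min}>k_0$, and by \eqref{eq:k1} we have $T_u^{\min}=k_1$. Hence $(\iota(T))_u^{\min}=k_0$, the old minimum $k_1$ becomes an extra entry, and every other extra entry of $T_u$ remains extra. All boxes other than $u$ keep their contents.

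First I would verify \eqref{eq:iota-growth-change}. By definition, $\lambda_{\min}^{(k)}(\iota(T))$ differs from $\lambda_{\min}^{(k)}(T)$ only in whether $u$ is included. The box $u$ belongs to the new partition exactly when $k\ge k_0$, and to the old one exactly when $k\ge k_1$. So $u$ is added precisely for $k_0\le k<k_1$. Both sets are partitions by Lemma~\ref{lem:min-chain}, so for these $k$ the box $u$ must be the last box of row $i$ in the new partition. This gives $(\lambda_{\min}^{(k)}(T))_i=j-1$ for such $k$, and the $i$th part increases by one. No other part changes.

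Next I would check the marks. The marks record pairs (row, extra value). Extra entries outside $u$ are unchanged. In $u$, the extra values of $\iota(T)$ are $(T_u\setminus\{k_1\})\cup\{k_1\}$ minus the new minimum $k_0$, i.e.\ the old extra values together with $k_1$. The main point to confirm is that $(i,k_1)$ was not already a mark, i.e.\ that $k_1$ did not already occur as an extra entry in another box of row $i$. This follows because an occurrence of a value as an extra entry in row $i$ sits in the box $(i,(\lambda^{(k_1-1)}_{\min})_i)$ by Lemma~\ref{lem:extra-entry-position}. For $T$, that box is $(i,j-1)$ when $k_1-1\ge k_0$; by the computation above $(\lambda_{\min}^{(k_1-1)}(T))_i=j-1$. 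But $(i,j-1)$, if it lies in $\lambda/\mu$, precedes $u$ and is a singleton, so it carries no extra entries. Hence $(i,k_1)$ is a genuinely new mark, and every other mark is unchanged. This bookkeeping step is the one I expect to require the most care, though it reduces to the singleton property of boxes before the first discrepancy.

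Finally, for the case $k_0\in T_u$, I would apply the above to $\iota(T)$. Proposition~\ref{prop:iota-coordinate} shows that $\iota(T)$ has the same first discrepancy $u$ and satisfies $k_0\notin(\iota(T))_u$, with the same $k_1$, since $T_u\setminus\{k_0\}=(\iota(T))_u$. Because $\iota^2=\mathrm{id}$, the changes from $\iota(T)$ to $T$ are exactly those just computed, and the changes from $T$ to $\iota(T)$ are their reversal.
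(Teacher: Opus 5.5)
Your proposal is correct and follows the same route as the paper: compare the minimum entries of $T$ and $\iota(T)$, note that only $u$ changes its minimum (from $k_1$ to $k_0$), read off the change in the chain and the new mark $(i,k_1)$, and obtain the case $k_0\in T_u$ by reversal. You also supply details that the paper leaves implicit, and all of them are valid: that $u$ is the last box of row $i$, so the $i$th part grows by exactly one; that $(i,k_1)$ was not already a mark of $T$; and that the second case reduces to the first through $\iota^2=\mathrm{id}$ with the same $u$ and the same $k_1$.
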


\begin{proof}
Under the assumption $k_0\notin T_u$, the selected box has minimum $k_1$ in
$T$ and minimum $k_0$ in $\iota(T)$. It belongs to
$\lambda_{\min}^{(k)}(\iota(T))$ and does not belong to
$\lambda_{\min}^{(k)}(T)$ for $k_0\le k<k_1$. No other minimum
changes. The occurrence $k_1$ is a minimum entry in $T$ and an extra
entry in $\iota(T)$, producing the mark $(i,k_1)$. Removal reverses
these changes.
\end{proof}

For $k_0\notin T_u$, equations~\eqref{eq:particle-position} and
\eqref{eq:iota-growth-change} move the $i$th particle one lattice
column to the right for $k_0\le k<k_1$. The associated path segment
is displaced by one column. A $b_2$ vertex appears on line $k_0$, and
the $b_2$ vertex recording the old minimum $k_1$ becomes a
nontrivial bump on line $k_1$. Removal reverses the displacement.
This path modification represents the Boolean coordinate change in
Proposition~\ref{prop:iota-coordinate}. It is distinct from the local crystal transformations in Subsection~\ref{subsec:dfv-crystal}.

\begin{example}
\label{ex:iota-dfv}
For the tableau in Example~\ref{ex:running}, the first discrepancy is
$(3,2)$, with $k_0=2$ and $k_1=3$. This is the reverse direction of the
change displayed in Proposition~\ref{prop:iota-mgt}: the involution removes
$2$ from that box. Only level $2$ changes in the sequence of partitions,
from $(5,3,2)$ to $(5,3,1)$, and the mark $(3,3)$ disappears. On the
lattice, the associated path segment shifts one column to the left on
line $2$, and the nontrivial bump on line $3$ becomes a $b_2$ vertex.
\end{example}

\begin{proposition}
\label{prop:iota-star-mgt}
Let $T\ne T_0^*$, let $u=u_{\iota^*}(T)=(i,j)$, and write
$d^*=d^*(u)$. The marked Gelfand--Tsetlin patterns of $T$ and
$\iota^*(T)$ have the same interlacing chain. If $d^*\notin T_u$, the
mark $(i,d^*)$ is added. If $d^*\in T_u$, this mark is removed. Every
other mark is unchanged.

Under the correspondence with decorated states, all occupied edges are unchanged.
Adding the mark changes the bump associated with $(i,d^*)$ from trivial to nontrivial.
Removing the mark changes it from nontrivial to trivial.
\end{proposition}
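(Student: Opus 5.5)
By Proposition~\ref{prop:iota-star}, $\iota^*$ changes only the box $u$, where it inserts or removes $d^*$, and it preserves every minimum entry. I would deduce everything from this, using the bijections of Propositions~\ref{prop:svt-mgt} and~\ref{prop:dfv-mgt}.

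The first step is the chain. The interlacing chain of a tableau is $\lambda_{\min}^{(k)}(T)$, and this depends only on the minimum tableau. Since $(\iota^*(T))_v^{\min}=T_v^{\min}$ for every box $v$, the two chains coincide.

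The second step is the marks. Marks record pairs $(i,k)$ with $k$ an extra entry in row $i$. Only the box $u=(i,j)$ changes, and it changes only in whether it contains $d^*$. I need $d^*\neq T_u^{\min}$, so that $d^*$ is an extra entry when it is present. Minima are preserved, and $T_u\neq(T_0^*)_u=\{d^*\}$. If $d^*$ were the minimum of $T_u$ and were removed, the minimum would change, which is impossible. If $d^*$ is inserted, the minimum is again unchanged, so $d^*>T_u^{\min}$. Hence in both directions $d^*$ is an extra entry of $u$ whenever it occurs there.

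Next I must check that the extra entry $d^*$ in row $i$ gives the mark $(i,d^*)$ and no other. By Lemma~\ref{lem:extra-entry-position}, the box carrying an extra entry $k$ in row $i$ is determined by the chain as $(i,\lambda_i^{(k-1)})$. Hence two boxes in the same row cannot both carry the extra value $d^*$. So inserting $d^*$ adds exactly the mark $(i,d^*)$, which was absent before, and removing it deletes exactly that mark. Every other mark is unchanged, because all other extra entries are untouched.

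For the decorated states, I use Lemma~\ref{lem:row-correspondence}: the occupied edges are determined by the particle positions $p_i^{(k)}$. These are computed from the chain alone by \eqref{eq:particle-position}. Since the chain is unchanged, every occupied edge is unchanged.

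By Proposition~\ref{prop:dfv-mgt}, the nontrivial bumps are exactly the $a_2$ vertices indexed by the marks. The mark $(i,d^*)$ is admissible in whichever of $T$ and $\iota^*(T)$ contains it. By Lemma~\ref{lem:mark-vertex}, its vertex on line $d^*$ at column $p_i^{(d^*-1)}$ is therefore an $a_2$ vertex, and this vertex is common to both states because the edges agree. Adding the mark turns this bump from trivial to nontrivial, and removing it reverses the change.

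I expect the main subtlety to be the minimum-preservation point, namely that $d^*$ is never the minimum of $T_u$. This is settled by Proposition~\ref{prop:iota-star} together with the rotation identities \eqref{eq:rho-min}--\eqref{eq:rho-max}.
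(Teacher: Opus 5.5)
Your proposal is correct and follows essentially the same route as the paper. Minimum preservation from Proposition~\ref{prop:iota-star} fixes the interlacing chain, and hence every occupied edge. The toggled occurrence of $d^*$ is an extra entry, so Proposition~\ref{prop:svt-mgt} toggles exactly the mark $(i,d^*)$ and its bump decoration. Your explicit checks that $d^*\ne T_u^{\min}$ and that Lemma~\ref{lem:extra-entry-position} allows at most one box per row to carry the extra value $d^*$ spell out details the paper leaves implicit.
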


\begin{proof}
Proposition~\ref{prop:iota-star} shows that $\iota^*$ preserves every minimum entry, hence the interlacing chain is unchanged. Only the occurrence of $d^*$ in the box $u$ is inserted or removed, with the minimum of $u$ fixed, and every such occurrence is extra. Proposition~\ref{prop:svt-mgt} adds or removes precisely the mark $(i,d^*)$.

By Proposition~\ref{prop:dfv-mgt}, the positions of the vertical arrows depend only on the interlacing chain and are unchanged. The presence of the mark makes the associated bump nontrivial, and its absence makes the same bump trivial. This proves the stated change of decoration.
\end{proof}

Let $T$ correspond to $V$. If $T\ne T_0$ and $k_0$ is determined by its
first discrepancy, Theorem~\ref{thm:crystal-intertwining} and
Theorem~\ref{thm:raising-away} imply, for $r\ne k_0$,
\[
e_r\iota(V)=\iota e_r(V).
\]
Proposition~\ref{prop:lowering-generic} states the lowering criterion, and Theorem~\ref{thm:exceptional}
treats the exceptional raising color on decorated states.

If $T\ne T_0^*$, let $d^*=d^*(u_{\iota^*}(T))$. By
Theorem~\ref{thm:crystal-intertwining} and Corollary~\ref{cor:iota-star-lowering},
for $r\ne d^*-1$,
\[
f_r\iota^*(V)=\iota^*f_r(V).
\]
Corollary~\ref{cor:iota-star-raising} states the raising criterion, with the
selected box read in reverse column order. Corollary~\ref{cor:iota-star-exceptional}
treats the exceptional lowering color $d^*-1$.

In the exceptional commuting case of Example~\ref{ex:exceptional-square},
the raising crystal edge before applying $\iota$ is a replacement, and the edge
after applying $\iota$ is a transfer. This distinction from
Section~\ref{sec:iota-crystal} appears in the decorated model.
For $\iota^*$, the exceptional lowering crystal edges on the two sides of the
commutation relation in Corollary~\ref{cor:iota-star-exceptional} are the
corresponding rotated cases. The model on decorated states records the exceptional
phenomena described above together with their duals under rotation.

\FloatBarrier

\section{Conclusion}
\label{sec:conclusion}

Fibers determined by maximum entries provide Boolean coordinates for the sign-reversing involution, and Section~\ref{sec:iota-crystal} determines its interaction with the ordinary type~A crystal operators. The generic raising colors commute with $\iota$, the generic lowering colors are characterized by the position selected by the reduced signature, and Theorem~\ref{thm:exceptional} treats the exceptional raising color. Rotation produces the dual statements for $\iota^*$. The lowering colors involving the changed entry, and the dual raising colors obtained from them, are not characterized in this paper.

Marked Gelfand--Tsetlin patterns and restricted decorated five-vertex states realize the tableau structures for skew shapes. The inner partition enters through the lower boundary and through the additional restriction on nontrivial bumps. Two questions remain. One is whether the untreated crystal colors admit criteria comparable to Theorem~\ref{thm:exceptional}. The other is whether the restriction on nontrivial bumps for a skew shape can be encoded by a local vertex construction that retains the tableau and crystal correspondences. Related questions may also be considered for shifted set-valued tableaux and $K$-theoretic Schur $P$- and $Q$-functions \cite{IkedaNaruse2013,LewisMarberg2021,MarbergTong2025,NobukawaShimazaki}.

\section*{Acknowledgments}
This work was supported by JSPS KAKENHI Grant Number JP26K24505.
The author acknowledges the use of ChatGPT to assist with mathematical arguments and language editing.

\end{document}